\documentclass[12pt]{amsart}

\title{Random surfaces with large systoles}

\author[M. Liu]{Mingkun Liu}
\address[Mingkun Liu]{Department of Mathematics, FSTM, University of Luxembourg, L-4364 Esch-sur-Alzette, Luxembourg}
\email{mingkun.liu@uni.lu}

\author[B. Petri]{Bram Petri}
\address[Bram Petri]{Institut de Math\'ematiques de Jussieu--Paris Rive Gauche and  Institut universitaire de France ; Sorbonne Universit\'e and Universit\'e Paris Cit\'e, CNRS, IMJ-PRG, F-75005 Paris, France}
\email{bram.petri@imj-prg.fr}

\date{\today}

\usepackage{amsmath,amssymb,amsthm,graphicx,overpic}
\usepackage{float,enumitem}
\usepackage[dvipsnames]{xcolor}
\usepackage{mathtools}
\usepackage[colorinlistoftodos]{todonotes}
\usepackage{multirow}
\usepackage{pgfplots}
\usepackage{subcaption}

\pgfplotsset{compat=1.7}

\usepackage[cal=euler]{mathalfa}

\usepackage[margin=2.7cm]{geometry}  
\usepackage[indent]{parskip}

\numberwithin{equation}{section}

\usepackage[colorlinks=true,linkcolor=red,citecolor=Green]{hyperref}

\newtheorem{thm}{Theorem}[section]

\newtheorem{prp}[thm]{Proposition}
\newtheorem{cor}[thm]{Corollary}
\newtheorem{lem}[thm]{Lemma}

\newenvironment{thmrep}[1]
  {\innerthmrep}
  {\endinnerthmrep}

\newenvironment{correp}[1]
  {\innercorrep}
  {\endinnercorrep}

\theoremstyle{definition}
\newtheorem{dff}[thm]{Definition}

\newtheorem{rem}[thm]{Remark}

\newcommand{\nc}{\newcommand}
\nc{\dmo}{\DeclareMathOperator}
\usepackage{dsfont}

\nc{\abs}[1]{\left| #1 \right|}
\nc{\bigO}[1]{O\left(#1\right)}
\nc{\card}[1]{\left|#1\right|}
\nc{\ceil}[1]{\left\lceil #1 \right\rceil}
\nc{\CC}{\mathbb{C}}
\nc{\dilog}{\mathcal{L}}
\nc{\floor}[1]{\left\lfloor #1 \right\rfloor}
\nc{\ind}{\mathds{1}}
\nc{\ZZ}{\mathbb{Z}}
\nc{\len}[1]{\left| #1 \right|}
\nc{\littleo}[1]{o\left(#1\right)}
\dmo{\Mat}{Mat}
\nc{\NN}{\mathbb{N}}
\nc{\norm}[1]{\left|\left| #1 \right|\right|}
\nc{\QQ}{\mathbb{Q}}
\nc{\RR}{\mathbb{R}}
\nc{\st}[2]{\left\{\, #1 \,:\, #2\,\right\}}
\dmo{\supp}{supp}
\dmo{\tr}{tr}
\nc{\what}{\widehat}
\dmo{\im}{Im}
\nc{\eps}{\varepsilon}
\dmo{\li}{li}
\dmo{\arccosh}{arccosh}

\dmo{\Tr}{Tr}

\newcommand{\bO}{\mathrm{O}}
\newcommand{\lo}{\mathrm{o}}

\dmo{\area}{area}
\dmo{\conv}{conv}
\dmo{\diam}{diam}
\dmo{\DD}{\mathbb{D}}
\dmo{\dist}{\mathrm{d}}
\nc{\HH}{\mathbb{H}}
\dmo{\Isom}{Isom}
\dmo{\MCG}{MCG}
\dmo{\MPL}{MPL}
\dmo{\Mod}{\mathcal{M}}
\dmo{\PL}{PL}
\nc{\Sphere}{\mathbb{S}}
\dmo{\sys}{sys}
\dmo{\kiss}{Kiss}
\dmo{\Teich}{\mathcal{T}}
\nc{\Torus}{\mathbb{T}}
\dmo{\vol}{vol}
\dmo{\WP}{WP}

\dmo{\convTV}{\;\stackrel{\mathrm{TV}}{\longrightarrow}\;}
\nc{\ExV}[2]{\mathbb{E}_{#1}\left[#2\right]}
\dmo{\EE}{\mathbb{E}}
\nc{\Pro}[2]{\mathbb{P}_{#1}\left[#2\right]}
\dmo{\PP}{\mathbb{P}}
\nc{\distTV}[2]{\mathrm{d}_{\rm TV}\left(#1,#2\right)}
\dmo{\UU}{\mathbb{U}}
\nc{\Var}[2]{\mathbb{V}\mathrm{ar}_{#1}\left[#2\right]}

\dmo{\alt}{\mathfrak{A}}
\dmo{\Aut}{Aut}
\dmo{\Fix}{Fix}
\dmo{\GL}{GL}
\dmo{\Hom}{Hom}
\dmo{\id}{Id}
\dmo{\PGL}{PGL}
\dmo{\PSL}{PSL}
\dmo{\PO}{PO}
\dmo{\Rep}{Rep}
\dmo{\SL}{SL}
\dmo{\SO}{SO}
\dmo{\sym}{\mathfrak{S}}
\dmo{\inv}{\mathcal{I}}
\dmo{\orb}{\mathcal{O}}
\dmo{\stab}{Stab}

\nc{\FF}{\mathbb{F}}

\nc{\calA}{\mathcal{A}}
\nc{\calB}{\mathcal{B}}
\nc{\calC}{\mathcal{C}}
\nc{\calD}{\mathcal{D}}
\nc{\calE}{\mathcal{E}}
\nc{\calF}{\mathcal{F}}
\nc{\calG}{\mathcal{G}}
\nc{\calH}{\mathcal{H}}
\nc{\calI}{\mathcal{I}}
\nc{\calJ}{\mathcal{J}}
\nc{\calK}{\mathcal{K}}
\nc{\calL}{\mathcal{L}}
\nc{\calM}{\mathcal{M}}
\nc{\calN}{\mathcal{N}}
\nc{\calO}{\mathcal{O}}
\nc{\calP}{\mathcal{P}}
\nc{\calQ}{\mathcal{Q}}
\nc{\calR}{\mathcal{R}}
\nc{\calS}{\mathcal{S}}
\nc{\calT}{\mathcal{T}}
\nc{\calU}{\mathcal{U}}
\nc{\calV}{\mathcal{V}}
\nc{\calW}{\mathcal{W}}
\nc{\calX}{\mathcal{X}}
\nc{\calY}{\mathcal{Y}}
\nc{\calZ}{\mathcal{Z}}

\nc{\klav}{Klav\v{z}ar}

\nc{\bi}{\mathbf{i}}
\nc{\bj}{\mathbf{j}}
\nc{\bk}{\mathbf{k}}

\nc{\sfA}{\mathsf{A}}
\nc{\sfG}{\mathsf{G}}
\nc{\sfT}{\mathsf{T}}

\begin{document}

\begin{abstract}
We present two constructions, both inspired by ideas from graph theory, of sequences random surfaces of growing area, whose systoles grow logarithmically as a function of their area. This also allows us to prove a new lower bound on the maximal systole of a closed orientable hyperbolic surface of a given genus.
\end{abstract}

\maketitle

\begin{figure}[ht]
    \centering
    \includegraphics[width=0.75\textwidth]{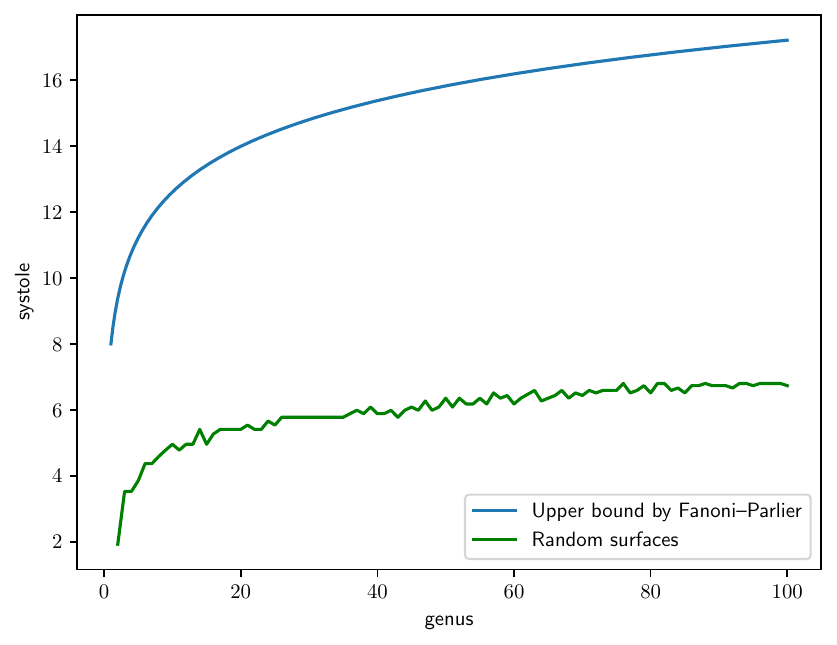}
    \caption{The systole of random Bely\u{\i} surfaces with one cusp, generated with a method inspired by ideas of Linial--Simkin, versus the best known upper bound.}
    \label{fig:examples}
\end{figure}

\section{Introduction}

The systole $\sys(X)$ of hyperbolic surface $X$ is the length of the shortest closed geodesic on $X$. It is known that the function $\sys \colon \calM_{g,n}\to (0,\infty)$, where $\calM_{g,n}$ denotes the moduli space of complete orientable hyperbolic surfaces of genus $g$ with $n$ cusps, admits a maximum \cite{Mumford}. However, what the value of this maximum is and which surfaces realize it, is wide open in general. What is known, is that the maximum is at most logarithmic as a function of $g+n$ \cite{Brooks_injrad,BuserSarnak,KatzSchapsVishne,Schmutz_congruence,FanoniParlier,FBP}. However, even the question of whether the sequence
\[
\frac{\max\st{\sys(X)}{X\in\calM_g}}{\log(g)},\quad g\geq 2
\]
has a limit as $g\to\infty$ is currently open. It is known, due to work by Brooks \cite{Brooks_injrad} and Buser--Sarnak \cite{BuserSarnak} that the limit supremum of this sequence is at least $\frac{4}{3}$. Buser and Sarnak also proved that the limit infimum is positive.  This bound on the limit infimum was recently made effective by Katz--Sabourau \cite{KatzSabourau_newpaper} using a modified version of Buser and Sarnak's construction. Katz and Sabourau obtain $\frac{19}{120}$ as a lower bound. 

The main question in this paper is whether random constructions can provide sequences of surfaces with logarithmic systoles. The interest in doing this, is that the current examples form quite sparse sequences, so we get many more examples and also much more flexible constructions. For instance, one of the consequences of what we do is the following improvement on the result by Katz--Sabourau:
\begin{thm}\label{thm_liminf}
We have
\[
\liminf_{g\to\infty} \frac{\max\st{\sys(X)}{X\in\calM_g}}{\log(g)} \geq \frac{2}{9}.
\]
\end{thm}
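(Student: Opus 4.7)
The plan is to apply the probabilistic method to one of the two random constructions developed in the paper---specifically, the variant that produces closed orientable hyperbolic surfaces of genus $g$ (or with genus concentrated around $g$), which I will denote $\mathbf{X}_g$. The theorem will follow from the single claim that for every $c < 2/9$,
\[
\Pro{}{\sys(\mathbf{X}_g) \geq c \log g} > 0
\]
for all $g$ large enough: any such event deterministically produces some $X \in \calM_g$ with $\sys(X) \geq c \log g$, and letting $c \nearrow 2/9$ yields the stated liminf inequality. If the model's genus is random, I would then verify concentration around the target value and check that conditioning on the exact genus $g$ does not appreciably degrade the systole estimate.

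The proof of the probabilistic claim is a first-moment estimate. Writing
\[
N_{\leq L}(X) = \#\{\text{primitive closed geodesics on } X \text{ of length at most } L\},
\]
I would show that for $L = c\log g$ with $c < 2/9$ one has $\ExV{}{N_{\leq L}(\mathbf{X}_g)} \to 0$ as $g \to \infty$; Markov's inequality then gives $\Pro{}{\sys(\mathbf{X}_g) < L} \to 0$. Bounding the expectation reduces to enumerating combinatorial representatives of short closed geodesics in the discrete data underlying the model (reduced words in a random tuple of permutations, closed walks in a random fat graph, or similar) and, for each such type $T$, estimating the probability that $T$ is geometrically realized in $\mathbf{X}_g$. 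Heuristically, the number of types of combinatorial length $\ell$ grows like $e^{\alpha \ell}$ for an entropy rate $\alpha$ intrinsic to the model, while each realization probability decays like $g^{-\beta}$ for an exponent $\beta$ reflecting the number of constraints the type imposes on the random data; the first moment is then $o(1)$ as soon as $L < (\beta/\alpha)\log g$, so the whole argument amounts to arranging this ratio to be at least $2/9$.

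The main obstacle is the precise constant. The qualitative $o(1)$ statement is not hard to arrange, but extracting $2/9$ requires real care, namely: (i) a tight enumeration of candidate geodesics, handling cyclic equivalence, inverses, and proper powers so as not to overcount; (ii) a sharp evaluation---not a union bound---of the realization probability for each type, exploiting the precise combinatorial structure of the random construction; and (iii) an \emph{a priori} geometric lower bound on the contribution of each building block to the length of a candidate geodesic, ruling out the delicate case of curves whose combinatorial length is large but whose geometric length is short. Taken together, these ingredients produce, for every large $g$, a closed hyperbolic surface of genus $g$ whose systole is at least $(2/9 - o(1)) \log g$, which is the theorem.
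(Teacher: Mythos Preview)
Your proposal has a genuine gap: the first-moment argument you describe does not match either random construction in the paper and would not, as stated, produce any positive constant, let alone $2/9$.

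In the unconditioned random Bely\u{\i} model (a uniform random trivalent fat graph), the expected number of closed geodesics of bounded length converges to a \emph{positive} constant as $n\to\infty$; this is precisely the Poisson limit underlying the bounded-systole results cited in the introduction. Hence $\EE[N_{\le L}]$ does not tend to $0$ even for fixed $L$, and the first-moment bound yields no nontrivial $c$. The construction that actually drives the theorem is not an unconditioned model analysed after the fact, but a \emph{conditioned sequential process}: at each step a pair of boundary sides is glued only if doing so creates no closed geodesic shorter than $c\log n$. Thus $N_{\le c\log n}=0$ deterministically at every stage and a first-moment bound on short geodesics is vacuous. The entire content of the proof is that this constrained process \emph{saturates}---it does not get stuck---with high probability. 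This is shown by proving that after $T=n-\lceil n^{a+c}\rceil$ steps the partial gluing is ``safe'' (every remaining pair of half-edges is still available), after which saturation is deterministic. The constant $2/9$ comes out of the numerical constraints $a+c<1$ and $a>c/2$ needed in that saturation analysis (a path of trace $\tau_0\le n^{c/2}$ can contain up to order $n^{c/2}$ chords, and the threatening-path count must be summed over this range), not from an entropy-versus-probability tradeoff of the kind you sketch.

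Your plan for hitting every genus via concentration is also not what the paper does and would run into the same difficulty: genus in the unconditioned model does concentrate, but the systole does not grow. Instead the paper further modifies the process---forbidding, during the random phase, gluings that create interior cusps or one-sided boundary components---and, once the configuration becomes safe, completes the remaining gluings deterministically so as to minimise the number of cusps. A parity computation then forces exactly one cusp when $n$ is odd, giving genus $(n+1)/2$; Brooks' compactification lemma converts this into a closed surface of that genus with systole $(c+o(1))\log g$. The verification that the modified process still saturates is a line-by-line check that the earlier lemmas survive the extra restrictions on availability.
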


What we really prove is that a certain random construction (defined below) can produce a closed surface $X_g$ of genus $g$ for every $g\geq 2$, whose systole satisfies $\mathrm{sys}(X_g)\geq (\frac{2}{9}+\lo(1)) \cdot \log(g)$. This in turn implies the theorem above.

It is known that in the most studied models, a random surface Benjamini--Schramm converges to the hyperbolic plane but has almost surely bounded systole as the area tends to infinity \cite{BM,Pet1,Mir,MirPet,MageePuder,MageeNaudPuder,PuderZimhoni}.
In other words, for random surfaces of large systole (like $X_g$ above), different models are needed. This paper will present two such models. 

Our first model is a hyperbolic analogue of a theorem by Linial--Simkin \cite{LinialSimkin} on random graphs: a version with large systoles of Brooks--Makover's \cite{BM} random Bely\u{\i} surfaces:
\begin{thm}\label{thm_main_linsim}
Given $n\in\NN$ and $c>0$ define the following random process:

\begin{itemize}
\item Let $X_n^{(0)}$ be a surface with boundary, built out of $2n$ ideal hyperbolic triangles, glued together with zero shear. Moreover, suppose that every triangle is incident to two other triangles and $X_n^{(0)}$ contains no closed geodesics of length less than $c\cdot\log(n)$.
\item for $t=1,\ldots,n$, create $X_n^{(t)}$ as follows. Let $\calA^{(t)}$ denote the set of pairs of sides of triangles on $\partial X_n^{(t-1)}$, that when glued together do not create closed geodesics of length less than $c\cdot\log(n)$. If $\calA^{(t)}\neq \varnothing$, pick a pair of sides in $\calA^{(t)}$ uniformly at random and glue them together with an orientation-reversing isometry of zero shear. If $\calA^{(t)} = \varnothing$, set $X_n^{(t)}=X_n^{(t-1)}$.
\end{itemize}

\noindent If $c<\frac{2}{9}$, then with high probability\footnote{Here and throughout the paper, ``with high probability'' means ``with probability tending to $1$ as the underlying parameter ($n$ in this case) tends to infinity.} as $n\to\infty$, the process above saturates, i.e.\ it produces a random hyperbolic surface $X_n\coloneqq X_n^{(n)}$ of area $2\pi n$ without boundary and with systole 
\[
\sys(X_n) \geq c\cdot \log(\area(X_n)).
\]
\end{thm}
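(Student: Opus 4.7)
The plan is to reduce the theorem to the statement that the process saturates with high probability, i.e., $\calA^{(t)} \neq \varnothing$ for every $t = 1, \ldots, n$. Whenever this happens, $X_n = X_n^{(n)}$ is a closed (cusped) hyperbolic surface obtained by gluing $2n$ ideal triangles, so $\area(X_n) = 2 \pi n$ and by construction every closed geodesic has length at least $c \log n$; absorbing the additive constant $c \log(2\pi)$ by slightly shrinking $c$ within $(0, 2/9)$ turns this into the stated bound $\sys(X_n) \geq c \log \area(X_n)$.

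The first ingredient is a geometric-to-combinatorial length comparison for zero-shear ideal triangulations: any closed geodesic of hyperbolic length at most $c \log n$ crosses at most $k_0 \sim \alpha(c) \log n$ arcs of the triangulation, with an explicit $\alpha(c)$ coming from eigenvalue estimates for the ``flip matrices'' associated to zero-shear gluings (standard, going back to Brooks). Consequently, a pair $(s, s')$ of boundary sides of $X_n^{(t-1)}$ is bad---its gluing produces a short closed geodesic---only if in the dual graph of the current surface $s'$ lies within combinatorial distance $k_0 - 1$ of $s$. Since the dual graph has degree at most $3$, there are at most $O(2^{k_0}) = O(n^{\alpha(c) \log 2})$ such sides, yielding $O(b(t) \cdot n^{\alpha(c) \log 2})$ bad pairs out of $\binom{b(t)}{2}$ total at step $t$, where $b(t) = 2n - 2(t-1)$. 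The constraint $c < 2/9$ is calibrated so that $\alpha(c) \log 2$ is small enough that whenever $b(t) \gg n^{\alpha(c) \log 2}$ the valid pairs vastly outnumber the bad ones; a union bound then handles the ``bulk'' of the process, say for $t$ with $b(t) \geq n^{1-\eps}$.

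The main obstacle---the heart of the proof---is the endgame, when $b(t)$ is comparable to the crude upper bound on the number of bad pairs and the union bound is no longer sufficient. Here I would adapt the strategy of Linial--Simkin: via the differential equation method, or equivalently a Doob-martingale argument, track refined statistics of $X_n^{(t)}$ (e.g.\ the empirical distribution of the $k_0$-balls around boundary sides and the number of atypically dense such balls) and show that these quantities concentrate on a deterministic trajectory along which ``dense'' sides stay rare throughout. A switching-type resampling argument then shows that a positive proportion of all pairs remains in $\calA^{(t)}$ all the way down to $t = n$. The precise threshold $c < 2/9$ emerges in this endgame analysis as the boundary of the parameter regime in which the concentration is strong enough to keep dense sides negligible during the final steps. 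Combining the bulk and endgame estimates yields $\PP[\text{saturation}] \to 1$, which is the theorem.
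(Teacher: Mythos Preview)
Your proposal has a fundamental gap at the very first step, the geometric-to-combinatorial conversion, and this error propagates through everything that follows.

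You claim that a closed geodesic of hyperbolic length at most $c\log n$ crosses at most $k_0\sim\alpha(c)\log n$ arcs of the triangulation. This is false for zero-shear ideal triangulations. A closed curve carrying the word $w$ in $L,R$ has hyperbolic length $2\cosh^{-1}(\tr(w)/2)$, so length $\le c\log n$ corresponds to $\tr(w)\le n^{c/2}(1+\lo(1))$. But words like $L^kR$ have trace $k+2$ while having $k+1$ letters, so a geodesic of length $c\log n$ can cross on the order of $n^{c/2}$ arcs, a \emph{polynomial} number, not a logarithmic one. (The paper flags exactly this: ``a loop of $k$ edges in the graph dual to the triangulation of $X_n$ can be homotopic to a closed geodesic of length $\approx\log(k)$.'') Your $O(2^{k_0})$ ball-size bound is therefore meaningless here; what actually controls the number of ``bad'' half-edges reachable from a given one is the word-count $\card{\{w:3\le\tr(w)\le n^{c/2}\}}=O(n^c\log n)$, which is the content of Proposition~\ref{prp_counting}.

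The second gap is the endgame. You propose to run the differential-equation/nibbling machinery of Linial--Simkin to handle the last $n^{1-\eps}$ steps. The paper explicitly reports that this part of Linial--Simkin does \emph{not} transfer to the hyperbolic setting (the obstruction is that $\tr(w_1w_2)$ is not controlled multiplicatively by $\tr(w_1)$ and $\tr(w_2)$; see the remark after the proof of Theorem~\ref{thm_linsim_rephrased}). The actual argument is quite different: one stops the random phase at time $T=n-\lceil n^{a+c}\rceil$, introduces a notion of a \emph{safe} configuration (all pairs available and every boundary cusp touches at least $\sqrt{\tau_0-2}$ triangles), proves the deterministic lemma that once safe the process saturates with certainty, and then shows by a first-moment computation on ``threatening paths'' that $\calG^{(T+1)}$ is safe with high probability. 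The constant $\tfrac{2}{9}$ does not come from any concentration in the endgame; it arises from the constraints $a>c/2$ (needed because chord-counts can be as large as $n^{c/2}$) and $\tfrac{7c}{2}+2a-1<0$ in that first-moment bound.
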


We produced Figure~\ref{fig:examples} by running a variant of the algorithm from the theorem above. A database of the surfaces we found is attached to the arXiv version of this paper. For more details, see  Section~\ref{sec_examples}. Theorem \ref{thm_liminf} is based on another variant of this construction, see Section \ref{sec_maxsys}.

Our bound on the constant $c$ is worse than that of Linial--Simkin. This is due to the mix of hyperbolic geometry and combinatorics: a loop of $k$ edges in the graph dual to the triangulation of $X_n$ can be homotopic to a closed geodesic of length $\approx\log(k)$. In particular, if we were to directly apply Linial and Simkin's construction, we would only get a lower bound of the order $\log\log(\area)$ on the systoles of our surfaces. By adapting the argument, we can recover logarithmic growth, but with a worse constant. On top of that, because of similar reasons, our current strategy does not allow us to apply the ``nibbling'' procedure of Linial--Simkin. If this were made to work we estimate it might be possible to bring the bound on the constant up to $\frac{1}{3}$. The surfaces $X_n$ have cusps, but, using \cite[Lemma~3.1]{Brooks}, their conformal compactifications $\overline{X}_n$ are closed hyperbolic surfaces whose systole is still larger than  $(c+\lo(1))\cdot \log(\area(X_n))$.

The second construction we investigate is random \emph{regular} covers. That is, we first fix a hyperbolic surface $X=\Gamma\backslash\HH^2$ and a sequence $(G_n)_n$ of finite groups. Then we take $\varphi_n\in \Hom(\Gamma,G_n)$ uniformly at random and define $X_{G_n} = \ker(\varphi_n)\backslash\HH^2$. By construction, the surface $X_{G_n}$ admits a regular cover $X_{G_n}\to X$. If $\varphi_n$ is surjective, which turns out to be generic as $n\to\infty$ for our choices of sequences finite groups (see Section \ref{sec_covers}), then this cover is of degree $\card{G_n}$ and hence the area of $X_{G_n}$ generically satisfies $\area(X_{G_n}) = \card{G_n}\cdot \area(X)$. This model again is an analogue of a construction in graph theory, this time of the random Cayley graphs studied by Gamburd--Hoory--Shahshahani--Shalev--Vir\'ag \cite{GHSSV}. In particular, this is a different model than that of random covering surfaces studied in \cite{MageePuder,MageeNaudPuder}, the random covers studied in these papers are rarely regular.  We obtain:

\begin{thm}\label{thm_main_covers}
Let $X$ be a complete orientable hyperbolic surface of finite area and let $\mathbf{G}$ be a connected, simply connected, simple algebraic group defined over $\QQ$. Then there exists a constant $c>0$ and a prime $p_0$ such that, with high probability as $\card{\FF} \to \infty$,
\[
    \sys(X_{\mathbf{G}(\FF)})
    \geq
    (c+\lo(1)) \cdot \log(\area(X_{\mathbf{G}(\FF)})),
\]
for any sequence of finite fields $\FF$  whose characteristics all exceed $p_0$. 
\end{thm}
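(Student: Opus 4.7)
The strategy is a first-moment argument on the elements of $\pi_1(X)$ that are candidates for short geodesics on the cover. A closed geodesic on $X_{G_n}$ corresponds to a conjugacy class of elements of $\ker(\varphi_n)\setminus\{1\}$, so $\sys(X_{G_n})\geq L$ is equivalent to $\varphi_n(\gamma)\neq 1$ for every $\gamma\in\pi_1(X)\setminus\{1\}$ with translation length $\ell(\gamma)<L$ on $X$. Since $\area(X_{G_n})\leq|G_n|\cdot\area(X)$, it is enough to rule out such ``short'' $\gamma$ when $L=c\log|G_n|$. By the prime geodesic theorem for $X$, the number of conjugacy classes of elements $\gamma\in\pi_1(X)\setminus\{1\}$ with $\ell(\gamma)\leq L$ is $O(e^L/L)$, and since $\{\varphi_n(\gamma)=1\}$ is a conjugation-invariant event, it suffices to take a union bound over these classes.

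The core technical step is to show that, for a fixed $\gamma\in\pi_1(X)\setminus\{1\}$ and every finite field $\mathbb{F}$ of characteristic exceeding a suitable $p_0$,
\[
\mathbb{P}_{\varphi_n}\bigl[\varphi_n(\gamma)=1\bigr]\ \leq\ C\cdot|\mathbb{F}|^{-1}\ \asymp\ |G_n|^{-1/\dim\mathbf{G}}.
\]
I would derive this from the geometry of the $\mathbb{Z}[1/N]$-scheme $\calR=\Hom(\pi_1(X),\mathbf{G})$ together with the evaluation morphism $\mathrm{ev}_\gamma\colon\calR\to\mathbf{G}$, $\varphi\mapsto\varphi(\gamma)$. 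If $\mathrm{ev}_\gamma$ is dominant, then Lang--Weil, applied both to $\calR$ and to the fiber $\mathrm{ev}_\gamma^{-1}(1)$ over $\mathbb{F}$, produces the required bound. Dominance amounts to the existence of a Zariski-dense representation $\rho\colon\pi_1(X)\to\mathbf{G}(\mathbb{C})$ with $\rho(\gamma)\neq 1$: when $X$ has cusps, $\pi_1(X)$ is free and such a $\rho$ is trivial to construct; when $X$ is closed, one starts from the composition of the Fuchsian representation with a principal embedding $\SL_2\hookrightarrow\mathbf{G}$ and deforms it, using Weil rigidity arguments to ensure Zariski-density and non-triviality on $\gamma$.

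Combining the two ingredients gives
\[
\mathbb{P}\bigl[\sys(X_{G_n})<L\bigr]\ \leq\ O\!\left(\tfrac{e^L}{L}\right)\cdot|G_n|^{-1/\dim\mathbf{G}},
\]
which tends to zero when $L=c\log|G_n|$ for any $c<1/\dim\mathbf{G}$. The main obstacle is the second step: the implicit constant in Lang--Weil depends on the degrees of the equations cutting out the fiber, which grow linearly with the word length of $\gamma$, and the elements $\gamma$ we care about may themselves have word length $\sim\log|G_n|$. Handling this requires a quantitative Lang--Weil estimate and a uniform integral model of $\calR$. For closed $X$, a cleaner alternative is likely to compute $\mathbb{P}_{\varphi_n}[\varphi_n(\gamma)=1]$ directly via Frobenius' character formula and invoke Larsen--Shalev-type character bounds for finite simple groups of Lie type, which should give the uniform estimate required by the union bound.
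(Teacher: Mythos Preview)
Your overall architecture---union bound over short conjugacy classes, combined with a Lang--Weil estimate on the fiber $\mathrm{ev}_\gamma^{-1}(1)$---is exactly the paper's strategy for the \emph{closed} case. Two comments on how the paper differs in detail, and one genuine gap.

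\textbf{Dominance.} Rather than deforming a principal $\SL_2$ to produce a Zariski-dense $\rho$ with $\rho(\gamma)\neq 1$, the paper uses that surface groups are residually free (Baumslag) together with Borel's theorem on dominance of word maps: pick an epimorphism $\psi\colon\Gamma\to F_r$ with $\psi(\gamma)\neq 1$, then Borel gives $\rho\colon F_r\to\mathbf{G}$ with $\rho(\psi(\gamma))\neq 1$. Combined with Li's irreducibility of $\Hom(\Gamma,\mathbf{G}(\CC))$ (transferred to large characteristic via Bertini--Noether), this shows $V_{\mathbf{G}(\FF),\gamma}$ is a proper subvariety, hence of strictly smaller dimension. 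This is slicker than your deformation argument and avoids any rigidity input.

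\textbf{Degree control.} What you flag as ``the main obstacle'' is in fact benign in the closed case. Since $\Gamma$ is cocompact it is quasi-isometric to $\HH^2$, so the \v{S}varc--Milnor lemma gives $\tau_S(\gamma)\leq K\cdot\tau(\gamma)+K$; the defining equations of the fiber have degree $\leq K\cdot R$, so its degree is $\leq (KR)^{N_{\mathbf{G}}}$ for some $N_{\mathbf{G}}$ depending only on $\mathbf{G}$. This polynomial factor is swallowed by the $e^R/|\FF|$ term. No ``quantitative Lang--Weil'' or special integral model is needed beyond the elementary bound $|V(\FF)|\leq\deg(V)\cdot|\FF|^{\dim V}$.

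\textbf{The gap: the cusped case.} Your assertion that ``the elements $\gamma$ we care about may themselves have word length $\sim\log|G_n|$'' is false when $X$ is non-compact. Parabolic elements cause exponential distortion: an element $\gamma$ with $\tau(\gamma)<R$ can have word length as large as $e^{KR}$ (think of words that wind many times around a cusp before closing up). The degree of the fiber then explodes, and your Lang--Weil bound becomes useless. The paper treats the non-compact case separately: it proves a distortion lemma $\tau_S(\gamma)\leq e^{K\tau(\gamma)}$, and instead of Lang--Weil invokes the Gamburd--Hoory--Shahshahani--Shalev--Vir\'ag bound $\PP(\varphi(\beta)=1)\leq a\,\tau_S(\beta)^b/q$ for free groups, which has the word-length dependence already built in as a polynomial. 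The resulting constant $c$ is then strictly smaller than $1/\dim\mathbf{G}$, which is why the statement of the theorem only claims existence of some $c>0$.
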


If $X$ is closed, then one can take $c = \frac{1}{\dim(\mathbf{G})}$. Moreover, if $X$ is non-compact, the assumptions that $\mathbf{G}$ is simply connected and that the characteristic of $\FF$ is large enough can be dropped. That is, the results hold for any sequence of finite groups of Lie type of bounded rank whose order tends to infinity (but the constant $c$ we obtain is smaller than $\frac{1}{\dim(\mathbf{G})}$). It would be interesting to generalize the closed case to a larger class of groups (see also Remark~\ref{rem_irreducibility_problem}). In Section~\ref{sec_covers} we will also comment on random regular covers coming from symmetric groups, but the results there are much less conclusive.

Combining the result above with work due to Bourgain--Gamburd \cite[Theorem 3]{BourgainGamburd}, Brooks and Burger \cite{Brooks_transfer,Burger}, Magee \cite{Magee_letter} and Naud \cite{Naud_determinants}, we also obtain bounds on the spectral gap, the diameter and the determinant of the Laplacian. In the corollary below, $\lambda_1(X)$ denotes the first non-zero eigenvalue of the Laplacian of a closed hyperbolic surface $X$:
\begin{cor}\label{cor_spectral_gap}
Let $X$ be a closed orientable hyperbolic surface, then there exist constants $C,\eps>0$ such that, with high probability as $p\to\infty$,
\[
\lambda_1(X_{\SL_2(\FF_p)}) > \eps \quad \text{and} \quad \mathrm{diameter}(X_{\SL_2(\FF_p)})\leq C\cdot \log(\area(X_{\SL_2(\FF_p)})).\]
Moreover, if $\det(\Delta_p)$ denotes the  $\zeta$-regularized determinant of the Laplacian $\Delta_p$ of $X_{\SL_2(\FF_p)}$, then
\[
\frac{\log(\det(\Delta_p))}{\area(X_{\SL_2(\FF_p)})} \longrightarrow E \quad \text{in probability,}
\]
where $E=(4\zeta'(-1)-1/2+\log(2\pi))/4\pi = 0.0538\ldots$.
\end{cor}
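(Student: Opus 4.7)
The plan is to combine the systole lower bound of Theorem~\ref{thm_main_covers} with three external inputs: the Bourgain--Gamburd spectral gap for random Cayley graphs of $\SL_2(\FF_p)$, the Brooks--Burger transfer principle relating the Laplace spectrum of a surface cover to that of its Schreier graph, and Naud's theorem on log-determinants. Each of the three assertions then follows by assembling one or two of these ingredients on the high-probability event where Theorem~\ref{thm_main_covers} applies.

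For the first assertion, I would begin by noting that a uniformly random homomorphism $\varphi \colon \pi_1(X) \to \SL_2(\FF_p)$ is determined by uniformly random images of a fixed generating set of $\pi_1(X)$, constrained only by the single surface-group relation. With high probability as $p \to \infty$, these images satisfy the hypotheses of \cite[Theorem~3]{BourgainGamburd}, which yields a uniform spectral gap on the Cayley graph of $\SL_2(\FF_p)$ with respect to those generators. Since $X_{\SL_2(\FF_p)}$ is precisely the Galois cover associated to $\ker\varphi$, the Brooks--Burger transfer principle \cite{Brooks_transfer, Burger} converts this graph spectral gap into the lower bound $\lambda_1(X_{\SL_2(\FF_p)}) > \eps$. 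The diameter bound then follows by a Cheeger-type argument of Magee \cite{Magee_letter}: a uniform spectral gap, combined with Theorem~\ref{thm_main_covers}'s systole lower bound (which rules out thin necks), forces the diameter to grow at most logarithmically in the area.

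For the log-determinant assertion, I would apply Naud \cite{Naud_determinants}, who proved that along any sequence of closed hyperbolic surfaces Benjamini--Schramm converging to $\HH^2$ and enjoying a uniform spectral gap, the normalized log-determinant of the Laplacian converges to the constant $E$. Theorem~\ref{thm_main_covers} gives $\sys(X_{\SL_2(\FF_p)}) \to \infty$ with high probability, which, together with $\area \to \infty$, is well known to imply Benjamini--Schramm convergence to $\HH^2$. Combined with the spectral gap from the previous paragraph, Naud's theorem then applies on an event whose probability tends to $1$, delivering the convergence in probability.

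The main delicacy is verifying that the Bourgain--Gamburd hypotheses (generation of $\SL_2(\FF_p)$, girth of the Cayley graph, non-concentration at proper subgroups) are indeed satisfied with high probability by the image of a random $\varphi$. Since $\pi_1(X)$ is a surface group with many free parameters modulo a single relation, a random image should generate a Zariski-dense subgroup of $\SL_2$ and, via strong approximation, surject onto $\SL_2(\FF_p)$ for all sufficiently large $p$; making this quantitative enough to feed into Bourgain--Gamburd is the step that would require the most care. Beyond that, everything is an essentially mechanical combination of the cited results, with Theorem~\ref{thm_main_covers} supplying the geometric input on each high-probability event.
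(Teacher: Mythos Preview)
Your overall architecture matches the paper's proof sketch exactly: Bourgain--Gamburd gives a spectral gap for the Cayley graph, Brooks--Burger transfers it to the surface, Magee's bound gives the diameter, and Naud's theorem handles the determinant once you have both systole $\to\infty$ and a uniform spectral gap.

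The one place where your account diverges from the paper is in how the Bourgain--Gamburd hypothesis is verified, and here your final paragraph is misdirected. For $\SL_2(\FF_p)$, \cite[Theorem~3]{BourgainGamburd} requires only that the Cayley graph have girth $\geq \tau\log p$; generation and non-concentration follow automatically from this in the $\SL_2$ case (this is precisely why the corollary is stated only for $\SL_2$). Your proposed route via Zariski density and strong approximation addresses surjectivity of $\varphi$, but does nothing to establish the girth bound, which is the actual input needed. The paper instead observes that the same union-bound argument that proves Theorem~\ref{thm_main_covers} --- replacing hyperbolic translation length by word length throughout --- shows directly that $\mathrm{Cay}(\SL_2(\FF_p),\varphi(S))$ has logarithmic girth with high probability. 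Equivalently, you could deduce the girth bound from the systole bound already proved: if $\gamma\in\ker(\varphi)$ then $\tau(\gamma)\geq c\log|\FF_p|$, and Lemma~\ref{lem_svarcmilnor_translength} gives $\tau_S(\gamma)\geq \tau(\gamma)/K_{X,S}$, whence the girth is $\gtrsim\log p$. Either way, the ``delicacy'' you flag dissolves once you recognize that girth, not generation, is the relevant hypothesis, and that it comes essentially for free from the machinery already in place.
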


The bounds on the spectral gap and the diameter were already known to be true for $\SL_2(\FF_p)$ for a dense sequence of primes $p$, due to work by Breuillard--Gamburd \cite{BreuillardGamburd} (again combined with \cite{Brooks_transfer, Burger, Breuillard} and \cite{Magee_letter}). The reason that we obtain our corollary only for $\SL_2$ is that for other finite groups of Lie type, large girth alone does not suffice for expansion. Instead, one needs to know that short words don't concentrate in proper subgroups of $\mathbf{G}(\FF)$. The reason we only state the result for closed hyperbolic surfaces is that a uniform spectral gap for non-compact surfaces of finite area does not need our Theorem~\ref{thm_main_covers} and instead follows immediately from by combining the work on expansion of random Cayley graphs by Bourgain--Gamburd \cite{BourgainGamburd} and Breuillard--Green--Guralnick--Tao \cite{BGGT} with the Brooks--Burger transfer method\footnote{This method is usually stated for covers of a compact manifold, but because the geometry of cusps of hyperbolic surfaces of finite area is well understood, it can be generalized to this case using ideas similar to those in \cite{Brooks_apollonian, Hamenstaedt}.} \cite{Brooks_transfer, Burger, Breuillard}.

We also note that it follows that the surfaces from Corollary \ref{cor_spectral_gap} and the conformal compactifications of the non-compact surfaces of Theorem~\ref{thm_main_covers} form families of surfaces such that the ratio $\mathrm{diameter}(X)/\mathrm{systole}(X)$ is uniformly bounded. How the minimum of this quantity behaves as the underlying genus tends to infinity is currently open. This question has for instance been evoked in \cite{BCP_mindiam, BalacheffDespreParlier} (analogous questions have also been raised in graph theory \cite{LinialSimkin,ArzBis}).

Finally, it is natural to ask how many non-isometric surfaces both random processes generate. We prove lower bounds using ideas of Liebeck--Shalev \cite{LiebeckShalev2}, Burger--Gelander--Lubotzky--Mozes \cite{BGLM}
and Belolipetsky--Gelander--Lubotzky--Shalev \cite{BGLS}. We obtain:

\pagebreak

\begin{thm} \label{thm:many}
    The following statements hold:
\begin{itemize}
\item[(a)] Fix any constant $c < \frac{2}{9}$. Then there are at least 
\[
n^{n+\lo(n)}
\]
pairwise non-isometric cusped hyperbolic surfaces $X_n$ of area equal to $2\pi n$ and systole at least $c \log(\area(X_n))$ that can be obtained by gluing together $2n$ ideal hyperbolic triangles without shear.
\item[(b)]  Let $X$ be a complete orientable non-arithmetic hyperbolic surface of genus $g$ with $n$ cusps. Then there exist constants $a_X,a_X'>0$ such that $X$ has at least
    \[
        a_X \cdot \frac{\card{\SL(m,\FF)}^{2g+n-3}}{\log(\card{\SL(m,\FF))})}
        \geq
        a_X' \cdot \frac{\card{\FF}^{(m^2-1)\cdot (2g+n-3)}}{(m^2-1)\log(\card{\FF})} 
    \]
pairwise non-isometric $\SL(m,\FF)$-covers whose systoles are at least
\[
(c_{m,X} + \lo(1)) \cdot \log(\card{\SL(m,\FF))}).
\]
for some constant $c_{m,X}>0$ depending on $m$ and $X$ only.
\end{itemize}
\end{thm}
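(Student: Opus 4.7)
Both parts fit the Liebeck--Shalev / Belolipetsky--Gelander--Lubotzky--Mozes counting template: lower-bound the total number of candidate combinatorial objects, then divide by an upper bound on the relevant automorphism group. The plan is to carry this out separately for each construction.

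For part (a), I would first observe that the initial piece $X_n^{(0)}$ has exactly $2n$ free boundary sides (each of its $2n$ ideal triangles has two sides already glued and one free), so the random process encodes a perfect matching on these $2n$ sides. The total number of such matchings is $(2n-1)!! = (2n)!/(2^n\,n!) = n^{n+\lo(n)}$ by Stirling. Next, I would argue that the first-moment estimate underlying the proof of Theorem~\ref{thm_main_linsim} applies equally to a uniformly random matching and shows that the expected number of resulting closed geodesics of length less than $c\log n$ is $\lo(1)$; consequently a $(1-\lo(1))$-fraction of the $(2n-1)!!$ matchings produces surfaces satisfying the systole bound. To pass from matchings to isometry classes, I would then bound by $n^{\lo(n)}$ the number of distinct matchings yielding the same hyperbolic surface: any isometry between two such surfaces preserves the canonical zero-shear ideal triangulation and hence descends to a combinatorial automorphism of the decorated map underlying $X_n^{(0)}$; such automorphism groups have size $n^{\lo(n)}$ by the same map-counting estimate that Burger--Gelander--Lubotzky--Mozes use to bound the number of triangulations of a hyperbolic surface of a given area. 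Dividing yields the desired $n^{n+\lo(n)}$ non-isometric surfaces.

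For part (b), let $\Gamma = \pi_1(X)$. First, I would invoke Margulis' commensurator theorem: non-arithmeticity of $X$ forces $\mathrm{Comm}_{\PSL_2(\RR)}(\Gamma) = \Gamma$, so any isometry between two normal $G$-covers of $X$ lifts to an element $g\in\Gamma$; since the kernels are normal in $\Gamma$ they are fixed by conjugation by $g$, whence isometric covers must share the same kernel. Thus the number of non-isometric $G$-covers equals the number of distinct kernels of epimorphisms $\Gamma\to G$, namely $|\mathrm{Epi}(\Gamma,G)|/|\Aut(G)|$. Next, I would plug in the bounds: $|\Aut(G)| = O(|G|\log|\FF|)$ for $G=\SL(m,\FF)$ (inner, diagonal, field and graph automorphisms); $|\Hom(\Gamma,G)| = |G|^{-\chi(X)+1}(1+\lo(1))$ via Frobenius' formula (closed $X$) or freeness of $\Gamma$ (cusped $X$); and the Liebeck--Shalev random-generation lemma to conclude that almost all such homomorphisms are surjective when $G$ is simple of bounded Lie rank. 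Finally, Theorem~\ref{thm_main_covers} ensures that a $(1-\lo(1))$-fraction of these covers realize the claimed systole bound, and combining with $\log|\FF|\asymp\log|G|/(m^2-1)$ gives the announced inequality.

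The main technical obstacle is the map-counting step in part (a): ruling out the possibility that a single hyperbolic surface arises from exponentially-in-$n$ many matchings of the triangles of $X_n^{(0)}$ is the ideal, zero-shear analogue of the combinatorial-map lemma used in the BGLM proof of the $V^{O(V)}$ upper bound on non-arithmetic lattice growth, and extra care is required because ideal triangulations of a surface are not canonically unique. For part (b), the subsidiary subtlety is verifying the epimorphism density for the closed case, where $\Gamma$ is a surface group rather than a free group, so one must track the Witten-type character sum and confirm that it is dominated by the trivial representation for all sufficiently large $|\FF|$.
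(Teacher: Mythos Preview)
Your outline for part (b) is essentially the paper's argument, with one correctable slip: Margulis only gives that $\Gamma$ has \emph{finite index} in $\mathrm{Comm}_{\PSL_2(\RR)}(\Gamma)$, not equality. This does not hurt you much, because your kernels are normal in $\Gamma$; hence $\Gamma$ lies in their normalizer inside the commensurator, and the number of $\PSL_2(\RR)$-conjugates of a given kernel is bounded by the constant $[\mathrm{Comm}(\Gamma):\Gamma]$. The paper is slightly cruder here (it divides by $C'_\Gamma\cdot|G|$ without exploiting normality), which is why its exponent is $2g+n-3$ rather than the $2g+n-2$ your argument would yield.

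Part (a), however, has a genuine gap. Your key step---``the first-moment estimate underlying the proof of Theorem~\ref{thm_main_linsim} applies equally to a uniformly random matching and shows that the expected number of closed geodesics of length $<c\log n$ is $\lo(1)$''---is not what Theorem~\ref{thm_main_linsim} proves and is almost certainly false. The unconditioned model you describe (glue the $2n$ free sides of $X_n^{(0)}$ uniformly at random) is a variant of the Brooks--Makover model, and the same mechanism that gives Brooks--Makover surfaces bounded systole operates here: matching two nearby half-edges on the $(LR)^n$ ring creates a cycle of bounded trace, and a back-of-the-envelope calculation gives $\Theta(1)$ such cycles in expectation, not $\lo(1)$. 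The proof of Theorem~\ref{thm_main_linsim} avoids this precisely by \emph{conditioning} at every step; nothing in it bounds the density of good matchings among all matchings.

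The paper therefore proceeds quite differently. It counts \emph{successful runs} of the conditioned process (at least $n^{2n+\lo(n)}$ by Lemma~\ref{lem:2.3}(iii)), multiplies by side-labelings, and divides by the overcount coming from edge-orderings and the choice of the initial $(LR)^n$ cycle. Passing to isometry classes is also more delicate than you indicate: all outputs are covers of the arithmetic orbifold $\PSL(2,\ZZ)\backslash\HH^2$, so the commensurator is dense and your ``automorphisms of the decorated map'' bound does not by itself control $\PSL(2,\RR)$-conjugacy. The paper interprets the surfaces as index-$6n$ subgroups of $\PSL(2,\ZZ)$ and uses the Belolipetsky--Gelander--Lubotzky--Shalev congruence-closure argument together with Lubotzky's $n^{\lo(n)}$ bound on congruence subgroups to finish.
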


The surfaces produced by Theorem~\ref{thm_main_linsim} can be seen as orbifold covers of the modular curve $\PSL(2, \ZZ) \backslash \HH^2$. The first examples of surfaces of logarithmic systole are also covers of the modular curve, namely principal congruence covers \cite{Brooks_injrad, BuserSarnak, Schmutz_congruence}. It follows Theorem~\ref{thm:many}~(a), combined with Lubotzky's upper bound on the number of congruence subgroups \cite{Lubotzky}, that the overwhelming majority of the surfaces we produce here do not correspond to principal congruence covers.

\subsection*{The multiplicative constants}
We finish this introduction by comparing our results to the best known bounds on systoles and the other constructions available in the literature. Currently, the best known upper bounds on systoles of hyperbolic surfaces of finite area  of genus $g$ with $n$ cusps are
\[
\sys(X) \leq \left\{
\begin{array}{ll}
2\log(g) + 2.409 + \lo(1) \quad \text{as }g\to\infty & \text{if }n=0 \text{ \cite{FBP}} \\[3mm]
\min\left\{2\log(g)+8,2\cosh^{-1}\left(\frac{6g+3n-6}{n}\right)\right\} & \text{if } g\neq 0 \text{ \cite{Schmutz_congruence,FanoniParlier}}\\[3mm]
2\cosh^{-1}\mathopen{}\left(\frac{3n-6}{n}\right)\mathclose{} & \text{otherwise \cite{Schmutz_congruence}}
\end{array}\right.
\]
In particular, the last two bounds imply that a sequence of surfaces of logarithmic systole $X_k$ necessarily satisfies $n(X_k) = \lo(g(X_k)^{1-\eps})$ for some uniform $\eps>0$.

Sequences of surfaces with logarithmic systoles were known to exist, but there are not that many constructions available: the first known examples come from arithmetic constructions \cite{Brooks_injrad, Brooks, BuserSarnak, Schmutz_congruence, KatzSchapsVishne} and still hold the record for the multiplicative constant in front of the logarithm, namely $\frac{4}{3}$. More recently, various constructions inspired by graph theory were found in \cite{PW, Pet2} whose multiplicative constants are $1$ and $\frac{4}{7}$ respectively. The largest we obtain with our random constructions is $\frac{1}{3}$ for random $\SL_2(\FF)$ covers of closed surfaces.

\subsection*{The proofs}
The rest of this article is divided into two sections. The first of these, Section~\ref{sec:BLS}, is about random Bely\u{\i} surfaces. We first prove Theorem~\ref{thm_main_linsim}, rephrased as Theorem~\ref{thm_linsim_rephrased}. Theorem~\ref{thm_liminf} is proved in Section~\ref{sec_maxsys} and Theorem~\ref{thm:many} (a) in Section~\ref{sec_counting_belyi}. Section~\ref{sec_covers} deals with random regular covers. We prove Theorem~\ref{thm_main_covers} separately for closed (Theorem~\ref{thm_randcoverscompact}) and non-compact surfaces of finite area (Theorem~\ref{thm_randcovers_noncompact}). Finally, we prove Theorem~\ref{thm:many} (b) in Section \ref{sec_counting_covers}.

\subsection*{Acknowledgements}
We thank Sebastian Hensel for references on geometric group theory, Marco Maculan for algebraic help, Michael Magee, Fr\'ed\'eric Naud and Doron Puder for the bound we needed for Remark \ref{rem_symmetric_group} and Anna Roig-Sanchis for useful conversations surrounding Lemma~\ref{lem_distortion}. We also thank Nati Linial and Michael Simkin for comments on an earlier draft. Moreover, we thank Hugo Parlier for making us aware of the work by Katz--Sabourau and Stéphane Sabourau for sharing a version of their preprint. We also thank the anonymous referee for their careful reading of this text and their comments. We are also 
grateful for the ``Tremplins nouveaux entrants'' grant from Sorbonne Universit\'e, which allowed ML to visit BP in Paris.
ML is supported by the Luxembourg National Research Fund OPEN grant O19/13865598.

\section{Random Bely\u{\i} surfaces}\label{sec:BLS}
In \cite{BM}, Brooks and Makover introduced a model for random hyperbolic surfaces of large genus.
In this model, $2n$ ideal hyperbolic triangles are glued together at random, using gluings without shear, into an oriented cusped hyperbolic surface.
This surface, which by the uniformization theorem can be seen as a Riemann surface, is then conformally compactified so as to obtain a closed Riemann surface\footnote{This conformal structure is the same as that obtained by gluing equilateral Euclidean triangles together using isometries.}.
The resulting surface will be a Bely\u{\i} surface and typically has genus roughly $n/2$ \cite{Gamburd,ChmutovPittel}. Its systole converges to a real-valued random variable, i.e.\ it is asymptotically almost surely bounded \cite{Pet1}. For instance, the expected systole converges to about $2.48$. 

The goal of this section is is to modify Brooks and Makover's construction, using ideas due to Linial--Simkin \cite{LinialSimkin}. Concretely, we will prove Theorem~\ref{thm_main_linsim}, which we will rephrase as Theorem~\ref{thm_linsim_rephrased} below.

\subsection{Geometry and topology}\label{sec_geomtopbelyi}
First, we describe the geometry of an oriented hyperbolic surface $S$ equipped with a triangulation $\calT$ by ideal hyperbolic triangles without shear\footnote{The word triangulation is used somewhat liberally here: $\calT$ has no vertices and hence consists of edges and faces only.}.
We allow $S$ to have boundary and observe that if $S$ does then each of its boundary components contain at least one cusp. When we say cusp here, we mean an open subset of the surface that contains part of the boundary and is isometric to an open neighborhood of an ideal vertex of an ideal hyperbolic triangle (or equivalently of a region in $\HH^2$ delimited by two geodesics with a common endpoint at infinity).

The orientation of $S$ induces an orientation at each vertex of the dual graph $\calG$ to $\calT$. Here an orientation at a vertex of a graph is a cyclic ordering of the outgoing edges at that vertex.
A graph equipped with such an orientation is sometimes called a ribbon graph or a fat graph.
We will attach a half-edge to $\calG$ for each edge in $\calT$ that lies on the boundary, so that all vertices in $\calG$ are trivalent (see Figure \ref{pic_init}).

If $\gamma \colon [0,1] \to \calG$ is a path, we can associate a word $w(\gamma)$ in the letters ``$L$'' and ``$R$'' to it:
at every vertex of $\calG$ that $\gamma$ traverses it turns either right or left with respect to the orientation on $\calG$.
So if we record an ``$L$'' or an ``$R$'' respectively at each turn and put these letters in the correct order, we obtain the word $w(\gamma)$.
We will say that $\gamma$ carries the word $w(\gamma)$. In what follows, we will often forget about the parametrization of a curve $\gamma$, but still speak of ``the'' associated word $w(\gamma)$. Technically, this makes no sense, because the word depends on the choice of parametrization, however, in all cases below, this is of no influence. That is, the resulting definitions are independent of the choice of parametrization.

There will be two types of closed curves on $S$ that are of particular interest to us:
\begin{itemize}
    \item Loops that go around a cusp once. Note that such a loop exists if and only if the cusp in question does not lie on the boundary.
        Depending on their orientation, they can be homotoped into left-hand-turn cycles or right-hand-turn cycles in $\calG$. We note that in $\calG$, these cycles are not necessarily simple.
        The number of cusps in the interior of $S$ hence equals the number of primitive\footnote{Primitive here means that the element of $\pi_1(S)$ induced by the cycle is not a non-trivial power.} left-hand turn cycles in $\calG$.
        So, if $S$ is connected and has no boundary, one computes using the Euler characteristic that
        \[
            \mathrm{genus}(S)
            =
            \frac{n+2-\mathrm{LHT}(\calG)}{2},
        \]
        where $\mathrm{LHT}(\calG)$ is the number of primitive left-hand turn cycles in $\calG$. 
    \item Essential closed curves: closed curves in $S$ that are not homotopic to a point or a  cusp\footnote{Note that we allow these curves to be homotopic into the boundary of $S$.
        This is because the only surfaces with boundary we will see in what follows appear as a step in a process in which the boundary will eventually disappear.}. By classical hyperbolic geometry, these curves, considered up to free homotopy, correspond one-to-one to closed geodesics in $S$. 

        Such a curve $\gamma$ can be freely homotoped to a unique geodesic $\overline{\gamma}$ in $\calG$.
        Here a geodesic in $\calG$ is a path that does not backtrack.
        The length of the unique geodesic $\what{\gamma}$ in $S$ homotopic to $\gamma$ can now be computed using $w(\overline{\gamma})$.
        First of all, replace the letters in $w(\overline{\gamma})$ with the matrices
        \[
            L
            \coloneqq
            \left(
                \begin{array}{cc}
                    1 & 1 \\
                    0 & 1
                \end{array}
            \right)
            \quad
            \text{and}
            \quad
            R
            \coloneqq
            \left(
                \begin{array}{cc}
                    1 & 0 \\
                    1 & 1
                \end{array}
            \right)
        \]
        so that $w(\overline{\gamma})$ becomes a matrix.
        The length of $\overline{\gamma}$ can now be computed by
    \[
        \ell(\what{\gamma})
        =
        2\cosh^{-1} \mathopen{}\left( \frac{\tr(w(\overline{\gamma}))}{2} \right)\mathclose{}.
    \]	  
    The formula above makes sense, because the choice of parametrization of $\gamma$ is of no influence on $\tr(w(\overline{\gamma}))$.
\end{itemize}

\subsection{Properties of words and their traces}
This section records some properties of words in $L$ and $R$ that we shall need below. In what follows we will write $W$ for the set of words in $L$ and $R$. We will need the following elementary lemmas:

\begin{lem}\label{lem_traceincrease}
    Let $w\in W$, then
    \[
        \tr(Lw) = \tr(wL) \geq \tr(w)
        \quad \text{and} \quad
        \tr(Rw) = \tr(wR) \geq \tr(w)
    \]
    and these inequalities are strict when $\tr(w) \neq 2$.
\end{lem}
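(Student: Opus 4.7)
The two equalities $\tr(Lw)=\tr(wL)$ and $\tr(Rw)=\tr(wR)$ follow immediately from the cyclicity of the trace, so the content of the lemma is in the inequalities and their strictness. My plan is to prove them by direct computation on the matrix entries of $w$, exploiting the very special form of $L$ and $R$.

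First I would write $w=\bigl(\begin{smallmatrix} a & b \\ c & d\end{smallmatrix}\bigr)$ and compute
\[
    \tr(Lw)-\tr(w)=c,\qquad \tr(Rw)-\tr(w)=b.
\]
Both quantities are manifestly non-negative provided the entries of $w$ are non-negative, which is the key observation: since $L$ and $R$ are $2\times 2$ matrices with entries in $\mathbb{Z}_{\geq 0}$, a straightforward induction on word length shows that every $w\in W$ has entries in $\mathbb{Z}_{\geq 0}$. This already yields the weak inequalities.

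To get strictness I would argue by contraposition: if, say, $c=0$, then $\det(w)=ad=1$ forces $a=d=1$ (both are non-negative integers with product $1$), and hence $\tr(w)=2$; the same argument handles $b=0$. Thus $\tr(Lw)=\tr(w)$ implies $c=0$ implies $\tr(w)=2$, and symmetrically on the right.

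I do not foresee a genuine obstacle here; the only thing to be slightly careful about is the edge case $w=I$ (the empty word), where indeed $\tr(w)=2$ and the inequalities are equalities, which is perfectly consistent with the statement. The proof is really just a one-line matrix calculation together with the determinant constraint.
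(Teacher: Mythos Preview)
Your argument is correct: the computation $\tr(Lw)-\tr(w)=c$, $\tr(Rw)-\tr(w)=b$, non-negativity of the entries of $w$, and the determinant trick for strictness are exactly what is needed. The paper does not give its own proof but simply cites \cite[Lemma~3.1]{PW}, where the same direct matrix computation is carried out; your write-up matches that argument.
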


\begin{lem}\label{lem_tr_lower_bound}
    Let $w\in W$ be a word of $k$ letters with $\tr(w) >2$, then
    \[
        \tr(w) \geq k+1.
    \]
\end{lem}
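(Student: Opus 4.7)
The plan is to scan $w = X_1 X_2 \cdots X_k$ from the left, locate the first prefix that contains both letters, compute the trace of that prefix explicitly, and then propagate the bound to the end using Lemma~\ref{lem_traceincrease}.

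Since $\tr(L^k) = \tr(R^k) = 2$, the hypothesis $\tr(w) > 2$ forces $w$ to contain both letters. I would let $j_0 \geq 2$ be the smallest index for which the prefix $X_1 \cdots X_{j_0}$ contains both $L$ and $R$. By construction the first $j_0 - 1$ letters of $w$ all coincide, so up to swapping the roles of $L$ and $R$ this prefix equals $L^{j_0 - 1} R$. A direct matrix computation gives
\[
L^{j_0 - 1} R = \begin{pmatrix} j_0 & j_0 - 1 \\ 1 & 1 \end{pmatrix},
\]
which has trace exactly $j_0 + 1$.

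Next, I would build $w$ up by right-multiplying one letter at a time, setting $M_j \coloneqq X_1 \cdots X_j$, so that $M_j = M_{j-1} \cdot X_j$. Starting from $j = j_0$, the running trace satisfies $\tr(M_j) \geq j_0 + 1 \geq 3 > 2$. Hence, for every subsequent $j > j_0$, Lemma~\ref{lem_traceincrease} (in the right-multiplication form $\tr(wL) = \tr(Lw)$ and $\tr(wR) = \tr(Rw)$) yields $\tr(M_j) > \tr(M_{j-1})$. Because $L$ and $R$ have non-negative integer entries, the trace of any product of them is an integer, so each of these strict increases is by at least $1$. Iterating gives
\[
\tr(w) = \tr(M_k) \geq \tr(M_{j_0}) + (k - j_0) = (j_0 + 1) + (k - j_0) = k + 1.
\]

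There is no real obstacle in this argument beyond a single observation that has to be made before the induction starts: the trace is an integer, and once the first mixed prefix is reached it stays strictly above $2$, so each additional letter contributes an honest increment of at least $1$. Once this is in place, the whole proof is a one-line matrix computation combined with a straightforward iteration of Lemma~\ref{lem_traceincrease}.
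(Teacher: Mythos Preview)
Your argument is correct. The only tiny expository point is that the strictness from Lemma~\ref{lem_traceincrease} at step $j$ requires $\tr(M_{j-1})\neq 2$, which you are really establishing by induction (once $\tr(M_{j_0})\geq 3$, each subsequent trace stays $\geq 3$); this is implicit in what you wrote but could be stated as an honest induction on $j\geq j_0$.

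As for comparison with the paper: the paper does not prove this lemma at all, it simply cites \cite[Lemma~3.1]{PW}. Your self-contained argument---locating the first mixed prefix, computing its trace exactly, and then iterating Lemma~\ref{lem_traceincrease} using integrality to turn strict inequalities into increments of at least $1$---is the natural elementary proof and is entirely adequate here.
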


For proofs of these two lemmas, see \cite[Lemma 3.1]{PW}. Moreover, we need the following counting result:
\begin{prp}[\text{\cite[Proposition~3.4]{PW}}]\label{prp_counting} We have as $m \to \infty$,
\[
\card{\st{w\in W}{ 3\leq \tr(w) \leq m}} = \bO(m^2 \log(m)).
\]
\end{prp}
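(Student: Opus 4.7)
\textit{Plan.} The strategy is to translate the counting problem into a count of matrices in $\SL_2(\ZZ)$ and then apply standard bounds involving the divisor function. The first observation is that $L$ and $R$ freely generate a submonoid of $\SL_2(\ZZ)$---which can be checked directly, or deduced from the unique ``continued fraction'' factorization of a nonnegative-entry matrix in $\SL_2(\ZZ)$ provided by the Euclidean algorithm. The map $w \mapsto w(L, R)$ therefore identifies $W$ with the set of nonidentity matrices in $\SL_2(\ZZ)$ whose entries are nonnegative integers, and the task becomes to count such matrices whose trace lies in $[3, m]$.

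For such a matrix $M = \left(\begin{array}{cc} a & b \\ c & d \end{array}\right)$ one has $a + d = t$, $a, d \geq 1$, and $bc = ad - 1$. Since $t \geq 3$ forces $ad \geq 2$, the number of admissible pairs $(b, c) \in \ZZ_{\geq 0}^2$ is exactly $\tau(a(t - a) - 1)$, where $\tau$ denotes the divisor function. Consequently,
\[
    \card{\st{w \in W}{3 \leq \tr(w) \leq m}} = \sum_{t = 3}^{m} \sum_{a = 1}^{t - 1} \tau\bigl(a(t - a) - 1\bigr).
\]

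The last step is to bound this double sum by $\bO(m^2 \log(m))$. I would expand $\tau$ as $\sum_{d \mid a(t - a) - 1} 1$ and swap the order of summation. For fixed $d$, the inner sum counts $a \in [1, t - 1]$ satisfying the quadratic congruence $a^2 - t a + 1 \equiv 0 \pmod{d}$, which has at most $2^{\omega(d)}$ solutions modulo $d$. Combined with the classical estimates $\sum_{d \leq N} 2^{\omega(d)}/d = \bO(\log^2(N))$ and $\sum_{d \leq N} 2^{\omega(d)} = \bO(N \log(N))$, this reduces the problem to a careful (but standard) analytic number theoretic computation. I expect this last step to be the main technical hurdle: a crude approach via Cauchy--Schwarz on divisor sums or the trivial bound $\tau(n) = \bO(n^\eps)$ only yields $\bO(m^2 \log^3(m))$ or $\bO(m^{2 + \eps})$, and obtaining the sharper single logarithm requires exploiting the constraint $a + d \leq m$ rather than the looser $ad \leq m^2/4$ given by AM--GM.
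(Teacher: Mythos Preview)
The paper does not prove this proposition here; it is quoted from \cite{PW}. Your reduction, however, is correct: the free monoid on $L,R$ is exactly the set of $\SL_2(\ZZ)$ matrices with nonnegative entries, and restricting to trace $\geq 3$ forces all four entries to be $\geq 1$, giving precisely the divisor sum you wrote.

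The final step has a gap as stated. The pointwise claim ``at most $2^{\omega(d)}$ solutions to $a^2-ta+1\equiv 0\pmod d$'' fails when $d$ shares a repeated prime factor with the discriminant $t^2-4$ (e.g.\ $t=7$, $d=9$ has three solutions). More importantly, even where it holds, feeding $2^{\omega(d)}$ into your sum produces $\sum_{d\leq m/2}2^{\omega(d)}/d=O(\log^2 m)$ and hence only $O(m^2\log^2 m)$, one logarithm too many---as you suspected.

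Both issues vanish with a change of variable. Parametrize by $(a,d')$ with $d'=t-a$ rather than by $(t,a)$. Since $\sqrt{ad'-1}\leq (a+d')/2\leq m/2$, the hyperbola trick gives
\[
\sum_{\substack{a,d'\geq 1\\ a+d'\leq m}}\tau(ad'-1)\ \leq\ 2\sum_{e\leq m/2}\#\bigl\{(a,d'): a+d'\leq m,\ ad'\equiv 1\pmod e\bigr\}.
\]
For fixed $e$ the congruence is \emph{linear}: each $a$ coprime to $e$ determines a single residue class $d'\equiv a^{-1}\pmod e$, so the inner count is at most $\binom{m}{2}/e+m$, and summing over $e\leq m/2$ gives $O(m^2\log m)$ directly---no quadratic congruence, no $2^{\omega(d)}$. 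Alternatively, if you keep the $(t,a)$ parametrization, replace the pointwise bound by the average $\sum_{t\bmod e}\rho(e,t)=\phi(e)$ (since $t\equiv a+a^{-1}$ for each unit $a$); this also recovers the single logarithm.
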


We remark that the lower bound (of $3$) on the trace is necessary, otherwise the set above would be infinite due to words of the form $L^k$ and $R^k$.

\subsection{The construction}\label{sec_theconstruction}
Now we get to the construction. As Theorem~\ref{thm_main_linsim} states, the goal is to start with a surface with a ``$2$-valent'' triangulation that has no short closed geodesics (see Figure~\ref{pic_init} for an example) and then complete this into a surface that still doesn't have any short closed geodesics. There is a way to do this completion carefully, similarly to how this was done for graphs by Erd\H{o}s--Sachs in \cite{ES}, which was described in \cite{PW}.
The moral of what follows (and also of Linial and Simkin's result in that case of graphs \cite{LinialSimkin}) is that in fact, if we set the obvious condition that no short closed curves are created in any step and then glue at random, the probability that it works is very high.

\begin{figure}[t]
\includegraphics[scale=1]{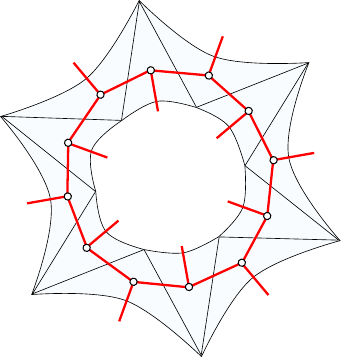}
\caption{An example of an initial configuration $X_n^{(0)}$ (with the orientation induced by the page) and its dual graph. The only primitive cycle carries the word $(LR)^n$, which has trace $\left(\frac{3+\sqrt{5}}{2}\right)^n + \left(\frac{3-\sqrt{5}}{2}\right)^n$ and hence $X_n^{(0)}$ can be used as input for Theorem~\ref{thm_linsim_rephrased}.}
\label{pic_init}
\end{figure}

In order to formally define the conditioning, we need a notion of distance that we can use to measure when two half-edges in an oriented graph, corresponding to two unmatched sides on the boundary of a triangulated surface, are too close to glue together. The material in Section \ref{sec_geomtopbelyi} allows us to associate a word and hence a trace to a path in $\calG$ that connects two half-edges coming out the boundary $\partial S$ of $S$: given two half-edges $h_1, h_2$ of $\calG$ corresponding to two edges of $\calT$ that lie on $\partial S$, we denote by $P_{\calG}(h_1, h_2)$ the set of non-backtracking paths in $\calG$ that start on $h_1$ and end on $h_2$. We now define the trace distance between the half-edges $h_1$ and $h_2$ to be
\begin{equation}\label{eq:tau}
    \tau_{\calG}(h_1, h_2)
    \coloneqq
    \min \big\{ \tr(w(\gamma)) : \gamma \in P_{\calG}(h_1, h_2),\ \tr(w(\gamma)) > 2 \big\}
\end{equation}

The reason for defining this trace distance is that any path $\gamma \in P_{\calG}(h_1, h_2)$ will close up if we glue $h_1$ and $h_2$ together, thus creating a loop whose geodesic representative has trace $\tr(w(\gamma))$. 

\medskip
\fbox{ \begin{minipage}{14cm}
\textbf{The process}: 

\medskip
\underline{Initialisation}: \\
Objects: $\tau_0,n\in\NN$, $t=0$ and
\begin{itemize}
\item[-] a hyperbolic surface $X_n^{(0)}$ with boundary such that
\begin{itemize}
\item each boundary component contains at least one cusp,
\item all the cusps lie on the boundary and
\item all closed geodesics have length at least $\sigma_0 \coloneqq 2\cosh^{-1}(\tau_0/2)$,
\end{itemize}
\item[-] an ideal triangulation $\calT^{(0)}$ with shear $0$ of $X_n^{(0)}$, containing $2n$ triangles, each adjacent to exactly two triangles,
\item[-] the dual graph $\calG^{(0)}$ to $\calT^{(0)}$ whose set of half-edges will be denoted $\calH^{(0)}$,
\item[-] the set of \emph{available} half-edges
\[
\calA^{(0)} = \left\{ \{h_1, h_2\} : h_1, h_2, \in \calH_0, \ \tau_{\calG^{(0)}}(h_1,h_2) \geq \tau_0 \right\}
\]
at time $0$.
\end{itemize}

\medskip
\underline{Iteration}:
Stop if $\calA^{(t)} = \varnothing$.
While $\calA^{(t)}\neq \varnothing$, perform step $t+1$:
\begin{enumerate}
\item Select a pair of half-edges $(h_1,h_2)\in\calA^{(t)}$ uniformly at random.
\item Identify $h_1$ with $h_2$ and identify the corresponding sides in the triangulation $\calT^{(t)}$ so as to create a new surface $X_n^{(t+1)}$ with a triangulation $\calT^{(t+1)}$, whose dual graph is denoted $\calG^{(t+1)}$, the half-edges of which are denoted $\calH^{(t+1)}$.
\item Set 
\[
\calA^{(t+1)} = \left\{ \{h_1, h_2\} : h_1, h_2, \in \calH^{(t+1)}, \ \tau_{\calG^{(t+1)}}(h_1,h_2) \geq \tau_0 \right\}
\]
the set of available half-edges at time $t+1$.
\item Add $1$ to $t$.
\end{enumerate}
\medskip
\underline{Output}: $t_{\max}\in \{0,\ldots,n\}$, the time at which the process stopped.
\end{minipage}}
\medskip

Observe that for all $t=0, 1, \dots, t_{\max}$ we have $|\calH^{(t)}| = 2n - 2t$ and hence if $t_{\max} = n$, then $X_n^{(n)}$ is a surface without boundary and with systole $\geq \sigma_0 = 2\cosh^{-1}(\tau_0/2)$. Our goal will be to prove:
\begin{thm}\label{thm_linsim_rephrased}
    Let $c<\frac{2}{9}$ and $\tau_0=\tau_0(n) \leq n^{c/2}$ for $n\geq 2$.
    Then
    \[
        \lim_{n \to \infty} \PP(t_{\max} = n)
        =
        1.
    \]
\end{thm}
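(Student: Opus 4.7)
The plan is to show that $\PP(t_{\max} < n) \to 0$. Since the process terminates prematurely only when $\calA^{(t)} = \varnothing$ for some $t < n$, it suffices to prove that the set of available pairs remains nonempty throughout, with high probability.

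The first step is to establish a deterministic upper bound on the number of \emph{bad pairs} at each time $t$, i.e., pairs $\{h_1, h_2\} \subset \calH^{(t)}$ with $\tau_{\calG^{(t)}}(h_1, h_2) < \tau_0$. Because $\calG^{(t)}$ is trivalent, each starting half-edge together with a word in $L, R$ determines at most one path in $\calG^{(t)}$, and hence at most one endpoint half-edge. Combined with Proposition~\ref{prp_counting}, which gives $|\{w : 2 < \tr(w) < \tau_0\}| = \bO(\tau_0^2 \log \tau_0) = \bO(n^c \log n)$, this yields
\[
|\{\text{bad pairs at time } t\}| \leq |\calH^{(t)}| \cdot \bO(\tau_0^2 \log \tau_0) = \bO\bigl((n-t) \cdot n^c \log n\bigr).
\]
Comparing with the total number of pairs $\binom{2(n-t)}{2} \sim 2(n-t)^2$, the number of good pairs in $\calA^{(t)}$ is strictly positive as long as $n - t$ exceeds a sufficiently large constant multiple of $n^c \log n$. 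Hence the process cannot get stuck during the early stages $t \leq n - C n^c \log n$ by purely deterministic considerations.

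The main difficulty is to handle the last $\bO(n^c \log n)$ steps, when $|\calH^{(t)}|$ is comparable to, or smaller than, the number of short-trace words and the pigeonhole estimate above becomes vacuous. Here the strategy is to exploit the randomness of $\calG^{(t)}$: with high probability the partially glued graph is sufficiently spread that only a small proportion of the $|\calH^{(t)}| \cdot |W_{\tau_0}|$ a priori bad configurations are actually realized as paths whose other endpoint still lies in $\calH^{(t)}$. Concretely, I would estimate the expected number of realized bad pairs, using the explicit form of the random matching generated by the first $t$ steps of the process, and upgrade this expectation bound into a high-probability statement via concentration along the natural Doob-type martingale attached to the process.

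The hard part will be precisely this late-stage probabilistic analysis, because the step-by-step conditioning on $\calA^{(t')}$ for $t' < t$ makes the distribution of $\calG^{(t)}$ awkward to describe directly. The threshold $c < \frac{2}{9}$ should emerge from balancing the $\tau_0^2 \log \tau_0$ growth of the word count against the rate at which the process depletes half-edges; the authors' comment that a nibbling procedure would push the constant up to $\frac{1}{3}$ suggests that a finer multi-phase decomposition of the process would optimize the bound, but is not needed for the weaker conclusion stated here.
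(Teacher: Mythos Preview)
Your deterministic first step matches the paper's Lemma~\ref{lem:2.3} and correctly handles the early stages. The genuine gap is in the late-stage analysis.

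You propose to bound the expected number of bad pairs at time $t$ and upgrade via concentration, but this approach faces a structural obstacle that you do not address: in the very last steps $|\calH^{(t)}|$ is of constant size, so showing that a ``small proportion'' of pairs are bad is useless---you need \emph{zero} bad pairs, and a first-moment or martingale bound on the number of bad pairs has no leverage there. Moreover, you would need this to hold simultaneously for all of the final $\bO(n^c\log n)$ times, and the conditioning problem you flag is real and unresolved in your sketch.

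The paper avoids this entirely by a different mechanism. It introduces an \emph{absorbing} property called \emph{safe}: every pair of half-edges is available \emph{and} every boundary cusp is incident to at least $\sqrt{\tau_0-2}$ triangles. The second condition is what makes safeness stable---once $\calG^{(t)}$ is safe, $\calG^{(t+1)}$ is automatically safe (Lemma~\ref{lem:safe}), so the process saturates with certainty from that point on. The probabilistic work is then reduced to a single time: show that $\calG^{(T+1)}$ is safe with high probability, where $T = n - \lceil n^{a+c}\rceil$. This is done by a union bound over \emph{threatening paths} (paths in $K_{2n}$ of small trace with a prescribed number $k$ of chords), bounding the probability that a given path with $k$ chords is realized and its endpoints survive to time $T+1$ (Lemmas~\ref{lem:2.6} and~\ref{lem_probability_bound_k_chords}), and summing over $k$ up to $\tau_0$. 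The constraint $c<\tfrac{2}{9}$ emerges precisely here, from requiring $\tfrac{7c}{2}+2a-1<0$ together with $a>\tfrac{c}{2}$; your proposed balancing heuristic does not produce this threshold.

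In short, the missing idea is the absorbing state: rather than controlling availability at every late time, you should find a property that, once attained, deterministically guarantees saturation, and then prove you reach it at one intermediate time.
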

In other words, the process saturates with high probability when $n \to \infty$ and produces a surface $X_n = X_n^{(n)}$ of systole
\[
    \sys(X_n) \gtrsim c\cdot \log(\area(X_n))
\]
as $n \to \infty$.

\subsection{The proof of Theorem~\ref{thm_linsim_rephrased}}
The strategy of proof is the same as that of Linial and Simkin \cite{LinialSimkin}. However, mostly because the trace  of a word cannot be uniformly compared to the number of letters in it, we will need to modify it. For readers who are familiar with Linial and Simkin's proof, we have added comments in this proof to compare the two. However, the proof presented here is self-contained and readers who are not familiar with Linial and Simkin's paper can ignore these comments.

From now on we will suppose that $\tau_0 \leq n^{c/2}$. We moreover fix $a > 0$ such that
\begin{equation}\label{eq:ca}
    a+c<1 \quad\text{and}\quad a>c/2.
\end{equation}
Because $c<\frac{2}{9}$, such an $a>0$ can always be found. We now set
\[
    T
    \coloneqq
    n - \lceil n^{a+c} \rceil.
\]
The reason that our conditions are slightly different than those of Linial and Simkin will become apparent below. We now first state a deterministic lemma that is the analogue of \cite[Lemma 2.3]{LinialSimkin}:

\begin{lem}\label{lem:2.3}
    There exists $n_0 = n_0(c)$ such that for all $n \geq n_0$ and $t \leq T$, the following holds: 
    \begin{itemize}
        \item[i.] $\calA^{(t)} \neq \varnothing$, and hence $t_{\max} > T$. 

        \item[ii.] For every half-edge $h \in \calH^{(t)}$, there exist at least $|\calH^{(t)}| - \bO( n^c \log n)$ half-edges $h'\in \calH^{(t)}$ such that $(h,h') \in \calA^{(t)}$.

        \item[iii.] We have
            \[
                |\calA^{(t)}|
                \geq
                \frac{|\calH^{(t)}|^2}{2} \left( 1 - \frac{\bO(n^c \log n)}{|\calH^{(t)}|} \right).
            \]
    \end{itemize}
\end{lem}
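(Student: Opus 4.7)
The plan is to prove the three parts in reverse dependency order. Part (ii) is the heart of the argument; part (iii) will follow by summing (ii) over $h$, and part (i) is then an immediate positivity check on the bound in (iii).

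For (ii), fix $h \in \calH^{(t)}$. By \eqref{eq:tau} and the definition of $\calA^{(t)}$, a half-edge $h' \in \calH^{(t)}$ satisfies $\{h,h'\} \notin \calA^{(t)}$ precisely when there exists a path $\gamma \in P_{\calG^{(t)}}(h,h')$ with $2 < \tr(w(\gamma)) < \tau_0$; since $L, R \in \SL(2,\ZZ)$, the trace is an integer, so equivalently $3 \leq \tr(w(\gamma)) < \tau_0$. Such a $\gamma$ can be taken non-backtracking without changing $w(\gamma)$. To each bad $h'$ I assign a witness word $w_{h'} \coloneqq w(\gamma_{h'})$. Because $\calG^{(t)}$ is trivalent at interior vertices, once the incoming edge at a vertex is fixed, each of $L$ and $R$ selects exactly one of the two remaining incident edges via the cyclic ordering inherited from the orientation of $S$. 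Therefore the pair (starting half-edge $h$, word $w$) determines a unique non-backtracking path in $\calG^{(t)}$, and in particular its endpoint. Injectivity of $h' \mapsto w_{h'}$ follows, which yields
\[
    \bigl|\bigl\{ h' \in \calH^{(t)} : \{h,h'\} \notin \calA^{(t)}\bigr\}\bigr| \leq \bigl|\bigl\{w \in W : 3 \leq \tr(w) < \tau_0\bigr\}\bigr| = \bO(\tau_0^2 \log \tau_0) = \bO(n^c \log n),
\]
by Proposition~\ref{prp_counting} together with $\tau_0 \leq n^{c/2}$. This proves (ii).

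Summing (ii) over $h \in \calH^{(t)}$ and dividing by $2$ (since the pairs in $\calA^{(t)}$ are unordered) gives (iii). For (i), when $t \leq T$ we have $|\calH^{(t)}| = 2(n-t) \geq 2\lceil n^{a+c}\rceil$. Since $a > 0$ by \eqref{eq:ca}, we get $n^{a+c}/(n^c \log n) = n^a/\log n \to \infty$; hence the factor $1 - \bO(n^c \log n)/|\calH^{(t)}|$ in (iii) exceeds $1/2$ for $n$ large enough, so $|\calA^{(t)}| \geq |\calH^{(t)}|^2/4 > 0$ throughout $t \leq T$. Consequently the process has not terminated at time $T$, i.e., $t_{\max} > T$.

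The main point that needs care is the injectivity of $h' \mapsto w_{h'}$: it rests on the claim that, in a trivalent ribbon graph, a starting half-edge together with an $LR$-word pins down a unique non-backtracking path and hence a unique endpoint. This has to be checked against the slightly flexible definition of $w(\gamma)$ (which a priori allows backtracking paths), but the trivalence makes the verification essentially automatic. Everything else is bookkeeping plus one application of Proposition~\ref{prp_counting}.
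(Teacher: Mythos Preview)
Your proof is correct and follows essentially the same approach as the paper's: bound the number of ``bad'' endpoints reachable from a fixed half-edge $h$ by the number of words of trace in $[3,\tau_0)$ via Proposition~\ref{prp_counting}, then deduce (iii) and (i) from (ii) together with $|\calH^{(t)}|\geq 2(n-T)\geq n^{a+c}$. You have simply made explicit the injectivity step (that in a trivalent ribbon graph a starting half-edge and an $LR$-word determine at most one non-backtracking path, hence at most one endpoint) that the paper leaves implicit.
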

\begin{proof}
    Let $h \in \calH^{(t)}$.
    Proposition~\ref{prp_counting} implies that there are at most $\bO(n^c\log n)$ half-edges that can be reached by a path of trace at most $\tau_0$ that starts from $h$. This implies ii.
    The condition that $t\leq T$ implies that
    \[
        |\calH^{(t)}|
        \geq
        2n - 2T
        \geq
        n^{a+c}        
        >
        0.
    \]
    Combining this with ii, shows that
    \[
        |\calA^{(t)}|
        \geq
        \frac{1}{2} |\calH^{(t)}| \big( |\calH^{(t)}| - \bO(n^c \log n) \big)
    \]
    and hence that $\calA^{(t)} \neq \varnothing$.
\end{proof}

Recall that $\calT$ is an ideal triangulation of the hyperbolic surface $S$ with zero sheer along its edges.
We say that the dual graph $\calG$ of
$\calT$ is \emph{safe} if the following two conditions hold:
\begin{enumerate}
\item for all pairs of half-edges $h_1, h_2$ of $\calG$, $\tau_{\calG}(h_1, h_2) \geq \tau_0$. In other words, all pairs of half-edges are available and
\item for any cusp on the boundary of the associated surface, the number of triangles incident to the cusp is at least $\sqrt{\tau_0-2}$. In other words, if a pair of half-edges $h,h'$ of $\calG$ can be connected by a word of the form $L^k$ or $R^k$, then $k\geq \sqrt{\tau_0-2}$.
\end{enumerate}

Recall that $\calG^{(t)}$ is the dual graph of the triangulation $\calT^{(t)}$ obtained from $\calT^{(0)}$ after $t$ gluings.
The following lemma is the analogue of \cite[Lemma~2.5]{LinialSimkin}:
\begin{lem}\label{lem:safe}
    If $\calG^{(t)}$ is safe for some $t$, then $\calG^{(t+1)}$ is also safe with certainty, hence the process saturates with certainty.
\end{lem}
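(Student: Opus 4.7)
The plan is to verify the two conditions in the definition of safety for $\calG^{(t+1)}$ assuming $\calG^{(t)}$ is safe; saturation then follows by induction, since safety makes every remaining pair of half-edges available, so the process runs until all $2n$ half-edges are glued, yielding $t_{\max}=n$.

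Write $e_{\mathrm{new}}$ for the edge produced by gluing $h_1,h_2$. The heart of the argument is the following decomposition. Any path $\gamma$ between two remaining half-edges $h,h'$ in $\calG^{(t+1)}$ that crosses $e_{\mathrm{new}}$ exactly $k\geq 0$ times splits into $k+1$ sub-walks $\gamma_0,\ldots,\gamma_k$ in $\calG^{(t)}$ whose endpoints lie in $\{h,h',h_1,h_2\}$. Because the gluing is zero-shear, the letter recorded at $v_1$ or $v_2$ when $\gamma$ crosses $e_{\mathrm{new}}$ coincides with the letter recorded when a sub-walk ends at $h_1$ or $h_2$ in $\calG^{(t)}$; hence $w(\gamma)=w(\gamma_0)w(\gamma_1)\cdots w(\gamma_k)$ with no extra or missing letters. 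Cyclicity of the trace combined with iterated application of \lemref{lem_traceincrease} then gives $\tr(w(\gamma))\geq\tr(w(\gamma_j))$ for every $j$.

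For condition (1), suppose $\tr(w(\gamma))>2$ and aim for $\tr(w(\gamma))\geq\tau_0$. The case $k=0$ is immediate from safety of $\calG^{(t)}$. For $k\geq 1$, either some $\tr(w(\gamma_j))\geq\tau_0$ and we are done, or every $w(\gamma_j)$ has trace $\leq 2$ and is therefore a pure power $L^{a_j}$ or $R^{a_j}$. In this latter case I first rule out intermediate loops from some $h_i$ back to $h_i$: a pure-power walk stays on a single $L$- or $R$-cycle, and the cycle starting from a boundary half-edge $h_i$ is a boundary cusp path terminating at the \emph{opposite} boundary half-edge of that cusp, so it cannot close up at $v_i$. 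Hence every $\gamma_j$ is a path between two distinct half-edges lying on a common boundary cusp, and condition (2) of safety forces each $a_j\geq\sqrt{\tau_0-2}$. If all $\gamma_j$ use the same letter, $w(\gamma)$ is itself a pure power of trace $2$, contradicting $\tr(w(\gamma))>2$. Otherwise $w(\gamma)$ contains a factor $L^a R^b$ (or $R^a L^b$) with $a,b\geq\sqrt{\tau_0-2}$, and one more use of cyclicity and \lemref{lem_traceincrease} yields
\[
\tr(w(\gamma)) \;\geq\; \tr(L^a R^b) \;=\; ab+2 \;\geq\; \tau_0.
\]

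For condition (2), gluing $h_1$ to $h_2$ only merges cusps pairwise at the two endpoints of $e_{\mathrm{new}}$ and never splits any cusp; every boundary cusp of $X_n^{(t+1)}$ is therefore either inherited unchanged from $X_n^{(t)}$ or is the merger of two $X_n^{(t)}$-boundary cusps, and in both cases carries at least $\sqrt{\tau_0-2}$ triangles. The main obstacle I expect is the ``intermediate loop'' case in the decomposition: sub-walks $\gamma_j$ starting and ending at the same $h_i$ arise whenever $\gamma$ crosses $e_{\mathrm{new}}$ twice in opposite directions. The pure-power subcase is handled as above, but to rule out non-pure-power intermediate loops of trace strictly between $2$ and $\tau_0$ one needs the stronger fact that $X_n^{(t)}$ has no short closed geodesic — an invariant that is not literally part of safety but is carried along inductively throughout the process, preserved at each step precisely by the constraint $\tau_{\calG^{(t)}}(h_1,h_2)\geq\tau_0$ enforced on every gluing.
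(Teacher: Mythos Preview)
Your approach is more elaborate than the paper's. The paper breaks the offending path $\gamma$ at a \emph{single} crossing of the new edge into two pieces $\alpha,\beta$, each running from one of the endpoints $h_1,h_2\in\calH^{(t+1)}$ to one of the glued half-edges $h_1',h_2'\in\calH^{(t)}\smallsetminus\calH^{(t+1)}$. Both pieces therefore connect \emph{distinct} half-edges of $\calG^{(t)}$, and safety~(1) applies directly: if either contains both an $L$ and an $R$, its trace exceeds $2$ and hence is at least $\tau_0$, contradicting $\tr(w(\gamma))<\tau_0$; otherwise one is $L^n$, the other $R^m$, and condition~(2) gives $n,m\geq\sqrt{\tau_0-2}$, whence $\tr(w(\gamma))=nm+2\geq\tau_0$.

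Your full decomposition at every crossing is more honest about the possibility that $\gamma$ traverses $e_{\mathrm{new}}$ several times (which the paper glosses over), but it forces you to deal with intermediate sub-walks $\gamma_j$ that may start and end at the \emph{same} glued half-edge $h_i$. Your stated dichotomy ``either some $\tr(w(\gamma_j))\geq\tau_0$, or all have trace $\leq 2$'' is not justified for such loops: safety~(1) only covers pairs of distinct half-edges, so a loop sub-walk with trace strictly between $2$ and $\tau_0$ is not ruled out. The patch you invoke at the end --- that $X_n^{(t)}$ has no closed geodesic of trace below $\tau_0$ --- is a process invariant, not a consequence of safety. What you have actually proved, then, is that safety \emph{together with} systole $\geq\sigma_0$ propagates from step $t$ to step $t+1$; this suffices for the application, but it is not the lemma as stated. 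The paper's single-break argument sidesteps the whole issue, since neither $\alpha$ nor $\beta$ can be a loop.
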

\begin{proof}
    We shall prove (by contradiction) that if $\calG^{(t)}$ is safe, then $\calG^{(t+1)}$ is also safe, which implies the assertion.

    Suppose that $\calG^{(t)}$ is safe but $\calG^{(t+1)}$ is not, i.e.,
    \begin{itemize}
    \item either there exist $h_1, h_2 \in \calH^{(t+1)}$ such that $\tau_{\calG^{(t+1)}}(h_1, h_2) < \tau_0$.
    \item or there is a left hand turn path going between two half-edges of $\calG^{(t+1)}$ whose combinatorial length (the number of vertices it goes through) is below $\sqrt{\tau_0-2}$.
    \end{itemize}
    Let $\gamma$ be a path in $\calG^{(t+1)}$ that either realizes one of these two inequalities.
    
    The hypothesis that $\calG^{(t)}$ is safe implies that $\gamma$ passes through two half-edges $h_1', h_2' \in \calH^{(t)}$ such that $h_1'$ and $h_2'$ are connected at step $t$, and if we break $\gamma$ at the edge $\{h_1', h_2'\}$, we get two subpaths of $\gamma$: $\alpha$ from $h_1$ to $h_1'$, and $\beta$ from $h_2$ to $h_2'$.
    Both $\alpha$ and $\beta$ are contained in $\calG^{(t)}$.

    First suppose $\gamma$ is a left hand turn path, then both $\alpha$ and $\beta$ are left hand turn paths. Their combinatorial lengths are less than that of $\gamma$, 
     which contradicts the assumption that $\calG^{(t)}$ is safe.
    
    If $\gamma$ is not, then if at least one of $\alpha$ and $\beta$ contains both an $L$ and an $R$, say $\alpha$, then by Lemma~\ref{lem_traceincrease} the trace length of $\alpha$ is less than that of $\gamma$, a contradiction. 
    Finally, without loss of generality, suppose $\alpha$ carries the word $L^n$ and $\beta$ carries the word $R^m$ and $n \leq m$. But then
    \[
    n^2+2\leq mn+2 = \tr(L^n R^m)= \tr(w(\gamma)) = \tau_{\calG^{(t+1)}}(h_1,h_2)<\tau_0,
   \] which means that the cusp corresponding to $\alpha$ on the boundary of the surface corresponding to $\calG^{(t)}$ has two few triangles in it, which is inconsistent with the safety of $\calG^{(t)}$.
\end{proof}

It will be convenient to -- like Linial and Simkin -- use the complete graph $K_{2n}$ on the vertex set of $\calG^{(0)}$, as a graph that contains all the paths that potentially appear in our dual graphs $\calG^{(t)}$ and divide the edges of $K_{2n}$ into two types: chords and the edges of $\calG^{(0)}$:

\begin{dff}
    A \emph{chord} is an edge of $K_{2n}$ that is not an edge of $\calG^{(0)}$.
\end{dff}

Recall that the $t$-th step of the process (also referred to as the operation at time $t$) is carried out on $\calG^{(t)}$, and the outcome is $\calG^{(t+1)}$. Recall also that $\calH^{(t)}$ is the set of half-edges of $\calG^{(t)}$.
$T := n - \lceil n^{a+c} \rceil$ where $c < 2/9$, $a + c < 1$ and $a > c/2$. The following lemma corresponds to \cite[Claim 2.6]{LinialSimkin}, and is the reason that our constants are worse.
\begin{lem}\label{lem:2.6}
    Let
    \begin{itemize}
        \item
            $k$ be an integer such that $k = \bO(\tau_0) = \bO(n^{c/2})$,

        \item
            $s_1,\dots,s_k$ be distinct chords,

        \item
            $0 \leq t_1, \dots, t_k \leq T$ be integers,

        \item
            $U \subset \calH^{(0)}$ be a set of half-edges of size $\card{U} = \bO(\tau_0) = \bO(n^{c/2})$,

        \item
            $E$ be the event where $U \subset \calH^{(T+1)}$, and for each $0 \leq i \leq k$, the chord $s_i$ is selected at time $t_i$.
    \end{itemize}
    
    Then, when $n \to \infty$,
    \[
        \PP(E)
        \leq
        \left( \frac{1}{2n^2} \right)^k \left( 1 - \frac{T}{n} \right)^{\card{U}} (1 + \lo(1)).
    \]
\end{lem}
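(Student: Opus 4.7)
The plan is to express $\PP(E)$ as a product of $T+1$ conditional probabilities using the natural filtration of the process. At step $t$ the selected pair is uniform in $\calA^{(t)}$. When $t=t_i$, I will bound the conditional probability of drawing exactly $s_i$ by $1/|\calA^{(t_i)}|$; when $t \notin \{t_1,\dots,t_k\}$, I will bound the conditional probability of not hitting $U$ by $1 - f(t)$, where $f(t)$ denotes the fraction of pairs in $\calA^{(t)}$ incident to $U$.

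For the chord factors I apply Lemma~\ref{lem:2.3}(iii) combined with the lower bound $|\calH^{(t)}| \geq 2\lceil n^{a+c}\rceil$ valid for $t \leq T$, which gives $|\calA^{(t_i)}| = 2(n-t_i)^2(1-\lo(1))$ and hence $1/|\calA^{(t_i)}| \leq (1+\lo(1))/(2(n-t_i)^2)$. For the survival factors I apply Lemma~\ref{lem:2.3}(ii): each $h \in U$ has at least $|\calH^{(t)}| - \bO(n^c \log n)$ admissible partners. Summing over $h \in U$ and subtracting the $\bO(|U|^2) = \bO(n^c)$ double-counted pairs wholly inside $U$ (negligible thanks to $a > c/2$), I obtain at least $|U|\,|\calH^{(t)}|(1-\lo(1))$ pairs in $\calA^{(t)}$ touching $U$. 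Dividing by $|\calA^{(t)}| \leq |\calH^{(t)}|^2/2$ yields $f(t) \geq 2|U|/|\calH^{(t)}|(1-\lo(1))$, so $1-f(t) \leq 1 - 2|U|/|\calH^{(t)}|(1-\lo(1))$.

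Next I multiply the factors, apply $1-x \leq e^{-x}$, and compute the key sum
\[
\sum_{t=1}^{T+1} \frac{1}{|\calH^{(t)}|} \;=\; \tfrac{1}{2}(H_{n-1} - H_{n-T-2}) \;=\; \tfrac{1}{2}\log\frac{n}{n-T}\,(1+\lo(1)),
\]
using $H_m = \log m + \gamma + O(1/m)$. The contribution from the $k$ excluded times $t_i$ is bounded by $\bO(k/|\calH^{(T)}|) = \bO(n^{-a-c/2}) = \lo(1)$ by $a > c/2$ and $k = \bO(n^{c/2})$, so the truncated sum is asymptotically identical to the full one. Exponentiation then yields the survival factor $(1-T/n)^{|U|}(1+\lo(1))$; the remaining $(1+\lo(1))^{|U|}$ error is absorbed into $(1+\lo(1))$, again using $a > c/2$ together with $a+c < 1$.

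The hard part will be packaging the chord contribution $\prod_i 1/(2(n-t_i)^2)$ with the survival factor to recover the advertised $\bigl(1/(2n^2)\bigr)^k (1-T/n)^{|U|}$. Since $n-t_i \leq n$, each individual chord factor $1/(2(n-t_i)^2)$ generically exceeds $1/(2n^2)$, so the stated bound is sharper than the naive product above and requires a finer joint accounting: the ``missing'' survival factor $1 - 2|U|/|\calH^{(t_i)}|$ at each $t_i$ must compensate for the excess $n^2/(n-t_i)^2$ hiding in the chord factor. Making this compensation rigorous and uniform in the admissible choices of $(t_1,\ldots,t_k)$ is the crux of the proof.
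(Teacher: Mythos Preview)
Your decomposition is essentially right, and you have correctly identified the crux: the product $\prod_i 1/(2(n-t_i)^2)$ can vastly exceed $(1/(2n^2))^k$ (by a factor as large as $n^{2k(1-a-c)}$ when the $t_i$ are near $T$), and this must be compensated somewhere. But your proposed compensation --- the single ``missing'' survival factor $1-2|U|/|\calH^{(t_i)}|$ at each of the $k$ chord times --- is far too weak: each such factor is $1-\lo(1)$, so together they contribute only $1+\lo(1)$, not the required $\prod_i (n-t_i)^2/n^2$.

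The missing idea is that your survival set is too small. For $s_i$ to be selected at time $t_i$, its two endpoints must still be unsaturated at every time $t<t_i$; hence at each non-chord time $t$ the process must avoid not just $U$ but also the endpoints of every chord $s_j$ with $t_j>t$. The paper encodes this with the time-dependent set
\[
U_t \;=\; U \cup \bigl\{\text{endpoints of } s_j : t_j>t\bigr\},
\qquad |U_t| = |U| + 2\,|\{j:t_j>t\}|,
\]
and bounds the conditional probability at $t\notin\{t_1,\dots,t_k\}$ by $1 - 2|U_t|/|\calH^{(t)}| + (\text{error})$. The extra terms then give, after the $1-x\leq e^{-x}$ step,
\[
\sum_{t=0}^{T}\frac{2|U_t|}{|\calH^{(t)}|}
\;=\;
|U|\sum_{t=0}^{T}\frac{1}{n-t}
\;+\;
\sum_{i=1}^{k}\sum_{t=0}^{t_i-1}\frac{2}{n-t}
\;\geq\;
|U|\log\frac{n}{n-T} \;+\; 2\sum_{i=1}^{k}\log\frac{n}{n-t_i+1},
\]
and the second sum exponentiates to $\prod_i (n-t_i)^2/n^2$ (up to $1+\lo(1)$), which is exactly the compensation you were looking for. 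Once you enlarge $U$ to $U_t$ in your survival step, the rest of your outline goes through essentially as you wrote it.
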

\begin{proof}
    For convenience let us start by setting some notation.
    Let $t \in \ZZ_{\geq 0}$.
    We denote by
    \begin{itemize}
        \item
            $S_{>t} \coloneqq \{s_i : t_i > t\}$ the set of chords that are to be chosen after time $t$ for the event $E$ to occur,

        \item
            $U_t \coloneqq U \cup \{u : u \in s \text{ for some }s\in S_{>t}\}$ the set of half-edges to be avoided at time $t$ for the event $E$ to occur,

        \item
            $B_t$ the event that
            \begin{itemize}
            \item[-] if $t=t_i$ for some $i$, then $s_i$ is chosen at time $t$; 
            \item[-] otherwise, the chord picked at time $t$  is not incident to $U_t$.
            \end{itemize}
    \end{itemize}
    With this notation in place, we have
    \[
        E = \bigcap_{0 \leq t \leq T}B_t
    \]
    and hence
    \[
        \PP(E)
        =
        \PP(B_0) \prod_{0 \leq t \leq T} \PP(B_{t} \mid B_{t-1}, \dots, B_0).
    \]
    At time $t=t_i$, the chord $s_i$ should be picked among $|\calA^{(t)}|$ available edges.
    By Lemma~\ref{lem:2.3},
    \[
        |\calA^{(t)}|
        \geq
        \frac{|\calH^{(t)}|^2}{2} \left( 1 - \frac{\bO(n^c\log n)}{|\calH^{(t)}|} \right),
    \]
    and therefore, for $1 \leq i \leq k$,
    \begin{equation}\label{eq:ti}
        \PP(B_{t_i} \mid B_{t_i-1},\dots,B_0)
        = \frac{1}{\card{\calA^{(t_i)}}}
        \leq 
        \frac{2}{|\calH^{(t_i)}|^2} \left( 1 + \frac{\bO(n^c \log n)}{| \calH^{(t_i)} |} \right).
    \end{equation}

    At time $t \in \{ 0, \dots, T \} \smallsetminus \{ t_1, \dots, t_k \}$, all chords that touch $U_t$ should be avoided.
    Lemma~\ref{lem:2.3}.ii implies that every half-edge in $U_t$ is contained in at least $|\calH^{(t)}| - \bO(n^c\log n)$ available edges in $K_{2n}$.
    Therefore, $U_t$ is incident to at least
    \[
        \card{U_t} (|\calH^{(t)}| - \bO(n^c \log n)) - \binom{\card{U_t}}{2}
        =
        |U_t| |\calH^{(t)}| \left( 1 - \frac{\bO(n^c \log n)}{|\calH^{(t)}|} \right)
    \]
    available edges (we have used the fact that $|U_{t}| \leq |U| + k = \bO(n^{c/2})$ for all $0 \leq t \leq T $).
    Hence, for any $t \in \{ 0, \dots, T \} \smallsetminus \{ t_1, \dots, t_k \}$, we have
    \begin{equation}\label{eq:neqti}
        \begin{split}
            \PP(B_t \mid B_{t-1}, \dots, B_0)
            & \leq
            1 - \frac{|U_t| |\calH^{(t)}|}{|\calA^{(t)}|}\left( 1 - \frac{\bO(n^c \log n)}{|\calH^{(t)}|} \right) \\
            & \leq
            1 - \frac{2|U_t|}{|\calH^{(t)}|} + \frac{\bO(n^c \log n) |U_t|}{|\calH^{(t)}|^2}
        \end{split}
    \end{equation}
    where we have used $|\calA^{(t)}| \leq \binom{|\calH^{(t)}|}{2} \leq \frac{|\calH^{(t)}|^2}{2}$.
    We claim that
\begin{equation}\label{eq:PE1}
        \PP(E)
        \leq
        \left( \prod_{i=1}^{k} \frac{2}{|\calH^{(t_i)}|^2} \right)
        \Bigg( \prod_{t = 0}^{T} \left( 1 - \frac{2|U_t|}{|\calH^{(t)}|} + \frac{\bO(n^c \log n) |U_t|}{|\calH^{(t)}|^2} \right) \Bigg)
        (1 + \lo(1)).
    \end{equation}
    This estimate follows from \eqref{eq:ti}, \eqref{eq:neqti}, and the following two facts.
    First,
    \begin{multline}\label{eq:ti_o}
        \prod_{i=1}^{k} \left( 1 + \frac{\bO(n^c \log n)}{| \calH^{(t_i)} |} \right)
        =
        \exp \mathopen{}\left( \sum_{i=1}^{k} \log \mathopen{}\left( 1 + \frac{\bO(n^c \log n)}{| \calH^{(t_i)} |} \right)\mathclose{} \right)\mathclose{} \\
        \leq
        \exp \mathopen{}\left( \sum_{i=1}^{k} \frac{\bO(n^c \log n)}{| \calH^{(t_i)} |} \right)\mathclose{}
        \leq
        \exp( k \, \bO(n^{-a} \log n))
        =
        \exp(\bO(n^{c/2-a} \log n))
        =
        1 + \lo(1),
    \end{multline}
    where we need the assumption $a > c/2$ which is stronger than that in \cite{LinialSimkin};
    second,
    \begin{equation}\label{eq:neqti_o}
        \prod_{i=1}^{k}
        \left( 1 - \frac{2|U_{t_i}|}{|\calH^{(t_i)}|} + \frac{\bO(n^c \log n) |U_{t_i}|}{|\calH^{(t_i)}|^2} \right)
        \geq
        1 + \lo(1),
    \end{equation}
    which can be deduced in a similar way.

    For the second product in \eqref{eq:PE1}, we have
    \begin{multline*}
        \prod_{t=0}^{T}
        \left( 1 - \frac{2|U_t|}{|\calH^{(t)}|} + \frac{\bO(n^c \log n) |U_t|}{|\calH^{(t)}|^2} \right)
        =
        \exp \mathopen{}\left( \sum_{t=0}^{T} \log \mathopen{}\left( 1 - \frac{2|U_t|}{|\calH^{(t)}|} + \frac{\bO(n^c \log n) |U_t|}{|\calH^{(t)}|^2} \right)\mathclose{} \right)\mathclose{} \\
        \leq
        \exp \mathopen{}\left( - \sum_{t=0}^{T} \frac{2 |U_t|}{|\calH^{(t)}|} \right)\mathclose{} \exp \mathopen{}\left( \sum_{t=0}^{T} \frac{\bO(n^c \log n) |U_t|}{|\calH^{(t)}|^2} \right)\mathclose{}
    \end{multline*}
    where
    \begin{multline*}
        \sum_{t = 0}^{T} \frac{\bO(n^c \log n) |U_t|}{|\calH^{(t)}|^2}
        =
        \bO(n^{3c/2} \log n) \sum_{t=0}^{T} \frac{1}{|\calH^{(t)}|^2} \\
        \leq
        \bO(n^{3c/2} \log n) \sum_{i=n-T}^{n} \frac{1}{i^2}
        =
        \bO(n^{3c/2-(a+c)} \log n)
        =
        \bO(n^{-a+c/2} \log n)
        =
        \lo(1)
    \end{multline*}
    where once again we need the assumption $a > c/2$.
    Thus,
    \begin{equation}\label{eq:PE2}
        \prod_{t=0}^{T}
        \left( 1 - \frac{2|U_t|}{|\calH^{(t)}|} + \frac{\bO(n^c \log n) |U_t|}{|\calH^{(t)}|^2} \right)
        =
        \exp \mathopen{}\left( - \sum_{t=0}^{T} \frac{2 |U_t|}{|\calH^{(t)}|} \right)\mathclose{} (1+\lo(1)).
    \end{equation}
    As $|U_t|$ can be written as $|U| + 2|S_{>t}|$, we have
    \begin{equation}\label{eq:PE3}
        \sum_{t=0}^{T} \frac{2|U_t|}{|\calH^{(t)}|}
        =
        |U| \sum_{t=0}^{T} \frac{1}{n-t} + \sum_{i=1}^{k} \sum_{t=0}^{t_i-1} \frac{2}{n-t}
        \geq
        |U| \log \frac{n}{n-T} + 2\sum_{i=1}^{k} \log \frac{n}{n-t_i+1}.
    \end{equation}
    Now, combining \eqref{eq:PE1}, \eqref{eq:PE2}, and \eqref{eq:PE3}, we obtain
    \begin{multline*}
        \PP(E)
        \leq
        \left( \prod_{i=1}^{k} \frac{2}{(2n-2t_i)^2} \right) \,
        \exp \mathopen{}\left( - |U| \log \frac{n}{n-T} - 2\sum_{i=1}^{k}\log \frac{n}{n-t_i+1} \right)\mathclose{} (1+\lo(1)) \\
        =
        \left( \frac{1}{2n^2} \right)^k \left( 1 - \frac{T}{n} \right)^{\card{U}} (1 + \lo(1)),
    \end{multline*}
    as we claimed.
\end{proof}

\begin{rem}
    Our multiplicative constant $c$ becomes worse starting from this lemma, essentially because a path with hyperbolic length $c \log(n)$ can have combinatorial length of the order $n^{c/2}$.
    Consequently, we have to allow $k$ in the previous lemma to vary from $0$ to $\bO(n^{c/2})$, rather than $\log(n)$ as in \cite{LinialSimkin}.
    As a result, roughly speaking, we have to kill the factor $n^{c/2}$ with $n^{-a}$, and so we need the additional assumptions $a > c/2$.
    While in \cite{LinialSimkin}, the factor $\log n$ can be killed by $n^{-a}$ with $a$ being arbitrarily small.
\end{rem}

The following corollary of the preceding lemma, is the analogue of \cite[Lemma~2.7]{LinialSimkin}.
\begin{lem}\label{lem_probability_bound_k_chords}
    Let $S$ be a set of $k \leq n^{c/2}$ chords, $h_1, h_2 \in \calH^{(0)}$ be distinct half-edges, and let $E$ be the event where every chord in $S$ is chosen by the process, and $h_1, h_2$ are not saturated at time $T+1$.
    Then we have
    \[
        \PP(E)
        =
        \bO \mathopen{}\left( \frac{1}{(2n)^{k+2(1-c-a)}} \right)\mathclose{}.
    \]
\end{lem}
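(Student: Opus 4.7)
The plan is to decompose the event $E$ according to the exact times at which the $k$ chords of $S$ are selected, and then apply Lemma~\ref{lem:2.6} to each piece. Fix an enumeration $S = \{s_1, \ldots, s_k\}$. Since at each step the process selects at most one chord, distinct chords are selected at distinct times, so we have the disjoint decomposition
\[
E = \bigsqcup_{(t_1, \ldots, t_k)} E_{t_1, \ldots, t_k},
\]
ranging over injections $(t_1, \ldots, t_k)\colon\{1, \ldots, k\}\hookrightarrow\{0, 1, \ldots, T\}$, where $E_{t_1, \ldots, t_k}$ is the event that $s_i$ is selected at time $t_i$ for each $i$ and $h_1, h_2 \in \calH^{(T+1)}$.

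For each such tuple, I would apply Lemma~\ref{lem:2.6} with $U = \{h_1, h_2\}$. The hypotheses hold uniformly: $k \leq n^{c/2} = \bO(\tau_0)$ and $|U| = 2 = \bO(\tau_0)$. The lemma then yields
\[
\PP(E_{t_1, \ldots, t_k}) \leq \left( \frac{1}{2n^2} \right)^k \left( 1 - \frac{T}{n} \right)^2 (1 + \lo(1)),
\]
with the $(1+\lo(1))$ factor independent of the tuple. Since the number of injections is at most $(T+1)^k \leq n^k$, a union bound gives
\[
\PP(E) \leq n^k \left(\frac{1}{2n^2}\right)^k \left(1 - \frac{T}{n}\right)^2 (1+\lo(1)) = \frac{1}{(2n)^k}\left(1 - \frac{T}{n}\right)^2 (1+\lo(1)).
\]

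To finish, I would use the defining value $T = n - \lceil n^{a+c} \rceil$ together with the assumption $a+c < 1$ from \eqref{eq:ca} to get $1 - T/n = \bO(n^{a+c-1})$, so $(1 - T/n)^2 = \bO(n^{-2(1-a-c)})$. Substituting yields
\[
\PP(E) = \bO\left(\frac{1}{(2n)^{k}\, n^{2(1-a-c)}}\right) = \bO\left(\frac{1}{(2n)^{k+2(1-a-c)}}\right),
\]
after absorbing the harmless constant $2^{2(1-a-c)}$ into the $\bO$. This is the claim. I do not anticipate any real obstacle: the proof is essentially a union bound fed by Lemma~\ref{lem:2.6}, and the only care needed is to verify that its hypotheses apply uniformly across all admissible time tuples, which is immediate from the bounds on $k$ and $|U|$.
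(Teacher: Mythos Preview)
Your proof is correct and follows essentially the same route as the paper: decompose $E$ over the possible time assignments for the chords in $S$, apply Lemma~\ref{lem:2.6} with $U=\{h_1,h_2\}$ to each piece, bound the number of tuples by $n^k$, and substitute $1-T/n=\bO(n^{a+c-1})$. The only cosmetic difference is that the paper sums over size-$k$ subsets $I\subset\{0,\dots,T\}$ rather than injections, but both are bounded by $\bO(n^k)$ and lead to the same estimate.
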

\begin{proof}
    It follows from Lemma~\ref{lem:2.6} that
    \begin{multline*}
        \PP(E)
        \leq
        \sum_{\substack{I \subset\{0, \dots, T \} \\ \card{I} = k}} 
        \left( \frac{1}{2n^2} \right)^k \left( 1 - \frac{T}{n} \right)^2 (1 + \lo(1)) \\
        =
        \bO (T^k) \left( \frac{1}{2n^2} \right)^k \left( 1 - \frac{n-n^{a+c}}{n} \right)^2
        =
        \frac{\bO(n^k)}{(2n^2)^k} \, (n^{a-c-1})^2,
    \end{multline*}
    as claimed.
\end{proof}

In order to be able to speak of paths that might turn into short geodesics in the process we set the following definition:

\begin{dff}
Let $r \geq 3$ be an integer.
A path $P$ in $K_{2n}$ is said to be $r$-\emph{threatening} if
\begin{itemize}
\item the first and the last edges of $P$ are not chords;
\item no two consecutive edges in $P$ are chords;
\item if $P$ carries the word $L^s$ or $R^s$ for some $s$, then $s^2+2 \leq r$, otherwise $\tr(w(P)) \leq r$.
\end{itemize} 
\end{dff}

\begin{lem}\label{lem_number_threatening_paths}
    For any integer $r \geq 3$,
    the number of $r$-threatening paths with $k$ chords in $K_{2n}$ is $\bO(r^2 \log(r) \, (2n)^{k+1})$.
\end{lem}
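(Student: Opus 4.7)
The natural strategy is first to enumerate the possible words that an $r$-threatening path can carry, and then, for each such word, to count the paths in $K_{2n}$ with $k$ chords that carry it.

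For the word count, Lemma~\ref{lem_tr_lower_bound} implies that any word $w$ with $3\leq\tr(w)\leq r$ has at most $r-1$ letters, and Proposition~\ref{prp_counting} bounds the number of such words by $\bO(r^2\log r)$. The remaining threatening words are of the form $L^s$ or $R^s$ with $s^2+2\leq r$; there are at most $2\lfloor\sqrt{r-2}\rfloor = \bO(\sqrt r)$ of them, each of length at most $\sqrt{r-2}$. So the total number of threatening words is $\bO(r^2\log r)$, each of length at most $r-1$.

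Next, I fix a threatening word $w$ of length $\ell$ and count the paths in $K_{2n}$ carrying $w$ with $k$ chords. Such a path has $\ell+1$ edges. It is specified by its starting vertex $v_0$ (at most $2n$ choices), the initial outgoing half-edge at $v_0$ ($2$ choices, corresponding to the two ``real'' half-edges of $v_0$ in $\calG^{(0)}$, which ensures the first edge is not a chord), together with the data needed to propagate along $w$. At each interior vertex, the incoming half-edge is determined by the previous edge, and the current letter of $w$, combined with the cyclic order at the vertex, picks the outgoing half-edge. If this outgoing half-edge is one of the two real half-edges of the vertex, the next edge is a $\calG^{(0)}$-edge and the next vertex is forced; if it is the unique ``free'' half-edge of the vertex, then the next edge is a chord, and one has at most $2n$ choices for its other endpoint. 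Because each vertex has a single free half-edge, the condition that no two consecutive edges are chords is automatic; the condition that the last edge is not a chord only restricts the set of valid paths, so we can safely ignore it for an upper bound. Each chord thus contributes a factor of at most $2n$, giving $\bO((2n)^{k+1})$ paths per word.

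Combining the two estimates yields the desired bound $\bO(r^2\log(r)\,(2n)^{k+1})$. The main subtlety lies in the second step: one must correctly track how the letters of $w$, combined with the local trivalent structure at each vertex of $\calG^{(0)}$, determine which edges of the path are chords and how much freedom remains at each chord endpoint, and check that once $v_0$ and the initial half-edge are fixed, the only remaining degrees of freedom are the choices of chord targets.
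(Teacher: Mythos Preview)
Your proof is correct and takes essentially the same approach as the paper: first bound the number of admissible words by $\bO(r^2\log r)$ via Proposition~\ref{prp_counting} (plus the $\bO(\sqrt r)$ words of the form $L^s$, $R^s$), and then, for a fixed word, observe that a path is determined by its starting data together with the $k$ chord targets, giving $(2n)^{k+1}$. The only cosmetic difference is that the paper parametrizes the start by a single free half-edge ($2n$ choices) rather than a vertex plus an outgoing real half-edge; since the first letter of the word already determines that outgoing half-edge, your extra factor of $2$ is harmless for the $\bO$-bound.
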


\begin{proof}
    To count these paths, we first argue in terms of trace and then we sum over the number of possible traces. 
    
    A threatening path is uniquely determined by its starting half-edge and its word. We first bound the number words that can arise. Either it is a word whose trace lies between $3$ and $r$ or it is a word of the form $L^s$ or $R^s$ for some $s$, determined by the trace length.
    For the former there are $\bO(r^2 \log r)$ choices by Proposition~\ref{prp_counting}.
    For the latter, there are $\bO(\sqrt{r})$ choices (and $2$ per $s$).
    In total this makes for $\bO(r^2 \log r)$ possibilities.
    
    Once the word is given, we need a starting half-edge, for which there are $2n$ choices.
    The combination of the word and this half-edge determines the first half-edge of the first chord.
    So, for this first chord, we need to choose a second half-edge, which gives us $\leq 2n$ choices, after which the first half-edge of the second chord is determined by the word again. Repeating this process leads to an upper bound of $(2n)^{k+1}$ for the choices of all the half-edges involved. After these choices, the path is determined.
\end{proof}

Given a graph $\calG$ and an integer $r \geq 3$, we denote by $P_r(\calG)$ the number of $r$-threatening paths in $\calG$.
\begin{lem}\label{lem_threatening_path_bounds}
    For any $3 \leq r \leq \tau_0$, we have
    \[
        P_r(\calG^{(T+1)})
        \leq
        (2n)^{-1+2c+2a} \, r^3 \log r \cdot \log n
    \]
    with high probability.
\end{lem}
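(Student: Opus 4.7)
The plan is to apply the first moment method: estimate $\EE[P_r(\calG^{(T+1)})]$ by linearity of expectation, then conclude with Markov's inequality.

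For a fixed $r$-threatening path $P$ in $K_{2n}$ with $k$ chord edges and endpoint half-edges $h_1, h_2 \in \calH^{(0)}$, the path $P$ lies in $\calG^{(T+1)}$ precisely on the event that all $k$ chords of $P$ are selected by time $T+1$ and that $h_1, h_2$ remain unsaturated at time $T+1$. This is exactly the event considered in Lemma~\ref{lem_probability_bound_k_chords}. Before invoking that lemma, I first check that $k \leq n^{c/2}$: by Lemma~\ref{lem_tr_lower_bound}, any word $w$ on $m$ letters with $\tr(w) > 2$ satisfies $\tr(w) \geq m+1$, so the trace bound $\tr(w(P)) \leq r$ forces $m \leq r - 1$; and in the degenerate $L^s$/$R^s$ case the defining inequality $s^2 + 2 \leq r$ gives $s \leq \sqrt{r}$. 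In either case $k \leq m + 1 \leq r \leq \tau_0 \leq n^{c/2}$, so Lemma~\ref{lem_probability_bound_k_chords} applies and yields $\PP[P \subset \calG^{(T+1)}] = \bO\bigl((2n)^{-k - 2(1-c-a)}\bigr)$.

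Combining this with the count of $r$-threatening paths with $k$ chords from Lemma~\ref{lem_number_threatening_paths}, linearity of expectation gives
\[
\EE\bigl[P_r(\calG^{(T+1)})\bigr]
\leq
\sum_{k=0}^{r} \bO\bigl(r^{2} \log r \cdot (2n)^{k+1}\bigr) \cdot \bO\bigl((2n)^{-k - 2(1-c-a)}\bigr)
= \bO\bigl(r^{3} \log r \cdot (2n)^{-1 + 2c + 2a}\bigr),
\]
where the factor $r$ in front comes from summing over $k \in \{0, 1, \dots, r\}$ (each summand being of the same order, since the powers of $2n$ cancel). Markov's inequality then gives
\[
\PP\Bigl[P_r(\calG^{(T+1)}) > (2n)^{-1+2c+2a}\, r^{3} \log r \cdot \log n\Bigr]
= \bO\bigl(1/\log n\bigr) = \lo(1),
\]
as desired.

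The main point requiring care is the verification that the chord-count of any $r$-threatening path is $\bO(r)$ and hence within the regime $k \leq n^{c/2}$ in which Lemma~\ref{lem_probability_bound_k_chords} is valid; this is precisely where the trace-vs.-combinatorial-length bound from Lemma~\ref{lem_tr_lower_bound} enters, and it is the step that controls the interplay between the hyperbolic length used to define ``threatening'' and the combinatorial length that governs the probability estimate. Everything beyond this is a routine linearity-of-expectation and Markov calculation.
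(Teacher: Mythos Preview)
Your proof is correct and follows essentially the same approach as the paper: bound the number of chords in an $r$-threatening path by $\bO(r)$ via Lemma~\ref{lem_tr_lower_bound}, combine Lemmas~\ref{lem_probability_bound_k_chords} and~\ref{lem_number_threatening_paths} by linearity of expectation, sum over $k$, and finish with Markov's inequality. The only differences are cosmetic (you sum to $r$ instead of $r-1$ and spell out the $L^s/R^s$ case separately), and your explicit check that $k \leq n^{c/2}$ before invoking Lemma~\ref{lem_probability_bound_k_chords} is a welcome clarification.
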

\begin{proof}
    Let $P_{r,k}(\calG)$ denote the number of such paths that have $k$ chords.
   Using Lemma~\ref{lem_tr_lower_bound}, we get that the number of chords in a word of trace between $3$ and $r$ is at most $r-1$, and so
   \[
       \EE(P_r(\calG^{(T+1)}))
       =
       \sum_{k=0}^{r-1} \EE(P_{r,k}(\calG^{(T+1)})).
   \]
   Thus, by Lemma~\ref{lem_probability_bound_k_chords} and \ref{lem_number_threatening_paths}, we obtain
   \[
       \EE(P_r(\calG^{(T+1)}))
       \leq
       \sum_{k=0}^{r-1}
       \bO((2n)^{k+1} \, r^2 \log r) \cdot \bO((2n)^{-k-2(1-c-a)})
       =
       \bO((2n)^{-1+2c+2a} \, r^3 \log r).
   \]
   Thus, Markov's inequality implies
   \[
        \PP \left( P_r(\calG^{(T+1)}) \geq (2n)^{-1+2c+2a} \, r^3 \log r \cdot \log n \right)
        =
        \bO(1/\log n).
    \]
    This completes the proof.
\end{proof}

\begin{rem}
    Again, here the bound of $P_r(\calG^{T+1})$ is weaker than that in \cite{LinialSimkin} because the number of chords needs to vary from $0$ to $n^{c/2}$ instead of just up to $\log n$.
    This is another reason why our constant $c$ is worse that in \cite{LinialSimkin}.
\end{rem}

Now we are ready to prove the main result of the section.
\begin{proof}[Proof of Theorem~\ref{thm_linsim_rephrased}]
    By Lemma~\ref{lem_threatening_path_bounds}, we have, with high probability,
    \[
        P_{\tau_0}(\calG^{(T+1)})
        \leq
        \bO(\tau_0^3 \log \tau_0 \log n / n^{1-2c-2a})
        =
        \bO(n^{7c/2 + 2a - 1} \log^2 n).
    \]
    Under the assumptions \eqref{eq:ca} for $c$ and $a$, we can take $c < 2/9$ (and $a \in (c/2, 1/9)$), then we have
    \[
        \frac{7c}{2} + 2a - 1 < 0.
    \]
    As a result,
    \[
        P_{\tau_0}(\calG^{(T+1)})
        \leq
        \bO(n^{7c/2 + 2a - 1} \log^2 n)
        =
        \lo(1).
    \]
    Thus, $\calG^{(T+1)}$ is safe.
    Now the result follows from Lemma~\ref{lem:safe}.
\end{proof}

\begin{rem}
    We failed to make the ``nibbling'' part of \cite{LinialSimkin} work in the hyperbolic context.
    More precisely, the first significant obstacle occurs during our attempt to adapt the proof of \cite[Claim~2.12(c)]{LinialSimkin}.
    Roughly speaking, this is due to the following fact.
    In a graph, the total length of two paths of length $\ell_1$ and $\ell_2$ respectively is simply $\ell_1 + \ell_2$.
    In contrast to this, while for typical words $w_1$ and $w_2$ (in $L$ and $R$) we do expect that $\tr(w_1 w_2)$ should be close to $\tr(w_1) \cdot \tr(w_2)$, but in general, the best bound we have is $\tr(w_1) \leq \tr(w_1 w_2)$, which is far from being enough for our purposes.
\end{rem}

\subsection{The maximal systole problem}\label{sec_maxsys}

In this section we prove Theorem~\ref{thm_liminf}:
\begin{thmrep}{\ref*{thm_liminf}}
We have
\[
\liminf_{g\to\infty} \frac{\max\st{\sys(X)}{X\in\calM_g}}{\log(g)} \geq \frac{2}{9}.
\]
\end{thmrep}

The reason that Theorem~\ref{thm_main_linsim} does not directly imply it, is that we don't control the number of cusps of the resulting surface well enough. In particular, we can't immediately guarantee that we produce a surface of every genus. 

In order to remedy this issue, we modify the procedure in two ways:
\begin{enumerate}
\item We make it less random: as soon as $\calG^{(t)}$ is safe (the definition of which remains the same), we stop the random process and glue the remaining sides together, using any of the (typically many) combinatorial patterns that minimizes the resulting number of cusps.
\item During the random part of the process, we forbid creating
\begin{itemize}
\item cusps in the interior of the surface, and
\item boundary components containing only one side of a triangle
\end{itemize}
That is, we remove all pairs of half-edges $\{h_1,h_2\}$ that correspond to  sides on the boundary of the surface corresponding to $\calG^{(t)}$ that are either
\begin{itemize}
\item adjacent, or
\item have only one other side between them,
\end{itemize}
from the set of available edges $\calA^{(t)}$. 
\end{enumerate}

We first prove a topological lemma. Recall that we build our surface out of $2n$ ideal triangles.
\begin{lem}
Suppose $n$ is odd. If the modified procedure saturates (i.e.~yields a surface without boundary) then the resulting surface has genus $(n+1)/2$ and one cusp.
\end{lem}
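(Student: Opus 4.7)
The plan is to reduce the statement about the genus to a statement about the number of cusps via Euler's formula. The saturated surface has $F = 2n$ ideal triangles and $E = 3n$ ideal edges (every triangle side now glued to another), so $\chi = V - n$, where $V$ is the number of cusps. Equating with $2 - 2g$ gives $g = (n+2-V)/2$, so the claim $g = (n+1)/2$ is equivalent to showing $V = 1$. The integrality of $g$ forces $V \equiv n \pmod{2}$; since $n$ is odd, $V$ must be a positive odd integer and the trivial lower bound is $V \geq 1$.

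To obtain the matching upper bound $V \leq 1$, I would work with the dual ribbon graph. At every intermediate state of the procedure, a cusp of the current (possibly open) surface is either \emph{interior}, corresponding to a closed primitive left-turn cycle in the dual ribbon graph, or a \emph{boundary cusp}, corresponding to an open left-turn arc whose two endpoints lie at unmatched half-edges. The first modification built into the random phase forbids any gluing that would close an arc into a cycle, so at the instant the random phase stops the dual ribbon graph contains only open arcs and no closed cycles. From this point onward, each gluing of a pair of half-edges $h_1 \sim h_2$ performs two simultaneous operations on the arc-structure, one on each side of the new edge: each operation is either a \emph{merge} (joining two distinct open arcs into a longer one) or a \emph{close} (closing a single arc into a new cycle). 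Consequently, the number of cusps of the saturated surface equals the total number of close operations carried out during the completion phase.

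The strategy is then to schedule the completion so that every operation is a merge, except for one single close at the very end when only one arc remains. This produces exactly one final cycle and hence $V = 1$. The main obstacle is verifying that such a schedule is always realizable: at every intermediate step one must exhibit unmatched half-edges $(h_1,h_2)$ whose gluing performs no close. I would argue this by a direct analysis of the boundary components, using the second modification (the prohibition of boundary components containing only one triangle side) to eliminate the degenerate local configurations that would otherwise force a premature close. The odd parity of $n$ enters only to explain why the target $V=1$ is consistent with the parity constraint (if $n$ were even the analogous lower bound would be $V=2$, requiring two closes instead of one). Once the schedule is realized, $V=1$ and the Euler computation gives $g = (n+1)/2$ immediately.
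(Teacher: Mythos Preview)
Your overall architecture matches the paper's: reduce to cusp-counting via Euler's formula, use the parity constraint $V \equiv n \pmod 2$, and show that a small value of $V$ is achievable (then invoke modification~1, the minimality of the chosen completion---you leave this implicit but it should be stated). The difference lies in how the upper bound on $V$ is secured.

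The paper does not try to realise $V = 1$ by a direct merge schedule. Instead it argues in two concrete steps: (i) since at the moment the graph first becomes safe there are no interior cusps and no one-sided boundary components, one can glue sides from distinct boundary components until a single boundary polygon with $2s$ sides remains, creating no interior cusps along the way; (ii) the standard $4g'$-gon identification (or a small variant when $s$ is odd) then closes this polygon with at most two cusps. Thus some completion achieves $V\le 2$; minimality gives $V\le 2$ for the actual output; parity forces $V=1$. Each step here is classical and easy to check.

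Your route---scheduling so that every operation is a merge until a final close---is plausible, but you have not carried it out, and you flag the realisability of the schedule as the ``main obstacle''. That is precisely where all the content lies, and ``direct analysis of the boundary components'' is not yet an argument. Note also that the last gluing necessarily performs \emph{two} operations on the two remaining arcs, so what you really need is that those two arcs are linked (one merge then one close) rather than parallel (two closes); arranging this throughout the process is exactly what the paper's polygon argument sidesteps by aiming only for $V \le 2$ and letting parity finish the job.
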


\begin{proof}
We first prove that, if this new version of the process saturates, it produces a surface with at most two cusps.

Indeed, at the first time $t$ during which $\calG^{(t)}$ is safe, all the cusps of the corresponding surface are still on the boundary, due to the second modification above. In particular, at this point, the surface still has boundary. We don't know how many boundary components the surface has.  Using the fact that the surface does not have any boundary components containing only one side, we can however glue all components into a single boundary component without creating cusps in the interior of the surface.

So now we have a surface that has one boundary component with an even number, say $2s$, of sides on it.
We can glue these sides together two by two in such a way that the resulting surface has one or two cusps, depending on whether $2s$ is $0$ or $2$ modulo $4$, respectively:
if $s$ is even, then the standard presentation of the fundamental group of a closed genus $s/2$ surface as a $2s$-gon gives the desired gluing (all vertices are identified).
If $s$ is odd, we pick a pair of adjacent sides and glue them together.
Now the boundary component becomes a $(2s-2)$-gon, and we perform the same gluing as above to obtain a surface with two cusps.
By minimality (the first modification), the resulting surface will hence have at most two cusps.

In order to conclude that the surface has one cusp, we compute the Euler characteristic of its compactification, using the given triangulation and writing $C$ for the number of cusps:
\[
V-E+F = C -3n + 2n = C-n = 2-2g.
\]
Indeed, it follows from the last equality that the parity of $C$ and $n$ needs to be the same.
\end{proof}

Now we are ready to prove Theorem~\ref{thm_liminf}:
\begin{proof}[Proof of Theorem~\ref{thm_liminf}]
The lemma above says that, if the process saturates with high probability for all large enough odd $n$, we produce a surface of genus $g$ and one cusp, with systole $\geq c \cdot \log(g)$ for all large enough $g$. Using \cite[Lemma 3.1]{Brooks}, we can compactify these surfaces so as to obtain a sequence of closed surfaces of genus $g$ with systole $\geq (c + \lo(1))\cdot \log(g)$ for all large enough $g$. This proves that
\[
\liminf_{g\to\infty} \frac{\max\st{\sys(X)}{X\in\calM_g}}{\log(g)} \geq c.
\]

In order to prove that the process does saturate, we go through the same proof as in the previous section. We will just describe here what needs to be modified.

We again need to show, that with high probability, $\calG^{(T)}$ is safe. The only real change is in Lemma~\ref{lem:2.3}. The statement remains true, but we have modified the definition of available pairs of half-edges, so the proof needs to be adapted. Compared to the original process, the half-edges we now forbid, are the pairs of half-edges that define sides on the same boundary component at time $t$, such that this pair of sides has either no other sides or just one other side between them. The number such pairs is at most $4 \cdot |\calH^{(t)}|$. This in particular means that the lower bounds in items ii and iii of Lemma~\ref{lem:2.3} remain valid.

After this, we don't change the definition of safety, so Lemma~\ref{lem:safe} remains valid. The proof of Lemma~\ref{lem:2.6} uses Lemma~\ref{lem:2.3}, but not the definition of availability, so it goes through verbatim. Lemma~\ref{lem_probability_bound_k_chords}, only uses Lemma~\ref{lem:2.6}, which thus also remains in tact. We also don't change the definition of threatening paths, so  Lemmas~\ref{lem_number_threatening_paths} and \ref{lem_threatening_path_bounds}, as well as the proof of saturation (as long as $c<\frac{2}{9}$) go through as written.
\end{proof}

\subsection{Counting}\label{sec_counting_belyi}

Before we pass on to regular covers, we prove Theorem~\ref{thm:many}~(a), that we repeat here for convenience:

\begin{thmrep}{\ref*{thm:many}~(a)}
Fix any constant $c < \frac{2}{9}$. Then there are at least $ 
n^{n+\lo(n)}$
pairwise non-isometric cusped hyperbolic surfaces $X_n$ of area equal to $2\pi n$ and systole at least $c \log(\area(X_n))$ that can be obtained by gluing together $2n$ ideal hyperbolic triangles without shear.
\end{thmrep}

\begin{proof}
Our proof will use a combination of ideas and results due to Linial--Simkin \cite[Theorem~1.2]{LinialSimkin}, Belolipetsky--Gelander--Lubotzky--Shalev \cite{BGLS} and Lubotzky \cite{Lubotzky}. In order to use these ideas, we will first explain how the surfaces we produce relate to subgroups of $\Gamma=\PSL(2,\ZZ)$.

Indeed, all the surfaces we produce are (typically non-regular) covers of $\Gamma \backslash \HH^2$. To make this explicit, let $X$ be a connected, oriented and complete hyperbolic surface of finite area that is triangulated with $2n$ ideal triangles with gluings of shear $0$. Label the sides of these triangles with the numbers $1, \ldots, 6n$ (in such a way that each label appears exactly once). This allows us to write down two permutations $\sigma, \tau \in \sym_{6n}$, defined as follows:
\begin{itemize}
\item $\sigma$ is a product of $2n$ distinct $3$-cycles, each cycle containing the labels corresponding to one triangle, in the order that corresponds to the orientation of $X$.
\item $\tau$ is a product of $3n$ distinct $2$-cycles, each cycle containing the labels adjacent to a single edge (coming from the two triangles that are adjacent to it).
\end{itemize}
Observe that $\sigma$ and $\tau$ are of order three and two respectively. This means that there exists a unique homomorphism
\[
\varphi \colon \Gamma \simeq (\ZZ/2\ZZ) \ast (\ZZ/3\ZZ) \longrightarrow \sym_{6n}
\]
that maps the generator of order two of $\Gamma$ to $\tau$ and the generator of order three to $\sigma$. The fact that $X$ is connected implies that the subgroup $\langle \sigma,\tau\rangle < \sym_{6n}$ acts transitively on $\{1,\ldots,6n\}$ and hence that
\[
H \coloneqq \stab_\varphi(\{1\}) < \Gamma
\]
is a subgroup of index $6n$, thus yielding an orbifold cover of degree $6n$ of $\Gamma\backslash \HH^2$. We claim that this orbifold cover can be identified with $X$. Namely, an ideal triangle can be decomposed into three pairwise isometric fundamental domains for $\Gamma$. So we can make the labeling of $X$ correspond to copies of fundamental domains of $\Gamma$. $\stab_\varphi(\{1\})$ can then be identified with the fundamental group of $X$ based at fundamental domain labeled $1$. The fact that the shears of the triangulation are $0$ implies that the induced covering map $X\to\Gamma\backslash\HH^2$ is (homotopic to) the hyperbolic orbifold cover we described above.

The conclusion of the above is that in order to count the number of surfaces we produce with the construction from Theorem~\ref{thm_linsim_rephrased}, we can count how many subgroup of index $6n$ of $\Gamma$ we produce and afterwards count how many of these yield non-isometric surfaces. 

To count subgroups, we first count the number of pairs $(\sigma, \tau)$ we produce. We will assume $X_n^{(0)}$ is an annulus, whose dual graph contains a single cycle carrying the word $(LR)^n$ (like in Figure~\ref{pic_init}). We have $(6n)!/(2\cdot n)$ inequivalent ways to label the sides of $X_n^{(0)}$. Every such labeling determines $\sigma$ and part of $\tau$. The remaining part of $\tau$ is determined by the iterative part of the process. 

Following Linial--Simkin, we first count the number of different runs of the algorithm (which might have the same outcome in terms of $\sigma$ and $\tau$). By Lemma~\ref{lem:2.3}.iii, at each time $0 \leq t \leq T$, the process picks one available edge among at least $2(n-t)^2(1 + \lo(1))$.
    As such, the process can evolve in at least
    \[
        \prod_{t=0}^T 2(n-t)^2 (1 + \lo(1))
        =
        \left( 2n^2(1 + \lo(1)) \right)^{T+1}
        \prod_{t=0}^{T} \left( 1 - \frac{t}{n}\right)^2 
         = n^{2n + \lo(n)}
    \]
    different ways. The probability that the process is successful is $(1+\lo(1))$, implying that the number of successful runs is at least
\[
          n^{2n + \lo(n)}
         \]
The same (labeled) outcome can result from different initial configurations and different runs.
    There are at most $3$ Hamiltonian cycles that carry the word $(LR)^n$ in a trivalent graph with $2n$ vertices.
    During the process, $n$ edges are created, and this can be done in $n!$ different orders.
    Therefore, the process can produce at least
    \[
        \frac{1}{n! \cdot 3} \frac{(6n)!}{2\cdot n} \, n^{2n + \lo(n)} = n^{7n + \lo(n)}
    \]
    labeled triangulated surfaces (pairs $(\sigma,\tau)$) with logarithmic systoles. 
    
Now we use the fact that the map 
\[
\st{\varphi\in\Hom(\Gamma,\sym_{6n})}{\begin{array}{c}\varphi(\Gamma) \text{ acts transitively} \\ \text{on }\{1,\ldots,6n\} \end{array}} \longrightarrow \st{H<\Gamma}{[\Gamma:H] = 6n}
\]
given by $\varphi\mapsto \stab_\varphi(\{1\})$ is $((6n-1)!)$-to-$1$ (see for instance \cite[Proposition~1.1.1]{LubotzkySegal}). This means that we obtain at least 
\[
    \frac{1}{(6n-1)!} \, n^{7n + \lo(n)}
    =
    n^{n + \lo(n)}
\]
subgroups of index $6n$ in $\Gamma$.

In order to determine how many of these subgroups yield isometric surfaces, we need to count how many of them are conjugate in $G=\PSL(2,\RR)$. If $H_1,H_2<\Gamma$ are two subgroups of index $6n$ and $h\in G$ is such that $h H_1 h^{-1} = H_2$, then $h\in \mathrm{Comm}_G(\Gamma)$, the commensurator of $\Gamma$ in $G$. The problem is that $\Gamma$ (being an arithmetic group) is of infinite index in its commensurator, so this leaves lots of possibilities for $h$.

To solve this issue, we apply an argument due to Belolipetsky--Gelander--Lubotzky--Shalev \cite[p.\ 2218]{BGLS}. The \emph{congruence closure} $\overline{H}^{\mathrm{cong.}}$ of a subgroup $H<\Gamma$ is the smallest congruence subgroup of $\Gamma$ containing $H$. Lubotzky \cite{Lubotzky} proved that $\Gamma$ has less than $n^{\lo(n)}$ congruence subgroups of index $\leq 6n$ (Lubotzky's result is a lot sharper than this, but this is all we need). This implies that our procedure produces at least $n^{n + \lo(n)}$ distinct subgroups of index $6n$ with the same congruence closure $H_0$. Belolipetsky--Gelander--Lubotzky--Shalev prove that if 
\[
H_0 = \overline{H_1}^{\mathrm{cong.}} = \overline{H_2}^{\mathrm{cong.}} \quad \text{and} \quad h H_1 h^{-1} = H_2 
\]
then $h \in N(H_0)$, the normalizer of $H_0$ in $G$. The group $N(H_0)$ is a lattice in $G$, which means that its covolume is uniformly bounded from below (by $\pi/42$). This means that 
\[
[N(H_0):H_1] = [N(H_0):H_2] \leq 84 n
\]
and hence that the number of distinct conjugates of a given subgroup $H<\Gamma$ of index $6n$ and with conjugacy closure $H_0$ is at most $84n$. Which means that we obtain at least
\[
\frac{1}{84n} \, n^{n+\lo(n)} = n^{n+\lo(n)}
\]
pairwise non-isometric surfaces.
\end{proof}

\subsection{A database of surfaces with large systoles}\label{sec_examples}
In order to obtain Figure~\ref{fig:examples}, we ran a version of the process above. We modified the conditions of the random algorithm by requiring the gluings not to close up any paths carrying a word of the form $L^k$ up until the very last step of the process.

Recall that in order to make the systole large, we require the process not to call up any loops of length below $2\cosh^{-1}(\tau_0/2)$ where $\tau_0$ is a parameter we fix.
To produce our surfaces, we start with a large $\tau_0$ (around $g$), and iterate the process up to a maximum of $75$ attempts to get one surface with systole at least $\tau$.
If the process fails all these attempts (which is typical at this stage),
then we decrease $\tau_0$ by $1$ and restart again, repeating this cycle until a successful outcome is attained.
Every point in the plot in green represents the Bely\u{\i} surfaces with largest systole for that genus that we found.

We have produced a data base of the surfaces we found, that is attached as an ancillary file (\texttt{random\char`_surfaces\char`_DATABASE.pkl}) to the arXiv version of this paper.
The data is stored within a \texttt{Python} dictionary whose keys correspond to genera, and the content for each genus $g$ is pair:
\[
\Big(\tau_0,\;[i_0, i_1, \dots, i_{12g-7}]\Big).
\]
The first item of the pair is an integer $\tau_0$, and the second item is a list $[i_0, i_1, \dots, i_{12g-7}]$ of distinct integers ranging from $0$ to $12g-7$, representing a Bely\u{\i} surface of $g$ with $1$ cusp and systole at least $2\cosh^{-1}(\tau_0/2)$.
The surface, consisting of $4g-2$ triangles, is encoded as follows.
We label the edges of the these $4g-2$ oriented triangles with integers $0, \dots, 12g-7$ such that each triangle's edges are labeled in counter-clock order (with respect to the orientation) with three consecutive numbers.
Then the surface represented by $[i_0, i_1, \dots, i_{12g-7}]$ is obtained by gluing the edge $k$ with the edge $i_k$ (with a zero shear orientation-reversing gluing). The \texttt{jupyter} notebook \texttt{example.ipynb} shows how to access the data using \texttt{SageMath}.

\section{Random regular covers}\label{sec_covers}

In this section, we investigate constructions coming from random regular covers of a fixed hyperbolic surface. These regular covers are a hyperbolic analogue of the random Cayley graphs studied by Gamburd--Hoory--Shahshahani--Shalev--Vir\'ag \cite{GHSSV}, Eberhard \cite{Eberhard} and Liebeck--Shalev \cite{LiebeckShalev3}. 

Let us first define what we mean with a random regular cover. If $X$ is a hyperbolic surface of finite area, $\Gamma=\pi_1(X)$, $G$ a finite group, we will write $X_G$ for a random $G$-cover of $X$. That is, we pick an element $\varphi \in \Hom(\Gamma,G)$, uniformly at random and then $X_G$ is the cover corresponding to the subgroup $\ker(\varphi) \triangleleft \Gamma$. Technically $X_G$ is only a $G$-cover when $\varphi$ is surjective. However, in most of the cases we will study, it will either be surjective with high probability, or we will be able to control its image.

\subsection{Non-regular covers}

Before we get to the proof for regular covers, let us briefly comment on more general covers. In fact, the reason we consider regular covers is that it follows from the results of Nica \cite{Nica} and Magee--Naud--Puder \cite{MageePuder,MageeNaudPuder} that with probability tending to $1$ as $n\to\infty$, the systole of a random $n$-sheeted cover of a hyperbolic surface of finite area is bounded. So a priori, random finite covers might not seem like a good place to look for surfaces of large systole.

However, work by Dixon \cite{Dixon} and Liebeck--Shalev \cite{LiebeckShalev1} (see also \cite{MuellerSchlagePuchta1, MuellerSchlagePuchta2}) implies that with high probability as $n\to\infty$, a random cover of degree $n$ of a fixed hyperbolic surface of finite area is not regular.
Indeed, this can be seen from the fact that $n$-sheeted covers can be identified with transitive permutation actions on the set $\{1,\ldots,n\}$.
If the cover were regular, then the subgroup of the symmetric group $\sym_n$ generated by the permutation action would have $n$ elements. However, with probability tending to $1$ as $n\to\infty$, it contains the alternating group $\alt_n$. This in particular means that the results on random covers do not automatically destroy all hopes for regular covers. In fact, it will turn out that, random regular covers, just like random Cayley graphs, are a great source for surfaces of large systole.

\subsection{The union bound}

In order to control the systole of a random $G$-cover, we use  a similar union bound to that of the papers on random Cayley graphs. That is, we have
\begin{equation}\label{eq_unionbound}
        \begin{split}
            \mathbb{P} \big( \mathrm{sys}(X_G) < R \mkern2mu \big)
    & =
    \mathbb{P} \big( \exists \, \gamma^\Gamma \text{ such that } \gamma \in \ker(\varphi),\ \tau(\gamma) < R \big) \\
    & \leq
    \sum_{\gamma^\Gamma : \, \tau(\gamma) < R} \mathbb{P} \big( \gamma \in \ker(\varphi) \big)    
        \end{split}
\end{equation}

where $\gamma^\Gamma$ and $\tau(\gamma)$ denote conjugacy class of $\gamma$ in $\Gamma$ and the translation length of $\gamma$ on $\HH^2$ respectively. The number of terms in this bound is asymptotic to $e^R/R$ as $R\to\infty$ \cite{Huber_primegeod,Sarnak_Thesis,Venkov}. So, as in the case of graphs, we need to understand how the probability $\PP(\gamma \in \ker(\varphi))$ depends on the translation length of $\gamma$. 

The difference with respect to graphs is that for us this is the hyperbolic translation length instead of the word length of $\gamma$ with respect to some free generating set. On top of that, if $X$ is closed, its fundamental group is not free, which makes the structure of $\Hom(\Gamma,G)$ more complicated. For most of this section, $G$ will be a finite group of Lie type, we will comment on the case of symmetric groups in the end.

\subsection{Closed base surfaces and groups of Lie type}

We will treat closed and non-compact base surfaces of finite area seperately because the issues we encounter depend heavily on which of the two cases we are working with. We will start with closed surfaces. We will prove:

\begin{thm}\label{thm_randcoverscompact}
Let $X$ be a closed orientable hyperbolic surface. Moreover, let $\mathbf{G}$ be a connected, simply connected, simple algebraic group defined over $\QQ$. Then there exists a prime $p_0$ such that, with high probability as $\card{\FF}\to\infty$,
\[
\sys(X_{\mathbf{G}(\FF)}) \geq \bigg( \frac{1}{\dim(\mathbf{G})} + \lo(1) \bigg) \cdot \log(\area(X_{\mathbf{G}(\FF)})),
\]
for any sequence of finite fields $\FF$ whose characteristics all exceed $p_0$. 
\end{thm}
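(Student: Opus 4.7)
The plan is to combine the union bound~\eqref{eq_unionbound} with an algebro-geometric upper bound on the probability that a uniformly random homomorphism $\varphi \colon \Gamma \to G$ kills a fixed element. Write $G = \mathbf{G}(\FF)$, $d = \dim \mathbf{G}$, and fix a small $\epsilon > 0$. Setting $R = (1/d - \epsilon)\log|G|$, the union bound reduces the task to controlling
\[
    \sum_{\gamma^\Gamma : \tau(\gamma) < R} \PP\big(\varphi(\gamma) = 1\big),
\]
where the number of terms is $(1+\lo(1))\cdot e^R/R$ by the prime geodesic theorem on $X$. It therefore suffices to prove that, uniformly in non-trivial $\gamma \in \Gamma$ with $\tau(\gamma) < R$, one has $\PP(\varphi(\gamma) = 1) \leq |G|^{-1/d} \cdot (\log|G|)^{\bO(1)}$ for $\card{\FF}$ large.

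To obtain such a per-term bound, I would view $\Hom(\Gamma, \mathbf{G})$ as an affine $\QQ$-scheme inside $\mathbf{G}^{2g}$, cut out by the single surface relation $\prod [a_i, b_i] = 1$, and combine three ingredients. First, Mednykh's formula for surface groups, together with the fact that the non-trivial irreducible characters of finite simple groups of Lie type have degrees tending to infinity with $\card{\FF}$, gives $\card{\Hom(\Gamma, G)} = (1+\lo(1))\cdot |G|^{2g-1}$, matching the expected $\card{\FF}^{(2g-1)d}$. Second, the subscheme $V_\gamma = \{\varphi : \varphi(\gamma) = 1\}$ is a \emph{proper} closed subscheme of $\Hom(\Gamma, \mathbf{G})$: uniformizing $X$ provides a faithful representation $\Gamma \hookrightarrow \SL_2(\RR)$, and composing with any $\QQ$-defined $\SL_2$-subgroup of $\mathbf{G}$ (of which there are many since $\mathbf{G}$ is simple and simply connected) produces a point of $\Hom(\Gamma, \mathbf{G})(\RR)$ on which $\gamma$ does not vanish; hence $\dim V_\gamma \leq (2g-1)d - 1$. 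Third, a Lang--Weil / Bézout type estimate for affine varieties whose equations have controlled degree yields $\card{V_\gamma(\FF)} \leq \ell(\gamma)^{\bO(1)} \cdot \card{\FF}^{(2g-1)d-1}$ for large $\card{\FF}$, where $\ell(\gamma)$ is the word length of a shortest conjugacy representative; a standard quasi-isometry argument guarantees $\ell(\gamma) = \bO(R) = \bO(\log|G|)$ for $\gamma$ appearing in the sum. Dividing the third estimate by the first gives the desired per-term probability bound.

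Assembling the pieces,
\[
    \PP\big(\sys(X_G) < R\big)
    \leq
    (1+\lo(1)) \cdot \frac{e^R}{R} \cdot |G|^{-1/d} \cdot (\log|G|)^{\bO(1)}
    =
    |G|^{-\epsilon} (\log|G|)^{\bO(1)},
\]
which tends to $0$ as $\card{\FF}\to\infty$. Since $\area(X_G) = |G|\cdot \area(X)$, letting $\epsilon \to 0$ along a suitable sequence yields $\sys(X_G) \geq (1/d + \lo(1))\cdot \log \area(X_G)$ with high probability.

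The main obstacle is the uniformity of the per-term bound: one must control $\card{V_\gamma(\FF)}$ by $\card{\FF}^{(2g-1)d - 1}$ with an implicit constant that depends on $\gamma$ at worst polynomially in $\ell(\gamma)$. Properness of $V_\gamma$ is cheap, requiring only the single faithful representation above; the real work lies in bounding the Lang--Weil error term in terms of the degrees of the equations defining $V_\gamma$, which grow polynomially with $\ell(\gamma)$ by Bézout applied to the matrix product defining $\varphi(\gamma)$. The hypotheses that $\mathbf{G}$ be simply connected and simple, and that the characteristic exceed some $p_0$, enter through the denominator asymptotic (ensuring the smallest non-trivial character degree of $\mathbf{G}(\FF)$ tends to infinity with $\card{\FF}$, so that the trivial character dominates Mednykh's formula) and through avoiding small-characteristic pathologies in the structure of $\mathbf{G}(\FF)$ and in the geometric integrality of $\Hom(\Gamma, \mathbf{G})$.
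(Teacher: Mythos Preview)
Your overall architecture matches the paper's exactly: union bound~\eqref{eq_unionbound}, prime geodesic theorem for the number of terms, Mednykh's formula for $\card{\Hom(\Gamma,G)}$, a Lang--Weil/B\'ezout bound $\card{V_\gamma(\FF)} \leq \deg(V_\gamma)\cdot\card{\FF}^{\dim V_\gamma}$ with $\deg(V_\gamma)$ polynomial in the word length, and the \u{S}varc--Milnor comparison of word length and translation length. The paper's Lemmas~\ref{lem_svarcmilnor_translength} and~\ref{lem_dimbound} play exactly the roles you assign.

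The one substantive difference is in how you produce a point of $\Hom(\Gamma,\mathbf{G})$ outside $V_\gamma$. You use the Fuchsian uniformisation $\Gamma\hookrightarrow\SL_2(\RR)$ composed with an $\SL_2\hookrightarrow\mathbf{G}$; the paper instead uses that surface groups are residually free (Baumslag) together with Borel's theorem on dominance of word maps. Both give a witness that $V_\gamma$ is proper, but over different fields: yours lives over $\RR$ (or $\CC$), whereas Borel's theorem applies over \emph{any} algebraically closed field and hence directly over $\overline{\FF}$.

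This difference is not cosmetic. Your route gives $\dim V_{\gamma,\CC} \leq (2g-1)d-1$, but the point-count bound you invoke needs $\dim V_{\gamma,\overline{\FF}} \leq (2g-1)d-1$. Fiber dimension is only upper semicontinuous, so properness over $\CC$ transports to $\overline{\FF_p}$ only for $p$ outside a finite set that \emph{a priori depends on $\gamma$}. Since $R\to\infty$ with $\card{\FF}$, infinitely many $\gamma$'s eventually enter the sum, and a $\gamma$-dependent threshold is not enough to get a single $p_0$ as in the statement. The paper sidesteps this entirely: irreducibility of $\Hom(\Gamma,\mathbf{G})$ over $\overline{\FF_p}$ for $p>p_0$ comes from Bertini--Noether applied once (independently of $\gamma$), and then Baumslag\,+\,Borel furnishes, for every $\gamma$ and every such $p$, a point of $\Hom(\Gamma,\mathbf{G})(\overline{\FF_p})$ not in $V_\gamma$. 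So your sketch is correct in spirit but needs either this replacement or an explicit uniformity argument for the spreading-out step.
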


Before we get to the proof of this theorem, we record two lemmas that we will need. The first of these is a standard consequence of the \u{S}varc--Milnor lemma (see for instance \cite[Proposition 8.19]{BridsonHaefliger}).

\begin{lem}\label{lem_svarcmilnor_translength}
Let $S$ be a finite generating set of $\Gamma$ and let $\tau_S(\gamma)$ denote the translation length of $\gamma \in \Gamma$ on the Cayley graph $\mathrm{Cay}(\Gamma, S)$ of $\Gamma$ with respect to $S$.
Then there exists a constant $K_{X,S}>0$ such that
\[
\frac{1}{K_{X,S}} \cdot \tau_S(\gamma) - K_{X,S} \leq \tau(\gamma) \leq K_{X,S} \cdot  \tau_S(\gamma) \quad \text{for all }\gamma\in\Gamma.
\]
\end{lem}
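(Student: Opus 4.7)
The plan is to invoke the \u{S}varc--Milnor lemma for the action of $\Gamma = \pi_1(X)$ on $\HH^2$ by deck transformations. Since $X$ is closed, this action is isometric, properly discontinuous and cocompact, so fixing any basepoint $x_0 \in \HH^2$, the orbit map $\gamma \mapsto \gamma \cdot x_0$ is a quasi-isometry from the Cayley graph $\mathrm{Cay}(\Gamma, S)$, equipped with the word metric $d_S$, to $(\HH^2, d_{\HH^2})$. Concretely, there exist constants $A \geq 1$ and $B \geq 0$, depending only on $X$, $S$ and $x_0$, such that
\[
\tfrac{1}{A}\, d_S(\gamma_1, \gamma_2) - B \;\leq\; d_{\HH^2}(\gamma_1 x_0, \gamma_2 x_0) \;\leq\; A\, d_S(\gamma_1, \gamma_2) + B
\]
for all $\gamma_1, \gamma_2 \in \Gamma$.

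From here I would pass to translation lengths using the limit characterisation
\[
\tau(\gamma) = \lim_{n \to \infty} \tfrac{1}{n}\, d_{\HH^2}(x_0, \gamma^n x_0), \qquad \tau_S(\gamma) = \lim_{n \to \infty} \tfrac{1}{n}\, d_S(e, \gamma^n),
\]
both of which exist by Fekete's lemma applied to the subadditive sequences $n \mapsto d(\cdot, \gamma^n \cdot)$, and are independent of the basepoint. Applying the quasi-isometry inequality above with $\gamma_1 = e$ and $\gamma_2 = \gamma^n$, dividing by $n$, and letting $n \to \infty$, the additive constants $B$ vanish, yielding $\tfrac{1}{A}\tau_S(\gamma) \leq \tau(\gamma) \leq A\, \tau_S(\gamma)$. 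Setting $K_{X,S} \coloneqq \max(A, B)$ then gives the stated two-sided bound, with slack on the $-K_{X,S}$ term (which is presumably present in the formulation to accommodate a version where $\tau_S(\gamma)$ is replaced by the ordinary word length $|\gamma|_S$, the two differing by a bounded amount on each conjugacy class).

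The proof is essentially routine: the \u{S}varc--Milnor lemma carries all the geometric content, and the limit characterisations reduce the comparison between translation lengths to the comparison between distances. The only point worth flagging is that closedness of $X$ is used not only to ensure properness and cocompactness of the deck action, but also implicitly to guarantee that every non-trivial element of $\Gamma$ acts as a hyperbolic isometry, so that $\tau$ and $\tau_S$ are strictly positive away from the identity and the two-sided inequality is genuinely informative on all of $\Gamma$.
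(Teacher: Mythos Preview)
Your approach is the same as the paper's --- \v{S}varc--Milnor followed by dividing by $n$ and passing to the limit --- but there is a genuine gap in the second displayed equality. The quantity $\lim_{n\to\infty}\tfrac{1}{n}\,d_S(e,\gamma^n)$ is the \emph{stable} translation length $\overline{\tau}_S(\gamma)$, not the translation length $\tau_S(\gamma)=\min_{\delta}d_S(\delta,\gamma\delta)$. These coincide for the action on $\HH^2$ (so your first equality is fine), but in Cayley graphs they generally differ: already for $\ZZ$ with generating set $\{\pm 2,\pm 3\}$ one has $\tau_S(1)=2$ while $\overline{\tau}_S(1)=\tfrac{1}{3}$. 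Your argument therefore only yields $\tfrac{1}{A}\,\overline{\tau}_S(\gamma)\leq \tau(\gamma)\leq A\,\overline{\tau}_S(\gamma)$, and the passage back to $\tau_S$ is missing.

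The paper fills exactly this gap by invoking the fact that $\Gamma$, being a closed surface group, is word-hyperbolic, which guarantees a uniform bound $\tau_S(\gamma)-K''\leq \overline{\tau}_S(\gamma)\leq \tau_S(\gamma)$. Chaining this with the multiplicative comparison gives the stated inequality, and the additive $-K_{X,S}$ on the left is precisely the price of this last step --- not, as you speculate, an artefact of replacing $\tau_S$ by the word length $|\gamma|_S$.
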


\begin{proof}
Since we could not find a proof in the literature, we provide one for completeness.  We recall that if a group $\Gamma$ acts on a metric space $(X,d)$ by isometries, then the \emph{stable translation length} of an element $\gamma\in \Gamma$ is given by:
\[
\overline{\tau}_X(\gamma) \coloneqq \lim_{n\to\infty} \frac{d(\gamma^n x_0,x_0)}{n}
\]
for any $x_0\in X$. It follows from the triangle inequality that this definition does not depend on the base point $x_0$.

So, fix a base point $x_0\in\HH^2$ to define an orbit map $\Gamma\to\HH^2$ by $\gamma\mapsto \gamma\cdot x_0$. By the \u{S}varc--Milnor lemma,
\[
\frac{1}{K_{X,S}'}\cdot d_{\mathrm{Cay}(\Gamma,S)}(\gamma^n,e) - K_{X,S}' \leq d_{\HH^2}(\gamma^n x_0,x_0) \leq 
K_{X,S}'\cdot d_{\mathrm{Cay}(\Gamma,S)}(\gamma^n,e) + K_{X,S}'
\]
for some constant $K_{X,S}'>0$ independent of $\gamma$ and $n$. Dividing by $n$, taking limits and using the fact that on the hyperbolic plane, stable translation length and translation length coincide, we obtain
\[
\frac{1}{K_{X,S}'}\cdot \overline{\tau}_S(\gamma)  \leq \tau(\gamma) \leq 
K_{X,S}'\cdot \overline{\tau}_S(\gamma).
\]
Finally, we use that $\Gamma$ is a hyperbolic group, which implies that there exists a uniform constant $K''_S$ such that
\[
\tau_S(\gamma) - K''_S \leq \overline{\tau}_S(\gamma) \leq \tau_S(\gamma)
\]
for all $\gamma\in\Gamma$. Stringing the last two inequalities together proves the lemma.
\end{proof}

The second lemma is about evaluation maps: given $\gamma \in \Gamma$, the associated evaluation map $\mathrm{ev}_\gamma:\Hom(\Gamma,G) \to G$ is given by
\[
\mathrm{ev}_\gamma(\varphi) = \varphi(\gamma), \quad \text{for all } \varphi\in\Hom(\Gamma,G).
\]
This is the analog of a word map in our setting. Our goal is to understand the sets
\[
V_{G,\gamma} = \st{\varphi \in \Hom(\Gamma,G)}{\mathrm{ev}_\gamma(\varphi)=e}.
\] 
If $G$ is an algebraic group, then $\Hom(\Gamma,G)$ is a algebraic variety that can be identified with
\[
\st{(A_1,B_1,\ldots,A_g,B_g) \in G^{2g}}{\prod_{i=1}^g [A_i,B_i] = e}
\]
and $V_{G,\gamma}$ is a subvariety.
\begin{lem}\label{lem_dimbound}
If $\mathbf{G}$ is a connected, simply connected, simple algebraic group defined over $\QQ$. Then there exists a prime $p_0$ such that for all algebraically closed fields $\FF$ of characteristic $p>p_0$ and for all $\gamma\in\Gamma$,
\[
\dim(V_{\mathbf{G}(\FF),\gamma}) < \dim(\Hom(\Gamma,\mathbf{G}(\FF))).
\]
whenever $\mathbf{G}$ is defined over $\FF$.
\end{lem}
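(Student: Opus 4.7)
My plan is to deduce the dimension inequality from (a) the irreducibility of the representation variety $\Hom(\Gamma,\mathbf{G}(\FF))$ together with (b) the existence, for every $\gamma\neq e$, of at least one $\varphi\in\Hom(\Gamma,\mathbf{G}(\FF))$ with $\varphi(\gamma)\neq e$. The first reduction is elementary: $V_{\mathbf{G}(\FF),\gamma}$ is the preimage under $\mathrm{ev}_\gamma$ of the identity, so once $\Hom$ is irreducible we have $\dim V_\gamma<\dim\Hom$ if and only if $V_\gamma\subsetneq\Hom$, i.e.\ if and only if $\mathrm{ev}_\gamma$ is not constantly $e$.

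For (a), I would invoke the theorem of Li and of Rapinchuk--Benyash-Krivets--Chernousov: for $\mathbf{G}$ connected, simply connected and simple, and $g\geq 2$, the variety $\Hom(\Gamma,\mathbf{G})$ is irreducible of pure dimension $(2g-1)\dim\mathbf{G}$ over any algebraically closed field of characteristic zero. A standard spreading-out argument---working with the natural integral model of $\Hom$ over $\mathrm{Spec}(\ZZ[1/N])$ and using flatness---extends this to $\bar{\FF}_p$ once $p$ exceeds some prime depending only on $\mathbf{G}$. For (b), I would use the hyperbolic structure on $X$, which provides a discrete faithful representation $\rho_0\colon\Gamma\to\PSL(2,\RR)$; since $g\geq 2$ this lifts to $\SL(2,\RR)$ and, after passing to its trace field, to a faithful $\rho_0\colon\Gamma\hookrightarrow\SL(2,\calO_K[1/N_0])$ for some number field $K$. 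Composing with a root-subgroup embedding $\SL(2)\hookrightarrow \mathbf{G}$ yields a faithful $\tilde\rho\colon\Gamma\hookrightarrow\mathbf{G}(\calO_K[1/N_0])$, and reduction modulo a prime of $\calO_K$ of residue characteristic $p$ produces a representation into $\mathbf{G}(\bar{\FF}_p)$ that, for each fixed $\gamma$, does not kill $\gamma$ as soon as the residue prime avoids a finite bad set.

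The main obstacle I expect is making $p_0$ uniform in $\gamma$. For any fixed prime $\mathfrak p$, the reduction of $\tilde\rho$ modulo $\mathfrak p$ already has an entire congruence subgroup of $\Gamma$ in its kernel, so a single reduction cannot separate every nontrivial $\gamma\in\Gamma$ from $e$ simultaneously. To resolve this, I would argue scheme-theoretically with the universal representation $\rho_{\mathrm{univ}}\colon\Gamma\to\mathbf{G}(\calO(\Hom))$: the non-vanishing $\rho_{\mathrm{univ}}(\gamma)\neq e$ established over $\bar{\QQ}$ needs to persist after reduction modulo $p$, which should follow from identifying an integral $\pm 1$ coefficient in the entries of $\rho_{\mathrm{univ}}(\gamma)-e$ modulo the commutator ideal defining $\Hom$, uniformly in $\gamma$. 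Alternatively, one can combine Borel's dominance theorem for word maps on semisimple algebraic groups with an argument that $\gamma\neq e$ in $\Gamma$ and the commutator relation cut out genuinely independent constraints on $\mathbf{G}^{2g}$; this is precisely where the assumptions that $\mathbf{G}$ be simple simply connected and $p$ be large enough enter the argument.
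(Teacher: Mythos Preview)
Your part (a) is essentially the paper's argument: irreducibility over $\CC$ (Weil + Li) plus a spreading-out/Bertini--Noether step to pass to large positive characteristic. Good.

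Part (b) has a real gap. Your first approach---reducing a fixed faithful arithmetic representation $\tilde\rho\colon\Gamma\hookrightarrow\mathbf{G}(\calO_K[1/N_0])$ modulo primes---cannot be made uniform in $\gamma$, and you correctly diagnose why: for every prime $\mathfrak p$, the kernel of reduction contains a nontrivial congruence subgroup, so the union over all $\gamma$ of the bad primes is \emph{every} prime. Your proposed fixes do not close this. The scheme-theoretic suggestion (finding an integral $\pm 1$ coefficient in $\rho_{\mathrm{univ}}(\gamma)-e$ modulo the commutator ideal, uniformly in $\gamma$) is a restatement of the problem rather than a solution. The Borel alternative is closer, but as written it is circular: Borel's theorem applies to a nontrivial word $w\in F_{2g}$ and tells you the word map $G^{2g}\to G$ is dominant, but you need non-triviality of $w$ on the \emph{subvariety} $\Hom(\Gamma,G)\subset G^{2g}$, which is exactly the statement you are trying to prove. ``Independent constraints'' is an assertion, not an argument.

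The missing idea, and the paper's route, is Baumslag's theorem that closed surface groups are \emph{residually free}: for each $\gamma\neq e$ there is a homomorphism $\psi\colon\Gamma\to F_r$ to a free group with $\psi(\gamma)\neq e$. Now $\psi(\gamma)$ is an honest nontrivial word in $F_r$, so Borel's dominance theorem (valid over any algebraically closed field) gives $\rho\colon F_r\to \mathbf{G}(\FF)$ with $\rho(\psi(\gamma))\neq e$. Then $\rho\circ\psi\in\Hom(\Gamma,\mathbf{G}(\FF))\smallsetminus V_{\mathbf{G}(\FF),\gamma}$. This produces the witness representation over \emph{every} algebraically closed field, with no bad primes at all; the prime $p_0$ enters only through the irreducibility step in (a).
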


\begin{proof} For ease of notation, we'll write $G \coloneqq \mathbf{G}(\FF)$. Weil \cite{Weil} proved that $\Hom(\Gamma,\mathbf{G}(\CC))$ is non-singular,
and Li \cite[Theorem~0.1]{Li} proved that it is connected,
which means that it is irreducible (see also Liebeck--Shalev \cite[Corollary~1.11(ii)]{LiebeckShalev2} for related results). 
By the Bertini--Noether theorem (see for instance \cite[Proposition~10.4.2]{FriedJarden}) this means that $\Hom(\Gamma, G)$ is irreducible whenever the characteristic of $\FF$ is large enough. 

So all we need to do, is prove that $V_{G,\gamma}$ is a proper subvariety. To this end, we use classical results by Baumslag and Borel: $\Gamma$ is residually free \cite{Baumslag} and hence there is an epimorphism $\psi \colon \Gamma \to F_r$, where $F_r$ is a free group of rank $r \geq 1$, such that $\psi(\gamma)\neq e$. By Borel's theorem on the dominance of word maps \cite[Theorem B]{Borel}, we can find a homomorphism $\rho \colon F_r \to G$ such that $\rho(\psi(\gamma)) \neq e$. In particular, $\rho \circ \psi \in \Hom(\Gamma,G) \smallsetminus V_{G,\gamma}$. 
\end{proof}

\begin{rem}\label{rem_irreducibility_problem}
It would be nice to have a version of the lemma above that doesn't use the assumption that $\mathbf{G}$ is simply connected. However, the proof based on the results by Borel and Baumslag does not work in this case. Indeed, when $\mathbf{G}$ is not simply connected, the irreducible components of $\Hom(\Gamma,\mathbf{G}(\CC))$ are indexed by the elements of the fundamental group $\pi_1(\mathbf{G}(\CC))$ (\cite[Theorem 0.1]{Li}, \cite[Corollary 1.11(ii)]{LiebeckShalev2}). The representations that factor through a map to a free group all lift to representations $\Gamma \to \mathbf{\widetilde{G}}(\FF)$, where $\mathbf{\widetilde{G}}$ denotes the universal cover of $\mathbf{G}$. This means that they all sit in an image of $\Hom(\Gamma,\mathbf{\widetilde{G}}(\FF))$, which is irreducible, and hence are all elements of the same component of $\Hom(\Gamma,\mathbf{G}(\FF))$.
\end{rem}

We are now ready to prove Theorem~\ref{thm_randcoverscompact}:

\begin{proof}[Proof of Theorem~\ref{thm_randcoverscompact}]
For ease of notation, we'll write $G=\mathbf{G}(\FF)$ again. Recall from \eqref{eq_unionbound} that
\begin{equation}\label{eq_unionboundfilledin}
    \mathbb{P} \big( \mathrm{sys}(X_G) < R \mkern2mu \big)
    \leq  \sum_{\gamma^\Gamma : \, \tau(\gamma) < R} \mathbb{P} \big( \gamma \in \ker(\varphi) \big)   \leq
    \frac{|\{ \gamma^\Gamma : \tau(\gamma) < R \}|}{|\Hom(\Gamma, G)|}
    \cdot \sup_{\gamma^\Gamma : \, \tau(\gamma) < R} |V_{G, \gamma}|.
\end{equation}

The dimension bound of Lemma~\ref{lem_dimbound} and the assumption that the characteristic of $\FF$ exceeds $p_0$ imply that
\[
    |V_{G, \gamma}|
    \leq
    \deg \mathopen{}\big( V_{\mathbf{G}(\overline{\FF}), \gamma} \big)\mathclose{} \cdot |\FF|^{\dim V_{\mathbf{G}(\overline{\FF}), \gamma}}
    \leq
    \deg \mathopen{}\big( V_{\mathbf{G}(\overline{\FF}), \gamma} \big)\mathclose{} \cdot |\FF|^{\dim \Hom(\Gamma, \mathbf{G}(\overline{\FF})) - 1},
\]
see for instance \cite[Claim~7.2]{ZeevKollarLovett} or \cite[Lemma~A.3]{EllenbergOberlinTao}. Here, $\deg \mathopen{}\big( V_{\mathbf{G}(\overline{\FF}), \gamma} \big)\mathclose{} $ is the degree of $V_{G,\gamma}$ is the algebraic closure of $\FF$.

The degree $\deg \mathopen{}\big( V_{\mathbf{G}(\overline{\FF}), \gamma} \big)\mathclose{}$ can be bounded by the products of the degrees of the polynomials defining $V_{G,\gamma}$ (see for instance \cite[Example~8.4.6]{Fulton}). These degrees are at most the word length of $\gamma$ with respect to $S$.

Since the probability $\PP(\gamma \in \ker(\varphi))$ is a conjugacy invariant function on $\Gamma$, we are free to assume $\gamma$ realizes the minimal word length in its conjugacy class. This minimal word length is exactly the translation length $\tau_S(\gamma)$ of $\gamma$ on the Cayley graph $\mathrm{Cay}(\Gamma,S)$.

This means that the degrees of the polynomial equations defining $V_{G,\gamma}$ can be chosen to be at most $K_S\cdot\tau(\gamma)$ (by Lemma~\ref{lem_svarcmilnor_translength}). Finally, the number of equations depends only on the size of the matrix representation and hence only on $\mathbf{G}$, so we will write $N_{\mathbf{G}}$ for this number. Combining all the bounds, we obtain the bound
\begin{equation}\label{eq_boundbadhomom}
\card{V_{G,\gamma}} \leq \Big(K_S\cdot \tau(\gamma) \Big)^{N_{\mathbf{G}}} \cdot \card{\FF}^{\dim(\Hom(\Gamma,\mathbf{G}(\overline{\FF})))-1}.
\end{equation}
On the other hand (see for instance \cite[Corollary~3.2]{LiebeckShalev2}), 
\[
\card{\Hom(\Gamma,G)} = \card{G}^{2g-1} \cdot \zeta^G(2g-2),
\]
where $g$ is the genus of $X$ and $\zeta^G$ is the $\zeta$-function of $G$, defined by
\[
\zeta^G(s) = \sum_{\chi \in \mathrm{Irr}(G)} \chi(1)^{-s},
\]
where $\mathrm{Irr}(G)$ denotes the set of equivalence classes of irreducible complex characters of $G$. Observe that, because the trivial representation appears in $\mathrm{Irr}(G)$,
\[
\card{\Hom(\Gamma,G)} \geq \card{G}^{2g-1} \sim \card{\FF}^{(2g-1)\cdot\dim(\mathbf{G}(\overline{\FF}))}\quad \text{as } \card{\FF} \to \infty.
\]
Liebeck--Shalev \cite[Corollary 1.3]{LiebeckShalev2} proved that if $G$ is quasisimple, this bound is asymptotically sharp (but we will not need this below). By \cite[Corollary~1.11]{LiebeckShalev2}, we have $(2g-1)\cdot\dim(\mathbf{G}(\overline{\FF}))=\dim(\Hom(\Gamma,\mathbf{G}(\overline{\FF}))$. So,
\[
\card{\Hom(\Gamma,G)} \gtrsim \card{\FF}^{\dim(\Hom(\Gamma,\mathbf{G}(\overline{\FF})))},\quad \text{as } \card{\FF} \to \infty.
\]
So, filling in \eqref{eq_unionboundfilledin} with this bound, the bound from \eqref{eq_boundbadhomom}, and Huber's asymptotic formula for the number of terms, we obtain
\begin{multline*}
    \PP \big( \sys(X_G) < R \big) \leq \frac{c_X \cdot e^R}{R\cdot \card{\Hom(\Gamma,G)}}\sup_{\gamma^\Gamma:\;\tau(\gamma)<R} \Big(K_S\cdot \tau(\gamma) \Big)^{N_{\mathbf{G}}} \cdot \card{\FF}^{\dim(\Hom(\Gamma,\mathbf{G}))-1}  \\
\leq \frac{c_{X,\mathbf{G}} R^{N_{\mathbf{G}}-1} e^R}{\card{\FF}},
\end{multline*}
for some constants $c_X,c_{X, \mathbf{G}} > 0$.
Thus, we obtain that, with high probability as $\card{\FF}\to\infty$ among fields with characteristic $>p_0$, 
\[
\sys(X_{\mathbf{G}(\FF)})\geq (1 + \lo(1)) \cdot \log(\card{\FF}).
\]
Now using that 
\[
\log(\card{\FF}) \sim \frac{1}{\dim(\mathbf{G})}\cdot \log(\card{\mathbf{G}(\FF)}) \sim \frac{1}{\dim(\mathbf{G})}\cdot \log(\area(X_{\mathbf{G}(\FF)})),
\]
as $\card{\FF}\to\infty$, we obtain the bound we claimed.
\end{proof}

\subsection{Non-compact base surfaces and groups of Lie type}

Now we treat the case in which $X$ is non-compact. In this case, $\Gamma \coloneqq \pi_1(X)$ is free and hence we can use the bounds on the probability that a word evaluates to the identity undelying \cite[Theorem~4]{GHSSV}, so we can now treat all finite simple groups of Lie type. On the other hand, in this case the relation between the word length of an element of $\Gamma$ and its translation length on the hyperbolic plane is more complicated. In practice, this means that our multiplicative constants will get worse. We will prove:

\begin{thm}\label{thm_randcovers_noncompact}
Let $X$ be a non-compact hyperbolic surface of finite area. Moreover, let $\mathbf{G}$ be a simple algebraic group, defined over $\FF_q$ for all $q$. Then there exists a constant $a_X>0$ such that
\[
\PP\left(\sys(X_{\mathbf{G}(\FF_q)}) > a_X\cdot \log(\area(X_{\mathbf{G}(\FF_q)})) \right) \to 1
\]
as $q\to\infty$.
\end{thm}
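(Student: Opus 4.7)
The proof will follow the same general strategy as that of \thmref{thm_randcoverscompact}, but with one simplification and one genuine complication. The simplification is that $\Gamma \coloneqq \pi_1(X)$ is free of some finite rank $r$, so $\Hom(\Gamma, G) \cong G^r$ and a uniform random homomorphism is simply an $r$-tuple of independent uniform elements of $G$. This bypasses the algebro-geometric inputs \lemref{lem_dimbound} and \remref{rem_irreducibility_problem}, and is why we may drop the simply connectedness assumption on $\mathbf{G}$ and the lower bound on the characteristic, treating all finite simple groups of Lie type of bounded rank on an equal footing. Starting from the union bound \eqref{eq_unionbound}, the task is to estimate $\PP(\varphi(\gamma) = e)$ uniformly over conjugacy classes $\gamma^\Gamma$ with $\tau(\gamma) < R$.

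For the probabilistic side, I plan to invoke a word-map estimate of the type used in \cite[Theorem~4]{GHSSV} (and its higher-rank analogues): there exist constants $A_{\mathbf{G}}, \beta_{\mathbf{G}}, C_{\mathbf{G}} > 0$ such that, for any nontrivial reduced word $w$ of length $\ell$ on $r$ letters,
\[
    \PP\bigl(w(g_1, \dots, g_r) = e\bigr) \;\leq\; C_{\mathbf{G}} \cdot \ell^{A_{\mathbf{G}}} / |G|^{\beta_{\mathbf{G}}}.
\]
The complication is that I must then convert a translation-length bound into a word-length bound, and the \v{S}varc--Milnor comparison \lemref{lem_svarcmilnor_translength} fails in the presence of cusps: a direct matrix computation in $\PSL(2,\RR)$ shows, for instance, that an element of the form $a^n b$ with $a$ parabolic and $b$ hyperbolic has word length $n+1$ but translation length of order $2\log n$. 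I will encode the appropriate comparison as a separate distortion lemma of the shape $\ell(\gamma) \leq D_X \cdot e^{B_X \tau(\gamma)}$ for $\gamma$ hyperbolic and constants depending only on $X$.

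Combining the distortion bound, the word-map estimate, and the asymptotic $\#\{\gamma^\Gamma : \tau(\gamma) < R\} \sim e^R/R$ recorded just after \eqref{eq_unionbound} yields
\[
    \PP\bigl(\sys(X_G) < R\bigr) \;\leq\; \frac{C'_X \cdot e^{(1 + A_{\mathbf{G}} B_X) R}}{R \cdot |G|^{\beta_{\mathbf{G}}}}.
\]
Setting $R = a_X \log |G|$ with $a_X$ strictly smaller than $\beta_{\mathbf{G}} / (1 + A_{\mathbf{G}} B_X)$ forces the right-hand side to tend to $0$, and the asymptotic $\log |G| \sim (1/\dim\mathbf{G}) \log \area(X_G)$ then delivers the claimed bound. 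The principal obstacle, and what forces $a_X$ to be strictly smaller than the $1/\dim\mathbf{G}$ of the closed case, is precisely this exponential cusp distortion; a secondary technical point is to verify that the word-map estimate borrowed from \cite{GHSSV} remains valid in the polynomial-in-$|G|$ range of word lengths that the distortion forces upon us, which is why the assumption that $\mathbf{G}$ has bounded rank is essential here.
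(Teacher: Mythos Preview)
Your proposal is correct and follows essentially the same route as the paper: the union bound \eqref{eq_unionbound}, the distortion estimate $\tau_S(\gamma)\leq e^{K\tau(\gamma)}$ for hyperbolic $\gamma$ (your lemma is exactly the paper's \lemref{lem_distortion}), and the polynomial-in-word-length bound $\PP(\beta\in\ker\varphi)\leq a\,\tau_S(\beta)^b/q$ from \cite[Equation~(7)]{GHSSV}. One small slip: $\area(X_G)=\card{G}\cdot\area(X)$, so $\log\card{G}\sim\log\area(X_G)$ with no $1/\dim\mathbf{G}$ factor; that factor appears only when comparing $\log\card{G}$ with $\log\card{\FF}$.
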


The proof of this theorem still relies on the union bound \eqref{eq_unionbound}. Since we are no longer assuming the surface is closed, $\Gamma$ is no longer quasi-isometric to $\HH^2$ and we can no longer use Lemma~\ref{lem_svarcmilnor_translength}. We replace this by the following:
\begin{lem}\label{lem_distortion}
For any finite generating set $S$ of $\Gamma$ there exists a constant $K_{X,S}>0$ such that
\[
\frac{1}{K_{X,S}}\cdot \log(\tau_S(\gamma)) \leq \tau(\gamma) \leq K_{X,S} \cdot \tau_S(\gamma),
\]
for all hyperbolic elements $\gamma \in \Gamma$, where $\tau_S \colon \Gamma \to \NN$ denotes the translation length of $\gamma$ on the Cayley graph $\mathrm{Cay}(\Gamma,S)$.
\end{lem}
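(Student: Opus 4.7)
My plan is to prove the two inequalities separately. For the upper bound $\tau(\gamma)\leq K_{X,S}\cdot\tau_S(\gamma)$, I would repeat the orbit-map argument from Lemma~\ref{lem_svarcmilnor_translength}: fix $x_0\in\HH^2$; the orbit map $\gamma\mapsto\gamma\cdot x_0$ is $K$-Lipschitz from the word metric on $\Gamma$ to the hyperbolic metric, so $d_{\HH^2}(x_0,\gamma^n\cdot x_0)\leq K\cdot d_{\mathrm{Cay}(\Gamma,S)}(e,\gamma^n)$; dividing by $n$ and letting $n\to\infty$ yields $\tau(\gamma)\leq K\cdot \overline{\tau}_S(\gamma)\leq K\cdot\tau_S(\gamma)$, where the last inequality follows from conjugation-invariance of $\overline{\tau}_S$ together with subadditivity. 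No assumption that $\gamma$ be hyperbolic is needed at this step.

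The lower bound $\log\tau_S(\gamma)\leq K_{X,S}\cdot\tau(\gamma)$ is the heart of the lemma and reflects the exponential distortion of the cyclic parabolic subgroups of $\Gamma$. Since any two finite generating sets give bi-Lipschitz word metrics, I may enlarge $S$ so as to contain primitive generators $p_1,\ldots,p_k$ of representatives of the conjugacy classes of maximal parabolic subgroups of $\Gamma$ --- one per cusp of $X$. Pick pairwise disjoint embedded horocyclic neighborhoods of the cusps, of some fixed boundary length $h>0$, and let $X^{\mathrm{thick}}$ denote their complement, a compact surface with boundary. Let $\bar c_\gamma\subset X$ be the closed geodesic representing the conjugacy class of $\gamma$; it has length $\tau(\gamma)$. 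Decompose $\bar c_\gamma$ into maximal \emph{thick arcs} lying in $X^{\mathrm{thick}}$ and \emph{cusp arcs} each contained in a single cusp neighborhood.

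The key geometric input is the classical upper half-plane calculation: a geodesic arc that enters a cusp neighborhood, wraps around the cusp $k$ times, and exits has hyperbolic length at least $2\log(k)-O_h(1)$, equivalently $k\leq C_h\cdot e^{\ell/2}$, where $\ell$ is the arc's length. A cusp arc wrapping $k$ times around the $j$-th cusp therefore contributes a factor $p_j^{\pm k}$ to a word in $S$ representing $\gamma$, while each thick arc of length $L$ contributes at most $O(L+1)$ generators, by the \u{S}varc--Milnor lemma applied to the cocompact $\Gamma$-action on the preimage of $X^{\mathrm{thick}}$ in $\HH^2$. Since thick arcs have length bounded below (by the distance between two disjoint horocycles), there are at most $O(\tau(\gamma))+1$ of them, and summing over the decomposition gives
\[
\tau_S(\gamma)\;\leq\;C_1\cdot\tau(\gamma) + C_2\cdot(\tau(\gamma)+1)\cdot e^{\tau(\gamma)/2}.
\]
Taking logarithms and using $\tau(\gamma)\geq\sys(X)>0$ then yields $\log\tau_S(\gamma)\leq K_{X,S}\cdot\tau(\gamma)$. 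The main obstacle is not really conceptual but requires care with constants and edge cases (e.g.\ $\bar c_\gamma$ entirely in $X^{\mathrm{thick}}$, or tangent to a horocycle boundary); alternatively, the exponential distortion of cyclic parabolic subgroups in non-uniform rank-one lattices is well known from the literature on relatively hyperbolic groups and could be quoted directly.
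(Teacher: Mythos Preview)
Your proposal is correct, and the upper bound is handled exactly as in the paper (the orbit-map/\u{S}varc--Milnor argument). For the lower bound your route is genuinely different from the paper's, though it rests on the same geometric fact.

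The paper works entirely in the universal cover with a fixed ideal-polygon fundamental domain $P$. Given a cyclically reduced word $w$ of length $n$ representing $\gamma$, it builds the chain $A$ of $n$ translates of $P$ spelling out $w$, observes that the axis of $\gamma$ runs through $A$ with $\ell(\alpha_w\cap A)=\tau(\gamma)$, and defines a combinatorial quantity $\iota(w)$ counting how many disjoint pairs of consecutive copies of $P$ are met by $\alpha_w$ at non-asymptotic sides. Each such pair contributes a definite amount $R>0$ to the hyperbolic length, so $\iota(w)\leq \tau(\gamma)/R$; pigeonholing then yields a subword of length roughly $n/\iota(w)$ along which $\alpha_w$ stays between asymptotic sides (i.e.\ in a single cusp), and a direct upper-half-plane computation bounds that segment's length below by $c_P'\log n$. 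No thick--thin decomposition or Lipschitz orbit map on the thick part is invoked.

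Your argument instead works on the quotient: you take the closed geodesic $\bar c_\gamma$, cut it along horocycle boundaries into thick arcs and cusp arcs, use that a cusp arc wrapping $k$ times has length $\gtrsim 2\log k$, and use \u{S}varc--Milnor on the compact core to control thick arcs. This is the standard picture from relatively hyperbolic groups and is arguably more conceptual; it also makes transparent why the distortion is exactly exponential. The price is that turning the arc decomposition into an honest word in $S$ representing $\gamma$ requires a little bookkeeping with basepoints and connecting paths (which you acknowledge). The paper's approach avoids that bookkeeping by working directly with the combinatorics of the fundamental domain, at the cost of being more ad hoc and tied to the specific generating set of side pairings.
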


\begin{proof}
The upper bound comes from the proof that is also behind Lemma~\ref{lem_svarcmilnor_translength}, so we will focus on the lower bound.

Because the Cayley graphs $\mathrm{Cay}(\Gamma,S)$ and $\mathrm{Cay}(\Gamma,S')$ are quasi-isometric for any pair of finite generating sets $S$ and $S'$, we can pick our favorite finite generating set. To this end, take a finite sided ideal polygon $P$ in $\HH^2$ that is a fundamental domain for $\Gamma$. Let $S$ be the set of side identifications of $P$ that appear in $\Gamma$. This is a finite and free generating set. Freeness can be seen from the fact that every side of $\gamma(P)$ is separating for all $\gamma\in\Gamma$, which means that all edges of $\mathrm{Cay}(\Gamma,S)$ with respect to $S$ are separating and hence that $\mathrm{Cay}(\Gamma,S)$ is a tree.

Now write $S=\{s_1,s_1^{-1}\ldots,s_k,s_k^{-1}\}$ and let $w=s_{i_1}^{\eps_1}s_{i_2}^{\eps_2}\cdots s_{i_n}^{\eps_n}$, with $\eps_i\in\{\pm 1\}$ for $i=1,\ldots,n$, be a cyclically reduced word conjugate to $\gamma\in\Gamma$, so that $\tau_S(\gamma)=n$. The idea is to study the connected polygon
\[
A = s_{i_1}^{\eps_1}(P)\cup s_{i_1}^{\eps_1} s_{i_2}^{\eps_2}(P) \cup \cdots \cup  s_{i_1}^{\eps_1} s_{i_2}^{\eps_2}\cdots s_{i_n}^{\eps_n}(P),
\]
which consists of $n$ translated copies of $P$ that spell out the word $w$. $w$ maps a side $a$ of $A$ to a side $b$ of $A$. Because hyperbolic fixed points of $\Gamma$ cannot be parabolic fixed points of $\Gamma$ (by discreteness), these sides $a$ and $b$ have distinct endpoints.
Because both $a$ and $b$ separate $\HH^2$, the axis $\alpha_w$ of $w$ runs through both these sides and because $A$ is convex, the part of $\alpha_w$ between $a$ and $b$ is entirely contained in $A$. In particular, we have
\[
\ell(\alpha_w\cap A) = \tau(w) = \tau(\gamma).
\]
So, given that $\tau_S(\gamma) =n $, we need to compare $\ell(\alpha_w\cap A)$ to $n$.

To this end, we set
\[
R \coloneqq \min\st{\dist(a,s_i b)}{\begin{array}{c}i=1,\ldots,k,\;a \text{ and }b \text{ sides of }P \text{ such that } \\ a\text{ and }s_i b \text{ are not asymptotric}\end{array}}>0.
\]
Here we call two distinct geodesics in $\HH^2$ asymptotic if they share an endpoint in $\partial_\infty\HH^2$.

Moreover, we let $\iota(w)$ denote the maximal number of disjoint subsegments of $\alpha_w\cap A$ that connect two non-asymptotic sides of two consecutive copies of $P$. To define this properly, for two integers $\leq l < m \leq k$, we will write
\[
A^{[l,m]} \coloneqq s_ {i_l}^{\eps_l}\cdots s_{i_1}^{\eps_1} (P) \cup  s_ {i_{l+1}}^{\eps_{l+1}}s_ {i_l}^{\eps_l}\cdots s_1^{\eps_1} (P) \cup \cdots \cup s_{i_m}^{\eps_m} \cdots s_ {i_{l+1}}^{\eps_{l+1}}s_ {i_l}^{\eps_l}\cdots s_1^{\eps_1} (P)
\]
for a sub-polygon of $A$ consisting of multiple consecutive copies of $P$, and
\[
\alpha_w^{[l,m]} \coloneqq \alpha_w \cap A^{[l,m]} 
\]
for the intersection of the axis $\alpha_w$ with multiple consecutive copies of $P$ in $A$ and define
\[
\iota(w) \coloneqq \max\st{m<n}{\begin{array}{c} j_1+1 < j_2, \ j_2+1<j_3,\ \ldots , \ j_{m-1}+1<j_m \\
\alpha_w^{[j_1,j_1+1]}, \ \alpha_w^{[j_2,j_2+1]}, \ \ldots, \ \alpha_w^{[j_m,j_m+1]} \text{ all connect two} \\ \text{non-asymptotic geodesics}
\end{array} }.
\]
By construction, we have 
\begin{equation}\label{eq_inversions}
\tau(\gamma)=\ell(\alpha_w\cap A) \geq \iota(w) \cdot R.
\end{equation}
Now suppose that $\tau(\gamma) \leq \log(\tau_S(\gamma))=\log(n)$ (which we may, because if not, then there's nothing to prove). From \eqref{eq_inversions}, we obtain that
\[
\iota(w) \leq \log(n)/R.
\]
This means that $w$ contains a subword $w' = s_{i_l}^{\eps_l}s_{i_{l+1}}^{\eps_{l+1}}\cdots s_{i_m}^{\eps_m}$ with
\[
m-l+1 \geq \frac{n-2\iota(w) }{\iota(w)} \geq
 \frac{R\cdot n}{\log(n)}-2
\]
letters such that the corresponding arc $\alpha_w^{[l,m]}$ connects two asymptotic sides of 
$A^{[l,m]}$. We will assume that $[l,m]$ is maximal with respect to this last property. Because $w$ is hyperbolic, $A^{[l,m]}$ is not all of $A$. So inside of $A$, $A^{[l,m]}$ is adjacent to at least one more copy of $P$. Without loss of generality, we assume this copy is $(m+1)$-st copy. We claim that
\[
\ell(\alpha^{[l,m+1]}) \geq \cosh^{-1} \mathopen{}\left( c_P\cdot (m-l+1) \right)\mathclose{} \geq c_P ' \log(n),
\]
where $c_P,c_P'>0$ are constants that depend on $P$ alone. This can be proved by a direct computation in the hyperbolic plane. Indeed, using the upper half plane model for $\HH^2$, we can uniformize so that the sides (of translates of $P$) that $\alpha_w^{[l,m]}$ intersects, are vertical geodesics in $\HH^2$ and the leftmost side of $s_{i_l}^{\eps_l}\cdots s_{i_1}^{\eps_1}(P)$ is the imaginary axis. Now choose a horosegment $\eta$ in $s_{i_l}^{\eps_l}\cdots s_{i_1}^{\eps_1}(P)$ around the cusp at $\infty$ that does not intersect any of the other sides of $s_{i_l}^{\eps_l}\cdots s_{i_1}^{\eps_1}(P)$. The length $\ell(\alpha_w^{[l,m+1]})$ can be bounded from below by the distance from the imaginary axis to the rightmost translate $\eta'$ of $\eta$ (see Figure~\ref{pic_polygonpath}). This can be computed in terms of $m-l+1$, the Euclidean width of $P$ and the Euclidean height of $\eta$ alone, which leads to the bound above and thus to the lemma.
\begin{figure}
\begin{center}
\begin{overpic}[scale=0.55]{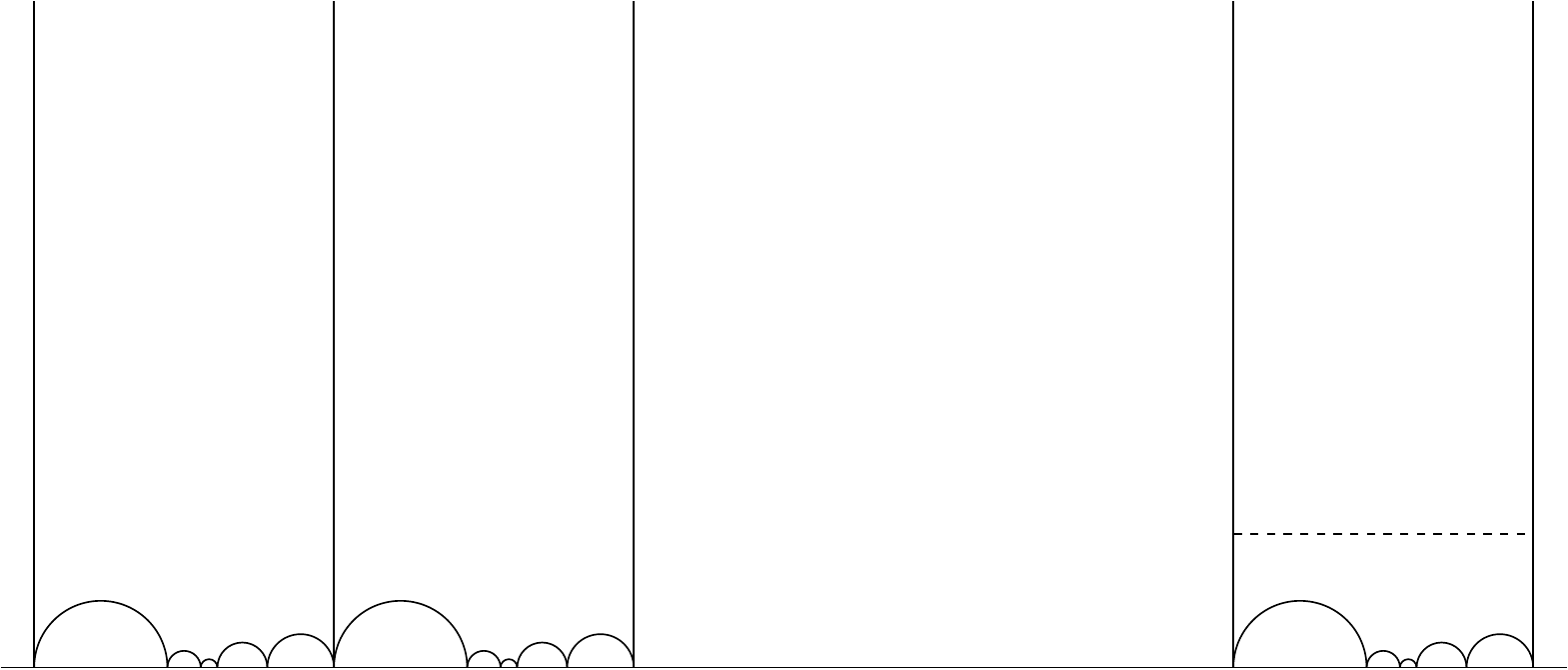}
\put(4.5,20){$s_{i_l}^{\eps_l}\cdots s_{i_1}^{\eps_1}(P)$}
\put(22.4,20){$s_{i_{l+1}}^{\eps_{l+1}}\cdots s_{i_1}^{\eps_1}(P)$}
\put(79.2,20){$s_{i_{m+1}}^{\eps_{m+1}}\cdots s_{i_1}^{\eps_1}(P)$}
\put(48,20){$\cdots$}
\put(58,20){$\cdots$}
\put(68,20){$\cdots$}
\put(81,10){$\eta'$}
\end{overpic}
\caption{$A^{[l,m+1]}$: the union of polygons that the segment $\alpha^{[l,m+1]}$ runs through.}\label{pic_polygonpath}
\end{center}
\end{figure}
\end{proof}

With the lemma above and the bounds from Gamburd--Hoory--Shahshahani--Shalev--Vir\'ag \cite{GHSSV} we can now prove Theorem~\ref{thm_randcovers_noncompact}:

\begin{proof}[Proof of Theorem~\ref{thm_randcovers_noncompact}]
Let $S$ be a free generating set of $\Gamma$ and let $\varphi\in\Hom(\Gamma,G)$ denote our random homomorphism, by Lemma~\ref{lem_distortion} we get that
\[
\PP\left(\gamma \in \ker(\varphi)\right) \leq \max \st{ 
\PP\left(\beta \in \ker(\varphi)\right)
}
{
\tau_S(\beta) \leq \exp( K\cdot \tau(\gamma))
}.
\]
where $K>0$ depends on $X$ alone. By \cite[Equation~(7)]{GHSSV}, we have
\[
\PP\left(\beta \in \ker(\varphi)\right) \leq \frac{a\cdot \tau_S(\beta)^b}{q},
\]
where $a,b>0$ depend on $\mathbf{G}$ only. Combining this with the union bound \eqref{eq_unionbound} leads to the theorem.
\end{proof}

\subsection{Expansion}

Let us briefly comment on how to derive Corollary~\ref{cor_spectral_gap}:

\begin{correp}{\ref*{cor_spectral_gap}}
Let $X$ be a closed orientable hyperbolic surface, then there exist constants $C,\eps>0$ such that, with high probability as $p\to\infty$,
\[
\lambda_1(X_{\SL_2(\FF_p)}) > \eps \quad \text{and} \quad \mathrm{diameter}(X_{\SL_2(\FF_p)})\leq C\cdot \log(\area(X_{\SL_2(\FF_p)})).\]
Moreover, if $\det(\Delta_p)$ denotes the  $\zeta$-regularized determinant of the Laplacian $\Delta_p$ of $X_{\SL_2(\FF_p)}$, then
\[
\frac{\log(\det(\Delta_p))}{\area(X_{\SL_2(\FF_p)})} \longrightarrow E \quad \text{in probability,}
\]
where $E=(4\zeta'(-1)-1/2+\log(2\pi))/4\pi = 0.0538\ldots$.
\end{correp}

\begin{proof}[Proof sketch] 
A minor modification of our proof also shows that the random Cayley graph $\mathrm{Cay}(\SL(2,\FF),\varphi(S))$, where $S$ is a finite generating set of the fundamental group $\pi_1(X)$, and $\varphi\in\Hom(\pi_1(X),\SL(2,\FF))$ is a uniform random element, has logarithmic girth with high probability. Indeed, we just need to replace hyperbolic length with word length in $\pi_1(X)$ with respect to $S$ in the above.

By the restuls of Bourgain--Gamburd \cite{BourgainGamburd}, this implies that these Cayley graphs have a uniform spectral gap. The Brooks--Burger transfer method \cite{Brooks_transfer,Burger} then yields that the corresponding covers have a uniform spectral gap. The diameter bound in turn follows from Magee's bound \cite{Magee_letter}. Finally, given that, with high probability as $p\to\infty$, the systoles of the surfaces $X_{\SL_2(\FF_p)}$ tend to infinity and their spectral gaps are uniformly bounded away from $0$, they satisfy the hypotheses of \cite[Theorem 3.1]{Naud_determinants}, which implies the result on the determinant.
\end{proof}

\subsection{Counting}\label{sec_counting_covers}

Next up, we prove our counting result for regular covers. For simplicity, we restrict to $\SL(m,\FF)$, an example of a sequence of quasisimple groups coming from a simply connected algebraic group. We also restrict to non-arithmetic base surfaces. Our reason for doing this is that the total number of covers grows too slowly for us to apply the argument based on Lubotzky's count of the number of congruence subgroups of an arithmetic group.

\begin{thmrep}{\ref*{thm:many}~(b)}
    Let $X$ be a complete orientable non-arithmetic hyperbolic surface of genus $g$ with $n$ cusps. Then there exist constants $a_X,a_X'>0$ such that $X$ has at least
    \[
        a_X \cdot \frac{\card{\SL(m,\FF)}^{2g+n-3}}{\log(\card{\SL(m,\FF))})}
        \geq
        a_X' \cdot \frac{\card{\FF}^{(m^2-1)\cdot (2g+n-3)}}{(m^2-1)\log(\card{\FF})} 
    \]
pairwise non-isometric $\SL(m,\FF)$-covers whose systoles are at least
\[
(c_{m,X} + \lo(1)) \cdot \log(\card{\SL(m,\FF))}).
\]
for some constant $c_{m,X}>0$ depending on $m$ and $X$ only.
\end{thmrep}

\begin{proof}
    As before, we write $\Gamma \coloneqq \pi_1(X)$ and $G \coloneqq \SL(m, \FF)$. We apply the same strategy as in the proof of Theorem~\ref{thm:many}~(a): we first count the number of distinct subgroups of $\Gamma$ we produce and then divide by an upper bound for the number such subgroups that are conjugate to a given one in $\PSL(2,\RR)$.
    
    Suppose that $\varphi,\psi \in\Hom(\Gamma,G)$ are both surjective and satisfy $\ker(\varphi) = \ker(\psi)$. It then follows from the first isomorphism theorem that there is an automorphism $\sigma \in \Aut(G)$ such that $\psi = \sigma \circ \varphi$. Kantor--Lubotzky \cite{KantorLubotzky} and Liebeck--Shalev \cite[Theorem~1.6]{LiebeckShalev2} proved that $\varphi$ is surjective with high probability, which means that the number of distinct subgroups we produce is
    \[
    (1+\lo(1))\cdot \frac{\card{\Hom(\Gamma,G)}}{\card{\Aut(G)}} \quad \text{as } \card{\FF} \to \infty
    \]
	 The group $\Aut(\SL(m, \FF))\simeq \Aut(\PSL(m, \FF))$ (see \cite{Dieudonne}) and hence its order satisfies 
	 \[
	 \card{\Aut(G)} \leq C\cdot \card{G}\cdot \log(\card{G})
	 \]
    for some universal constant $C>0$ (see for instance \cite[p.\ xvi]{Atlas}). So, using \cite[Corollary~1.3]{LiebeckShalev2} in the closed case, we obtain 
    \[
    (1+\lo(1))\cdot \frac{\card{G}^{2g+n-2}}{C\cdot \log(\card{G})} \quad \text{as } \card{\FF} \to \infty
    \]
    pairwise distinct normal subgroups.
    
    In order to bound how many of our subgroups yield the same isometry class of surfaces, we again use that if $h\in\PSL(2,\RR)$ is such that $H_1 = h H_2 h^{-1}$ for two subgroups $H_1, H_2<\Gamma$ of finite index, then $h\in\mathrm{Comm}_{\PSL(2,\RR)}(\Gamma)$. Because we assume that $\Gamma$ is non-arithmetic, it follows from Margulis's commensurator criterion that $\Gamma<\mathrm{Comm}_{\PSL(2,\RR)}(\Gamma)$ is of finite index. This implies that, given $H<\Gamma$ of index $\card{G}$, 
    \[
    \card{\st{H'<\Gamma}{[\Gamma:H'] = \card{G} \text{ and }\exists h\in\PSL(2,\RR) \text{ s.t. } h Hh^{-1} = H'}} \leq C'_\Gamma \cdot \card{G}.
    \]
    Dividing the number of subgroups by this yields the result.
\end{proof}

\subsection{Symmetric groups}

The bounds we are able to obtain for symmetric groups are significantly weaker than those for finite groups of Lie type, and also than their analogue in the case of graphs \cite{Eberhard}. Let us start with the fact that the systole of a random $\sym_n$-cover tends to infinity. This will be a consequence of a result by Nica \cite[Corollary 1.3]{Nica} in the non-compact case and Magee--Puder \cite[Theorem 1.2]{MageePuder} in the closed case:
\begin{thm}[Magee--Puder, Nica]\label{thm_MNP}
Let $X$ be a hyperbolic surface and $\gamma \in \pi_1(X) \smallsetminus\{e\}$ then
\[
\EE(\mathrm{fix}_n(\gamma)) = \bO(1),\quad \text{as }n\to\infty.
\]
Here $\mathrm{fix}_n(\gamma)$ denote the number of fixed points of $\varphi(\gamma)$ acting on $\{1,\ldots,n\}$, where $\varphi\in\Hom(\pi_1(X),\sym_n)$ is a random homomorphism.
\end{thm}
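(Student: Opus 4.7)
By symmetry of the $\sym_n$-action on $\{1,\ldots,n\}$, conjugation invariance of the uniform distribution on $\Hom(\pi_1(X),\sym_n)$ yields
\[
\EE[\mathrm{fix}_n(\gamma)] = n \cdot \PP[\varphi(\gamma)(1) = 1],
\]
so the task reduces to showing $\PP[\varphi(\gamma)(1) = 1] = \bO(1/n)$. The two cases (non-compact and closed $X$) require different approaches, because the structure of $\pi_1(X)$ changes from free to surface.

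In the non-compact case, $\pi_1(X)$ is free, say of rank $r$ with generators $s_1,\ldots,s_r$, and a uniform $\varphi$ corresponds to $r$ independent uniform permutations $\pi_1,\ldots,\pi_r \in \sym_n$. Writing $\gamma$ as a reduced word $s_{i_1}^{\epsilon_1}\cdots s_{i_k}^{\epsilon_k}$, the plan is to expand
\[
\PP[\varphi(\gamma)(1) = 1] = \sum_{\substack{(v_0,\ldots,v_k) \\ v_0 = v_k = 1}} \PP[\pi_{i_j}^{\epsilon_j}(v_{j-1}) = v_j \text{ for all } j]
\]
and analyze each summand by grouping the constraints imposed on each permutation $\pi_i$ separately. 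A ``generic'' trajectory, where no two constraints involve the same pair of points for the same letter, contributes a probability of order $1/n^k$, and there are of order $n^{k-1}$ such trajectories (since $v_0$ and $v_k$ are fixed), yielding a leading $\bO(1/n)$. Non-generic trajectories, in which collisions $v_i = v_j$ force additional algebraic constraints, are rarer by a factor of at least $1/n$ and contribute lower-order corrections. The hypothesis $\gamma \neq e$ is essential: for $\gamma = e$ the unique trajectory $(1)$ contributes probability $1$, so $\EE[\mathrm{fix}_n(e)] = n$.

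In the closed case, where $\pi_1(X) = \langle a_1, b_1, \ldots, a_g, b_g \mid \prod_i [a_i, b_i]\rangle$, I would instead invoke the Frobenius--Mednykh formula, which expresses averages of class functions on $\sym_n$ over $\Hom(\pi_1(X),\sym_n)$ as sums over the irreducible characters of $\sym_n$ weighted by $\chi(1)^{2-2g}$. Taking $f = \mathrm{fix} = \ind + \chi_{(n-1,1)}$ (the decomposition of the permutation character into trivial plus standard) then rewrites $\EE[\mathrm{fix}_n(\gamma)]$ as a ratio of character sums over $\mathrm{Irr}(\sym_n)$ in which $\gamma$ enters only through values of normalized characters $\chi(\varphi(\gamma))/\chi(1)$ for $\varphi$ varying. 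The desired $\bO(1)$ bound should then follow from uniform decay of $|\chi(\gamma)|/\chi(1)$ on $\sym_n$ (à la Larsen--Shalev) combined with convergence of the Witten zeta function $\sum_\chi \chi(1)^{2-2g}$ for $g \geq 2$.

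The main obstacle is the closed case: one must translate the abstract data of a word $\gamma$ in a one-relator group into an input compatible with the Frobenius formula, and then establish sharp enough normalized-character bounds for $\sym_n$ to beat the denominator uniformly in $n$. This requires genuine representation-theoretic input and is the core of the Magee--Puder argument. The non-compact case, by contrast, is essentially combinatorial bookkeeping about reduced words in the style of Nica, with the only subtlety being the case-by-case analysis of collision patterns.
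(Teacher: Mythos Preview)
The paper does not prove this statement; it is quoted as a black box from \cite{Nica} (non-compact case) and \cite{MageePuder} (closed case). So the comparison is between your sketch and those external proofs.

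Your non-compact sketch is essentially Nica's argument: expand over trajectories, separate generic from colliding paths, and use that $\gamma\neq e$ kills the constant term. That part is fine.

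The closed-case sketch, however, has a genuine gap. The Frobenius--Mednykh formula computes $|\Hom(\Gamma,\sym_n)|$ and, more generally, weighted counts where the weight is a class function evaluated at the image of a \emph{boundary} curve. It does not, for an arbitrary $\gamma$ in the surface group, produce a closed expression for $\EE[\chi(\varphi(\gamma))]$ in terms of characters of $\sym_n$. Your sentence ``$\gamma$ enters only through values of normalized characters $\chi(\varphi(\gamma))/\chi(1)$ for $\varphi$ varying'' is precisely the quantity you are trying to control, so invoking it is circular; and writing ``decay of $|\chi(\gamma)|/\chi(1)$'' is a category error, since $\gamma\notin\sym_n$. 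Larsen--Shalev bounds apply to fixed permutations, not to the random variable $\varphi(\gamma)$ whose distribution is unknown. Even if $\gamma$ happens to be a simple non-separating curve, so that cutting along it yields a genuine Frobenius-type formula, the general case (powers, separating curves, non-simple curves) does not reduce to this.

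Magee--Puder's actual proof does not proceed via Larsen--Shalev. It is a topological-combinatorial argument built on their notion of \emph{core surfaces}: they resolve $\EE[\mathrm{fix}_n(\gamma)]$ into contributions indexed by certain covers/immersed subsurfaces carrying $\gamma$, and bound each contribution by analysing Euler characteristics. The representation theory of $\sym_n$ enters through Weingarten-type calculus rather than through character ratio bounds. Your outline would need to be replaced by something along these lines to go through.
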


This leads to the following:
\begin{cor}\label{cor_sys_cover_closed}
We have
\[
\sys(X_{\sym_n}) \to \infty \quad \text{in probability as }n\to\infty.
\]
\end{cor}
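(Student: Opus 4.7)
The plan is to apply the union bound \eqref{eq_unionbound} to a fixed threshold $R$, and show that each term in the sum is $O(1/n)$, using a Markov-type argument and Theorem~\ref{thm_MNP}.

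The key observation is that the event $\{\gamma \in \ker(\varphi)\}$ coincides with the event $\{\varphi(\gamma) = \mathrm{id}\}$, and in particular forces $\mathrm{fix}_n(\gamma) = n$. Hence, by Markov's inequality,
\[
\PP\big(\gamma \in \ker(\varphi)\big)
\;=\;
\PP\big(\mathrm{fix}_n(\gamma) \geq n\big)
\;\leq\;
\frac{\EE(\mathrm{fix}_n(\gamma))}{n}.
\]
Theorem~\ref{thm_MNP} then gives $\PP(\gamma \in \ker(\varphi)) = \bO(1/n)$ for each fixed $\gamma \neq e$, where the implied constant is allowed to depend on $\gamma$.

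Next I would fix an arbitrary $R > 0$ and note that the set of conjugacy classes $\gamma^\Gamma$ in $\pi_1(X)$ with translation length $\tau(\gamma) < R$ is finite (one could invoke Huber's asymptotic $\sim e^R/R$, but all that matters here is finiteness). Let $N_R$ be its cardinality, and let $C_R$ be the maximum of the implicit constants in the bounds $\EE(\mathrm{fix}_n(\gamma)) \leq C_R$ over this finite family of $\gamma$'s. Plugging into the union bound \eqref{eq_unionbound},
\[
\PP\big(\sys(X_{\sym_n}) < R\big)
\;\leq\;
\sum_{\gamma^\Gamma :\, 0 < \tau(\gamma) < R} \PP\big(\gamma \in \ker(\varphi)\big)
\;\leq\;
N_R \cdot \frac{C_R}{n}
\;\longrightarrow\; 0
\]
as $n \to \infty$. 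Since $R$ was arbitrary, this is the definition of $\sys(X_{\sym_n}) \to \infty$ in probability.

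There is no real obstacle: the only mildly delicate point is observing that $\gamma \in \ker(\varphi)$ is a much stronger event than $\mathrm{fix}_n(\gamma) \geq 1$, and exploiting this via the ratio $\EE(\mathrm{fix}_n(\gamma))/n$ rather than Markov at level one, so that the $O(1)$ bound of Theorem~\ref{thm_MNP} becomes an $O(1/n)$ bound. Note that this argument does not give a rate of growth for $\sys(X_{\sym_n})$ — to turn $R$ into a function of $n$ one would need effective control of $C_R$, in particular of $\EE(\mathrm{fix}_n(\gamma))$, uniformly over $\gamma$ with $\tau(\gamma) < R$, which is not provided by Theorem~\ref{thm_MNP} as stated.
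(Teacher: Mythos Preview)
Your proof is correct and follows essentially the same approach as the paper: combine the union bound \eqref{eq_unionbound} with the Markov-type inequality $\PP(\gamma\in\ker(\varphi))=\PP(\mathrm{fix}_n(\gamma)=n)\leq \EE(\mathrm{fix}_n(\gamma))/n$ and Theorem~\ref{thm_MNP}, using that only finitely many conjugacy classes have $\tau(\gamma)<R$. The paper phrases the finiteness as ``discreteness of the length spectrum'' rather than invoking Huber, but the argument is otherwise identical.
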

\begin{proof} The idea is to combine Theorem~\ref{thm_MNP} with \eqref{eq_unionbound}. By discreteness of the length spectrum of $X$, the sum on the right hand side of \eqref{eq_unionbound} is finite for any fixed $R>0$. Moreover
\[
\PP\Big(\gamma \in \ker(\varphi) \Big) = \PP\Big(\mathrm{fix}_n(\gamma) =n \Big) \leq \frac{\EE(\mathrm{fix}_n(\gamma))}{n},
\]
which tends to $0$ as $n\to\infty$ by the theorem above.
\end{proof}

\begin{rem}\label{rem_symmetric_group}
\begin{itemize}
\item[(a)]
In the closed case, the above can be made effective using a version of \cite[Theorem~1.11]{MageeNaudPuder} that also covers non-primitive elements. Indeed, by a small modification of the proof of that theorem, one can prove that for all primitive $\gamma\in\Gamma$ and all $k\in\NN^*$ such that $\ell(\gamma^k) \leq C\cdot \log(n)$:
\[
\EE(\mathrm{fix}_n(\gamma^k)) = \mathrm{div}(k) + \bO \mathopen{}\left(\frac{\log(n)^A \, \mathrm{div}(k)}{n} \right)\mathclose{}
\]
as $n\to\infty$ \cite{MageeNaudPuder_personal}, where $\mathrm{div}(k)$ denotes the number of divisors of $k$, which in turn is $\lo(k^\eps)$ for all $\eps>0$. Using this in the argument above implies that the systole is at least $(1+\lo(1))\cdot \log\log(\area(X_{\sym_n}))$ with high probability. When one compares with the results and conjectures on random Cayley graphs, it seems unlikely that this is sharp.

\item[(b)] Similar bounds of the order $\log\log(\area)$ can be proved for random $G$-covers of a non-compact surface of finite area by combining
Lemma~\ref{lem_distortion} with \cite[Lemma~2.2]{Eberhard} for symmetric groups and with \cite[Theorem~4]{LiebeckShalev3} for classical groups of growing rank over a fixed finite field.
\end{itemize}
\end{rem}

\section{Conflict of interest statement}

On behalf of all authors, the corresponding author states that there is no conflict of interest.

\section{Data availability statement}

The data for the plot in Figure \ref{fig:examples} is available as an ancillary file at \url{https://arxiv.org/abs/2312.03504}.

\bibliography{bib}
\bibliographystyle{alpha}

\end{document}